\documentclass{amsart}

\usepackage[numbers]{natbib}
 
\usepackage{amsmath}
\usepackage{amsthm, thmtools}
\usepackage{cleveref}
\usepackage[geometry]{ifsym}
\usepackage{amsfonts}

\usepackage{algorithm}
\usepackage{algorithmic}
\usepackage{tabularx}
\usepackage{comment}
\usepackage{caption}

\usepackage{makecell}
\usepackage{url}

\usepackage{mathtools}
\usepackage[utf8]{inputenc}
\usepackage[T1]{fontenc}

\usepackage{graphicx}

\usepackage[caption=false,font=footnotesize]{subfig}
\usepackage{fullpage}
\usepackage{wrapfig}
\usepackage{epsfig}

\usepackage{array}

\newtheorem{thm}{Theorem}

\newtheorem{defn}{Definition}

\newtheorem{proposition}{Proposition}

\newtheorem{remark}{Remark}

\setlength{\parindent}{0.5cm}

 
\DeclareMathOperator*{\argmin}{\arg\!\min}

\DeclareMathOperator*{\vecmat}{\text{vec}}

\newcommand{\tensor}[1]{\mathbf{#1}}

\DeclareMathOperator*{\paramspaceclean}{\mathcal{M}}
\newcommand{\paramspacecleanT}{\mathcal{T}_x\mathcal{M}}

\DeclareMathOperator*{\htuckspaceclean}{\mathcal{H}}
\DeclareMathOperator*{\troot}{t_{\text{root}}}
\DeclareMathOperator*{\liegroup}{\mathcal{G}}
\DeclareMathOperator*{\fancyA}{\mathcal{A}}
\DeclareMathOperator*{\quotspace}{\mathcal{M}/\mathcal{G}}
\newcommand{\vertspace}{\mathcal{V}_x \paramspaceclean}
\newcommand{\horizspace}{\mathcal{H}_x \paramspaceclean}
\DeclareMathOperator*{\tr}{\text{tr}}
\DeclareMathOperator*{\qf}{\text{\rm{qf}}}

\newcommand{\fullspace}{\mathbb{R}^{n_1 \times n_2 \times ... \times n_d}}
\DeclareMathOperator*{\itrvar}{i}

\begin{document}

\title{ Optimization on the Hierarchical Tucker manifold - applications to tensor completion }
\author{Curt Da Silva$^{1}$ and Felix J. Herrmann$^{2}$\\$^1$ Department of Mathematics, University of British Columbia\\$^2$ Department of Earth and Ocean Sciences, University of British Columbia}
\begin{abstract}
  In this work, we develop an optimization framework for problems
  whose solutions are well-approximated by \emph{Hierarchical Tucker}
  (HT) tensors, an efficient structured tensor format based on
  recursive subspace factorizations. By exploiting the smooth manifold
  structure of these tensors, we construct standard optimization
  algorithms such as Steepest Descent and Conjugate Gradient for
  completing tensors from missing entries. Our algorithmic framework
  is fast and scalable to large problem sizes as we do not require
  SVDs on the ambient tensor space, as required by other
  methods. Moreover, we exploit the structure of the Gramian matrices
  associated with the HT format to regularize our problem, reducing overfitting for high
  subsampling ratios. We also find that the organization of the tensor
  can have a major impact on completion from realistic seismic
  acquisition geometries. These samplings are far from idealized
  randomized samplings that are usually considered in the literature but are realizable in practical scenarios. Using these
  algorithms, we successfully interpolate large-scale seismic data sets and demonstrate the competitive computational scaling of our algorithms as the problem sizes grow.
\end{abstract}
\maketitle
\section{Introduction}
The matrix completion problem is concerned with interpolating a $m
\times n$ matrix from a subset of its entries. The amount of recent
successes in developing solution techniques to this problem is a
result of assuming a \emph{low-rank} model on the 2-D signal of
interest and by considering subsampling schemes that \emph{increase}
the rank of the underlying matrix \cite{candes2009exact},
\cite{svtmc}, \cite{candes2010power}. The original signal is recovered
by promoting low-rank structures subject to data constraints.

Using a similar approach, we consider the problem of interpolating a
$d-$dimensional tensor from samples of its entries. That is, we aim to
solve,
\begin{align}
\label{eq:tensorcompletion}
\min_{\tensor{X} \in \htuckspaceclean} \frac{1}{2}\|P_{\Omega} \tensor{X} - b\|^2_2,
\end{align}
where $P_{\Omega}$ is a linear operator $P_{\Omega} : \fullspace \to
\mathbb{R}^{m}$, $b \in \mathbb{R}^{m}$ is our subsampled data satisfying $b = P_{\Omega} \tensor{X}^*$ for some ``solution'' tensor $\tensor{X}^*$ and $\htuckspaceclean$ is a \emph{specific} class of
low-rank tensors to be specified later. Under the assumption that $\tensor{X}^* $ is well approximated by an element in $\htuckspaceclean$, our goal is to recover $\tensor{X}^*$ by solving (\ref{eq:tensorcompletion}). For concreteness, we concern ourselves with the
case when $P_{\Omega}$ is a restriction operator, i.e.,
\begin{align*}
P_{\Omega} \tensor{X} = \tensor{X}_{i_1, i_2, \dots, i_d} \quad \text{ if } (i_1, i_2, \dots, i_d) \in \Omega,
\end{align*}
and $\Omega \subset [n_1] \times [n_2] \times \dots \times [ n_d]$ is
the so-called \emph{sampling set}, where $[n] = \{1, \dots, n\}$. In
the above equation, we suppose that $|\Omega| = m \ll n_1 n_2 \dots
n_d$, so that $P_{\Omega}$ is a subsampling operator.

Unlike the matrix case, there is no unique notion of rank for tensors,
as we shall see in \Cref{sec:prevwork}, and there are multiple tensor
formats that generalize a particular notion of \emph{separability}
from the matrix case---i.e, there is no unique extension of the SVD to 
tensors. Although each tensor format can lead to compressible
representations of their respective class of low-rank signals, the
truncation of a general signal to one of these formats requires access
to the \emph{fully} sampled tensor $\tensor{X}$ (or at the very least \emph{query}-based access to the tensor \cite{blackboxapprox-htuck}) in order to achieve reasonable
accuracy, owing to the use of truncated SVDs acting on various 
\emph{matricizations} of the tensor. As in matrix completion,
randomized missing entries change the behavior of the singular values
and vectors of these matricizations and hence of the final
approximation. Moreover, when the tensor of interest is actually a
discretized continuous signal, there can be a number of constraints,
physical or otherwise, that limit our ability to ideally sample it. For instance, in the seismic case, the tensor of interest is a
multi-dimensional wavefield in the earth's subsurface sampled at an
array of receivers located at the surface. In real-world seismic
experiments, budgetary constraints or environmental obstructions can
limit both the total amount of time available for data acquisition as
well as the number and placement of active sources and
receivers. Since seismic and other methods rely on having fully sampled data for drawing accurate inferences,
tensor completion is an important technique for a variety of scientific fields that acquire multidimensional data.

In this work, we consider the class of Hierarchical Tucker
(abbreviated HT) tensors, introduced in \cite{htuckinitial, htuckersvd}, as
our low-rank tensors of interest. The set of all such tensors is a
smooth, embedded \emph{submanifold} of $\fullspace$, first studied in
\cite{htuckgeom}, which we equip with a Riemannian metric. Using this
Riemannian structure, we can construct optimization algorithms in
order to solve (\ref{eq:tensorcompletion}) for $d$-dimensional tensors. We
will also study some of the effects of higher dimensional sampling and
extend ideas from compressive sensing and matrix completion to the HT
tensor case for our specific seismic examples.

%
\subsection{Previous Work}
\label{sec:prevwork}
To provide the reader with some context on tensor representations, let
us briefly detail some of the available structured tensor formats,
including tensor completion results, here (see
\cite{prevwork:structuredtensoroverview} and
\cite{prevwork:tensordecompositions} for a very comprehensive
overview). Here we let $N=\max_{i=1\cdots d}n_i$ be the maximum individual dimension size,  
$N^d := \prod_{i=1}^{d} n_i$ denote the dimension of the ambient space $\fullspace$, and, for each tensor format discussed, $K$ is the maximum of all of the rank parameters associated to that format.

The so-called Candecomp/Parafac (CP) decomposition is a very
straightforward application of the separation of variables
technique. Very much like the SVD of a matrix, one stipulates
that, for a function $f$ living on a tensor product space, one can
write
\begin{align*}
  f(x_1, x_2, \dots, x_d) \approx \sum_{i=1}^{K} f_i^{(1)}(x_1) f_i^{(2)}(x_2)\dots f_i^{(d)}(x_d).
\end{align*}
Thus its discretization can be written as
\begin{align*}
  \tensor{f} \approx \sum_{i=1}^{K} f_i^{(1)} \otimes f_i^{(2)} \otimes \dots \otimes f_i^{(d)}
\end{align*}
where $\otimes$ is the Kronecker product and $f_i^{(j)}\in
\mathbb{R}^{n_j}$ is the discretization of the one dimensional
function $f_i^{(j)}(x_j)$. In addition to its intuitive construction, the
CP decomposition of rank $K$ only requires $dNK$ parameters versus the
$N^d$ of the full tensor and tensor-tensor operations can be performed
efficiently on the underlying factors rather than the full tensors
themselves (see \cite{prevwork:tensorToolbox} for a comprehensive set
of MATLAB tools).

Unfortunately, despite the parsimoniousness of the CP construction,
the approximation of an arbitrary (full) tensor by CP tensors has both
theoretical and numerical difficulties. In particular, the set of all
CP tensors of rank at most $K$ is not closed, and thus the notion of a
best $K-$rank approximation is difficult to compute in many cases
\cite{prevwork:cp-approx-illposed}. Despite this shortcoming,
various authors have proposed iterative and non-iterative algorithms
in the CP format for approximating full tensors \cite{prevwork:tensordecompositions} as well as
interpolating tensors with missing data, such as the Alternating Least
Squares approach (a block Gauss-Seidel type method) proposed alongside
the CP format in \cite{prevwork:originalCP} and
\cite{prevwork:originalCP2}, with convergence analysis in
\cite{prevwork:cp-als-convergence}, and a nonlinear least-squares
optimization scheme in \cite{prevwork:TTB_CPOPT}. 

The CP format is a specific case of the more general \emph{Tucker
  format}, which aims to write a tensor $\mathbf{f}$ as a multilinear
product
\begin{align*}
\tensor{f} \approx U_1 \times_1 U_2 \times_2 \dots U_d \times_d \tensor{C}
\end{align*}
where $\tensor{C} \in \mathbb{R}^{k_1 \times k_2 \times
  ... \times k_d}$ is the so-called \emph{core tensor} and the
matrices $U_j \in \mathbb{R}^{n_j \times k_j}$, $j = 1, \dots, d$ are
the \emph{factors} of the decomposition. Here we use the notation of
the multilinear product, that is, $U_i \times_i \tensor{C}$ indicates that $\tensor{C}$
is multiplied by $U_i$ in dimension $i$, e.g., see \cite{prevwork:cp-approx-illposed,prevwork:tensordecompositions}. We will elaborate on this construction in \Cref{sec:multilinearproduct}. The
CP format follows from this formulation when the core tensor is
\emph{diagonal}, i.e., $\tensor{C}_{i_1,i_2,i_3,\dots, i_d} = \tensor{C}_{i_1,i_1,\dots,
  i_1} \delta_{i_1,i_2,\dots,i_d}$, where $\delta_{i_1,i_2,\dots,i_d} = 1$ when $i_1 = i_2 = \dots = i_d$ and $0$ otherwise. 

The Tucker format enjoys many benefits in terms of approximation
properties over its CP counterpart. Namely, the set of all Tucker
tensors of at most multilinear rank $\mathbf{k} = (k_1, k_2, \dots, k_d)$ is closed
and as a result every tensor $\tensor{f}$ has a best at most multilinear rank-$\mathbf{k}$ Tucker
approximation. A near-optimal approximation can be computed
efficiently by means of the Higher Order SVD
\cite{prevwork:multilinearsvd}. For the tensor completion problem, the
authors in \cite{prevwork:tuckercompletion} consider the problem of
recovering a Tucker tensor with missing entries using the
Douglas-Rachford splitting technique, which decouples interpolation
and regularization by nuclear norm penalization of different
matricizations of the tensor into subproblems that are then solved via
a particular proximal mapping. An application of this approach to
seismic data is detailed in \cite{prevwork:kreimer2012tensor} for the interpolation problem and \cite{prevwork:kreimer2012tensordnoise} for denoising. Depending on the size and ranks of the tensor to be recovered, there are
theoretical and numerical indications that this approach is no better
than penalizing the nuclear norm in a single matricization (see
\cite{oymak2012simultaneously} for a theoretical justification in the Gaussian measurement case, as well as \cite{tensorvsmatrix} for an experimental demonstration of this effect). Some preliminary results on theoretical guarantees for recovering low-rank Tucker tensors from subsampled measurements are given in \cite{huang2014provable} for pointwise measurements and a suitable, tensor-based incoherence condition and \cite{mu2013square}, which considers a nuclear norm penalty of the matricization of the first $d/2$ modes of $\tensor{X}$ as opposed to a sum of nuclear norms of each of its $d$ modes, as is typically considered. 

Aside from convex relaxations of the tensor rank minimization problem,
the authors in \cite{tuckercompletion} develop an alternative
manifold-based approach to Tucker Tensor optimization similar to our
considerations for the Hierarchical Tucker case and subsequently
complete such tensors with missing entries. Each evaluation of the objective and Riemannian gradient requires $O(d(N+|\Omega|)K^d + dK^{d+1})$ operations, whereas our method only requires $O(dNK^2 + d|\Omega|K^3 + dK^4)$ operations. As a result of using the Hierarchical Tucker format instead of the Tucker format, our method scales much better as $d$, $N$, and $K$ grow. 

Previous work in completing tensors in the Tensor Train format, which is the Hierarchical Tucker format with a specific, degenerate binary dimension tree, includes \cite{prevwork:ttals,prevwork:altttorig}, wherein the authors use an alternating least-squares approach for the tensor completion problem. The derivations of the smooth manifold structure of the set of TT tensors can be found in \cite{ttmanifolds}. This work is a precursor for the manifold structure of Hierarchical Tucker tensors studied in \cite{htuckgeom}, upon which we expand in this article. For a comprehensive review of various tensor formats, we refer the reader to \cite{prevwork:tensorsurvey,prevwork:tensorsurvey2}. 

Owing to its extremely efficient storage requirements (which are 
\emph{linear} in the dimension $d$ as opposed to exponential in $d$),
the Hierarchical Tucker format has enjoyed a recent surge in
popularity for parametrizing high-dimensional problems. The hTucker toolbox \cite{htucktoolbox} contains a suite
of MATLAB tools for working with tensors in the HT format, including
efficient vector space operations, matrix-tensor and tensor-tensor
products, and truncations of full arrays to HT format. This
truncation, the so-called Hierarchical SVD developed in
\cite{htuckersvd}, allows one to approximate a full tensor in HT
format with a near-optimal approximation error. Even though
the authors in \cite{blackboxapprox-htuck} develop a HT truncation
method that does not need access to every entry of the tensor in order
to form the HT approximation, their approach requires algorithm-driven access to the entries, which does not apply for the seismic examples we consider below. A HT approach for solving dynamical systems is outlined in \cite{Lubich:2013ku}, which considers similar manifold structure as in this article applied in a different context. 

\subsection{Contributions and Organization}
In this paper, we extend the primarily theoretical results of
\cite{htuckgeom} to practical algorithms for solving optimization
algorithms on the HT manifold. In \Cref{sec:htd}, we introduce the
Hierarchical Tucker format.  We restate some of the results of
\cite{htuckgeom} in \Cref{sec:htd} to provide context for the
Riemannian metric we introduce on the quotient manifold in \Cref{sec:riemgeom}. Equipped with
this metric, we can now develop optimization methods on the HT
manifold in \Cref{sec:optimization} that are fast and SVD-free. For
large-scale, high-dimensional problems, the computational costs of
SVDs are prohibitive and affect the scalability of tensor completion
methods such as \cite{prevwork:tuckercompletion}. Since we are using the HT manifold rather than the Tucker manifold, we avoid an exponential dependence on the internal rank parameters as in \cite{tuckercompletion}. In \Cref{subsec:gramregularization}, we exploit the structure of HT tensors to regularize different
matricizations of the tensor \emph{without} having to compute SVDs of
these matricizations, lessening the effects of overfitting when there
are very few samples available. To the best of our knowledge, our approach is the first
instance of exploiting the manifold structure of HT tensors for
solving the tensor completion problem. We conclude by demonstrating the
effectiveness of our techniques on interpolating various seismic data
volumes with missing data points in all dimensions as well as missing
receivers, which is more realistic. Our numerical results are similar to those presented previously in \cite{DaSi1307:Hierarchical}, but much more extensive and include our regularization and Gauss-Newton based methods. In this paper, we also compare our method to a reference implementation of \cite{tuckercompletion} and achieve very reasonable results for our seismic data volumes. 

We note that the algorithmic results here generalize readily to complex tensor completion $\mathbb{C}^{n_1 \times n_2 \times \dots \times n_d}$ and more general subsampling operators $P_{\Omega}$. 

\section{Notation} 
%
In this paper, we denote vectors by lower case letters $x, y, z, \dots$, matrices by upper case, plain letters $A, B, C, \dots, X, Y Z$, and tensors by upper case, bold letters $\tensor{X}, \tensor{Y}, \tensor{Z}$.

\subsection{Matricization}
We let the
\emph{matricization} of a $n_1 \times n_2 \times \dots \times n_d$
tensor $\tensor{X}$ along the modes $t = (t_1, t_2, \dots, t_k)
\subset \{1, \dots, d\}$ be the matrix $X^{(t)}$ such that the indices
in $t$ are vectorized along the rows and the indices in $t^c$ are
vectorized along the columns, i.e., if we set $s = t^C$, then
\begin{align*}
X^{(t)} \in \mathbb{R}^{(n_{t_1} n_{t_2} ... n_{t_k}) \times (n_{s_1} n_{s_2} ... n_{s_{d -k}})} \\
(X^{(t)})_{(i_{t_1},..., i_{t_k}),(i_{s_1}, ... , i_{s_{d-k}})} := \mathbf{X}_{i_1,...,i_d}.
\end{align*}

We also use the notation $(\cdot)_{(t)}$ for the
\emph{dematricization} operation, i.e., $(X^{(t)})_{(t)} = \tensor{X}$, which
reshapes the matricized version of $\tensor{X}$ along modes $t$ back to its
full tensor form.

\subsection{Multilinear product}
\label{sec:multilinearproduct}
A natural operation to consider on tensors is that of the
\emph{multilinear product} \cite{htuckgeom,htuckersvd,prevwork:tensordecompositions}. 
\begin{defn} Given a $d-$tensor $\tensor{X}$ of size $n_1 \times n_2 \times
  \dots n_d$ and matrices $A_i \in \mathbb{R}^{m_i \times n_i}$, the
  multilinear product of $\{A_i\}_{i=1}^{d}$ with $\tensor{X}$, is the $m_1
  \times m_2 \times ... \times m_d$ tensor $\tensor{Y} = A_1 \times_1 A_2 \times_2 \dots A_d \times_d  \tensor{X}$,
  is defined in terms of the matricizations of $\tensor{Y}$ as
\begin{align*}
  Y^{(i)} = A_i X^{(i)} A_{d}^T \otimes A_{d-1}^T \otimes \dots
  A_{i+1}^T \otimes A_{i-1}^T \dots \otimes A_1^T, \quad i = 1, 2, \dots, d.
\end{align*}
Conceptually, we are applying operator each operator $A_i$ to dimension $i$ of the tensor $\tensor{X}$, keeping all other coordinates fixed. For example, when $A, X, B$ are matrices of appropriate sizes, the quantity $AXB^T$ can be written as $AXB^T = A \times_1 B \times_2 X$.
\end{defn}

The standard Euclidean inner product between two $d-$dimensional
tensors $X$ and $Y$ can be defined in terms of the standard Euclidean
product for vectors, by letting
\begin{align*}
  \langle \tensor{X}, \tensor{Y} \rangle := \vecmat(\tensor{X})^T \vecmat(\tensor{Y})
\end{align*}
where $\vecmat(\tensor{X}):= X^{(1,2,\dots,d)}$ is the usual vectorization
operator. This inner product induces a norm $\|\tensor{X}\|_2$ on the set of
all $d-$dimensional tensors in the usual way, $\|\tensor{X}\|_2 = \sqrt{\langle
  \tensor{X}, \tensor{X} \rangle}$.

Here we state several properties of the multilinear product, which are
 straightforward to prove.
\begin{proposition} 
Let $\{A_i\}_{i=1}^{d}$, $\{B_i\}_{i=1}^{d}$ be
  collections of linear operators and $\tensor{X}, \tensor{Y}$ be
  tensors, all of appropriate sizes, so that the multilinear products
  below are well-defined. Then we have the following:
\begin{enumerate}
\item $(A_1 \times_1 \dots, A_d \times_d) \circ (B_1 \times_1 \dots, B_d \times_d \tensor{X}) = (A_1 B_1) \times_1 \dots (A_d B_d) \times_d \tensor{X} \hfill \text{ \cite{prevwork:cp-approx-illposed} }$ 
\item $\langle A_1 \times \dots A_d \times_d \tensor{X}, B_1 \times_1 \dots B_d \times_d
\tensor{Y} \rangle = \langle (B_1^T A_1) \times_1 \dots (B_d^T A_d)\times_d
  \tensor{X}, \tensor{Y} \rangle $
\end{enumerate}
\end{proposition}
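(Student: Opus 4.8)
The plan is to prove both identities by working entirely at the level of matricizations, since the multilinear product is *defined* through its matricizations and the Euclidean inner product can be written as $\langle \tensor{X}, \tensor{Y}\rangle = \tr\bigl((X^{(i)})^T Y^{(i)}\bigr)$ for any fixed mode $i$ (because the Frobenius inner product of a matrix is invariant under reshaping, and every matricization is just a reshaping of the same entries). This reduces everything to standard matrix algebra together with the mixed-product property of the Kronecker product, $(P \otimes Q)(R \otimes S) = (PR)\otimes(QS)$.

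For the first identity (composition), I would fix a mode $i$ and compute the $i$-th matricization of the left-hand side directly from the definition. Applying the definition once, the inner product $\tensor{Z} := B_1 \times_1 \dots B_d \times_d \tensor{X}$ satisfies $Z^{(i)} = B_i X^{(i)} \bigl(B_d^T \otimes \dots \otimes B_{i+1}^T \otimes B_{i-1}^T \otimes \dots \otimes B_1^T\bigr)$. Applying the definition a second time to the outer product and substituting, the matricization of the full left-hand side becomes $A_i\, Z^{(i)} \bigl(A_d^T \otimes \dots \otimes A_1^T\bigr)$; here the inner and outer mode-$i$ factors multiply as $A_i B_i$, while the trailing Kronecker factors recombine via the mixed-product property into $\bigl((A_d B_d)^T \otimes \dots \otimes (A_1 B_1)^T\bigr)$. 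This is precisely the $i$-th matricization of $(A_1 B_1)\times_1 \dots (A_d B_d)\times_d \tensor{X}$. Since two tensors agreeing on a single matricization are equal, the identity follows.

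For the second identity (adjoint-type relation), I would again fix mode $i$ and write both sides as traces of products of matricizations. The left-hand side is $\tr\bigl((A_1\times_1\dots A_d\times_d \tensor{X})^{(i)}{}^T (B_1\times_1\dots B_d\times_d \tensor{Y})^{(i)}\bigr)$. Expanding each matricization through the definition produces a product of the form $\bigl(A_i X^{(i)} K_A\bigr)^T \bigl(B_i Y^{(i)} K_B\bigr)$, where $K_A$ and $K_B$ are the trailing Kronecker products of the transposed factors. Using cyclicity of the trace to move the outer factors together and applying the mixed-product property to $K_A^T K_B$, the $A_i^T B_i$ and $(A_j^T B_j)$ terms reassemble into exactly the matricization of $(B_1^T A_1)\times_1 \dots (B_d^T A_d)\times_d \tensor{X}$ paired against $\tensor{Y}$, which is the right-hand side.

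The only genuine bookkeeping obstacle is the careful tracking of the Kronecker ordering: the definition lists the transposed factors in the reversed order $A_d^T \otimes \dots \otimes A_1^T$ with the $i$-th factor omitted, so one must verify that the mixed-product property is applied factor-by-factor in a consistent order and that the omitted index $i$ matches on both sides. Once the index conventions are fixed, each step is a routine application of $(P\otimes Q)(R\otimes S) = (PR)\otimes(QS)$ and trace cyclicity, so I do not anticipate any conceptual difficulty beyond this indexing care.
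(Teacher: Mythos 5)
The paper never actually proves this proposition---it introduces it with ``Here we state several properties of the multilinear product, which are straightforward to prove'' and defers item 1 to a citation---so there is no argument of record to compare against; your matricization-based route is the natural way to discharge the claim, and it does work. Item 1 as you sketch it is correct as stated: composing the two matricization formulas and using the mixed-product property gives $(A_iB_i)X^{(i)}\bigl((A_dB_d)^T \otimes \cdots \otimes (A_1B_1)^T\bigr)$ (mode-$i$ factors omitted), and agreement in one matricization implies equality of tensors.

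In item 2, however, one of the bookkeeping details you flagged is written down incorrectly. The product that arises after cyclic permutation of the trace is $K_B K_A^T$, not $K_A^T K_B$: from
\begin{align*}
\tr\bigl(K_A^T (X^{(i)})^T A_i^T B_i Y^{(i)} K_B\bigr) = \tr\bigl((X^{(i)})^T A_i^T B_i Y^{(i)} K_B K_A^T\bigr),
\end{align*}
and indeed $K_A^T K_B$ is in general not even conformable when $\tensor{X}$ and $\tensor{Y}$ have different sizes (its inner dimensions are $\prod_{j\neq i} n_j$ and $\prod_{j\neq i} p_j$ for $\tensor{X} \in \mathbb{R}^{n_1 \times \cdots \times n_d}$, $\tensor{Y} \in \mathbb{R}^{p_1 \times \cdots \times p_d}$). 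Correspondingly, the mixed-product property applied to $K_B K_A^T$ produces off-mode factors $B_j^T A_j$, not $A_j^T B_j$, while the mode-$i$ factor appears as $A_i^T B_i = (B_i^T A_i)^T$; this is exactly what matches the mode-$i$ matricization of the right-hand side, namely $\tr\bigl((X^{(i)})^T (A_i^T B_i)\, Y^{(i)} (B_d^T A_d \otimes \cdots \otimes B_1^T A_1)\bigr)$ with index-$i$ factors omitted. With those transposes corrected, your argument closes. A marginally cleaner route for item 2, which avoids singling out a mode and hence all of this bookkeeping, is to vectorize: $\vecmat(A_1 \times_1 \cdots A_d \times_d \tensor{X}) = (A_d \otimes \cdots \otimes A_1)\vecmat(\tensor{X})$, so the left-hand side equals $\vecmat(\tensor{X})^T \bigl((A_d^T B_d) \otimes \cdots \otimes (A_1^T B_1)\bigr)\vecmat(\tensor{Y})$, which is visibly the vectorization of the right-hand side.
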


\subsection{Tensor-tensor contraction}
Another natural operation to consider between two tensors is
\emph{tensor-tensor contraction}, a generalization of matrix-matrix
multiplication. We define tensor-tensor contraction in terms of
tensors of the same dimension for ease of presentation
\cite{elden2009newton}.
\begin{defn}
  Given a $d-$tensor $\tensor{X}$ of size $n_1 \times \dots \times n_d$ and a
  $d-$tensor $\tensor{Y}$ of size $m_1 \times \dots \times m_d$, select $s, t
  \subset \{ 1, \dots, d \}$ such that $|s| = |t|$ and $n_{s_i} =
  m_{t_i}$ for $i=1, \dots, |s|$. The \emph{tensor-tensor contraction}
  of $\tensor{X}$ and $\tensor{Y}$ along modes $s,t$, denoted $\langle \tensor{X}, \tensor{Y}
  \rangle_{(s,t)}$, is defined as $(2d-(|s| + |t|))-$tensor $Z$ of
  size $(n_{s^c}, m_{t^c})$, satisfying
\begin{align*}
  \tensor{Z} = \langle \tensor{X}, \tensor{Y} \rangle_{(s,t)} = ( X^{(s^c)} Y^{(t)} )_{(s^c),(t^c)}.
\end{align*}
Tensor tensor contraction over modes $s$ and $t$ merely sums over the dimensions specified by $s, t$ in $\tensor{X}$ and $\tensor{Y}$ respectively, leaving the dimensions $s^c$ and $t^c$ free. 

\end{defn}

The inner product $\langle \tensor{X}, \tensor{Y} \rangle$ is a special case of the tensor product when $s = t = \{1, \dots, d\}$. 

We also make use of the fact that when the index sets $s, t$ are $s, t
= [d]\setminus i$ with $\tensor{X}$, $\tensor{Y}$, and
$A_i$ are appropriately sized for $i=1, \dots, d$, then
\begin{align}
\label{eq:multilinearproperty}
  \langle A_1 \times_1 A_2 \times_2 \dots A_d \times_d \tensor{X}, \tensor{Y} \rangle_{[d]\setminus i,
    [d]\setminus i} = A_i \langle A_1 \times_1 A_2 \times_2 \dots A_{i-1} \times_{i-1} 
  A_{i+1} \times_{i+1} \dots A_d \times_d \tensor{X}, \tensor{Y} \rangle_{[d]\setminus i, [d]\setminus
    i}
\end{align}
i.e., applying $A_i$ to dimension $i$ commutes with contracting tensors over every dimension except the $i$th one. 

\section{Smooth Manifold Geometry of the Hierarchical Tucker Format}
\label{sec:manifoldgeomht}
In this section, we review the definition of the Hierarchical Tucker format (\Cref{sec:htd}) as well as previous results \cite{htuckgeom} in the smooth manifold geometry of this format (\Cref{sec:quotmanifold}). We extend these results in the next section by introducing a Riemannian metric on the space of HT parameters and subsequently derive the associated Riemannian gradient with respect to this metric. A reader familiar with the results in \cite{htuckgeom} can glance over this section quickly for a few instances of notation and move on to \Cref{sec:riemgeom}.

\subsection{Hierarchical Tucker Format}
\label{sec:htd}
The standard definition of the Hierarchical Tucker format relies on the
notion of a \emph{dimension tree}, chosen apriori, which specifies the
format \cite{htuckersvd}. Intuitively, the dimension tree specifies which groups of
dimensions are ``separated'' from other groups of dimensions, where
``separation'' is used in a similar sense to the SVD in two dimensions. 

\begin{defn}
\label{def:T}
  A \emph{dimension tree} $T$ is a non-trivial binary tree such that
  \begin{itemize}
  \item the root, $\troot$, has the label $\troot = \{1, 2, \dots, d\}$
  \item for every $t \not\in L$, where $L$ is the set of leaves of
    $T$, the labels of its left and right children, $t_l, t_r$, form a
    partition of the label for $t$, i.e., $ t_l \cup t_r = t$ and $ t_l\cap t_r = \emptyset$.
  \end{itemize}
\end{defn}

An example of a dimension tree when $d = 6$ is given in \Cref{fig:dimtree}. 
\begin{figure}
\centering
\includegraphics[scale=0.9]{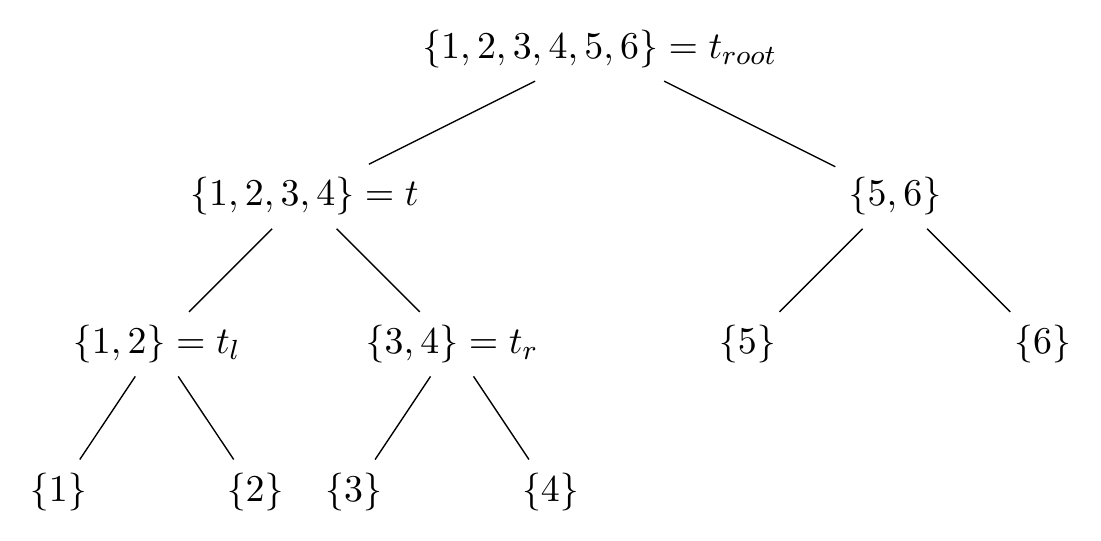}
\caption{ Complete dimension tree for $\{1, 2, 3, 4, 5, 6\}$. }
\label{fig:dimtree}
\end{figure}

\begin{remark}
For the following derivations, we take the point of view of each quantity with a subscript $(\cdot)_t$ is associated to the node $t \in T$.  By Definition \ref{def:T}, for each $t \in T$, there is a corresponding subset of $\{1, \dots, d\}$ associated to $t$. If our HT tensor has dimensions $n_1 \times n_2 \times ... \times n_d$, we let $n_t = \prod_{i \in t} n_i$ and, when $t \in T \setminus L$, $n_t$ satisfies $n_t = n_{t_l} n_{t_r}$. 
\end{remark}

\begin{defn}
\label{def:phix}
Given a dimension tree $T$ and a vector of \emph{hierarchical ranks}
$(k_t)_{t\in T}$ with $k_t \in \mathbb{Z}^+$, a tensor $\tensor{X} \in
\fullspace$ can be written in the
\emph{Hierarchical Tucker format} if there exist parameters $x = ((U_t)_{t \in L},
(\tensor{B}_t)_{t \in T \setminus L})$ such that $\phi(x) = \tensor{X}$, where
\begin{align}
\label{eq:phix}
\vecmat(\phi(x)) & = U_{t_l} \times_1 U_{t_r} \times_2 B_{\troot} & t = \troot\\
U_t & = (U_{t_l} \times_1 U_{t_r} \times_2 \tensor{B}_t)^{(1,2)} & t \not\in L \cup \troot \notag 
\end{align}
where $U_t \in \mathbb{R}_*^{n_t \times k_t}$, the set of full-rank
$n_t \times k_t$ matrices, for $t \in L$ and $\tensor{B}_t \in
\mathbb{R}_*^{k_{t_l} \times k_{t_r} \times k_t}$, the set of
$3$-tensors of full \emph{multilinear} rank, i.e., 
\begin{align*}
\text{rank}(B_t^{(1)}) = k_{t_l}, \quad \text{rank}(B_t^{(2)}) = k_{t_r}, \quad \text{rank}(B_t^{(3)}) = k_t.
\end{align*}

\end{defn}

We say the parameters $x = (U_t, \tensor{B}_t)$ are in \emph{Orthogonal
  Hierarchical Tucker} (OHT) format if, in addition to the above construction, we
also have
\begin{align}
  U_t^T U_t &= I_{k_t} \quad \text{for } t\in L \notag \\
  (B_t^{(1,2)})^T B_t^{(1,2)} & = I_{k_t} \quad \text{for } t\not\in L \cup \troot
\label{eq:oht}
\end{align}

We have made a slight modification of the definition of the HT format
compared to \cite{htuckgeom} for ease of presentation. When $d = 2$,
our construction is the same as the subspace decomposition introduced
in \cite{prevwork:subspacemc} for low-rank matrices, but our approach
is not limited to this case.

Owing to the recursive construction (\ref{def:phix}), the intermediate matrices $U_t$ for $t \in T \setminus L$ do not need to be stored. Instead, specifying $U_t$ for $t \in L$ and $\tensor{B}_t$ for $t \in T \setminus L$ determines $\tensor{X} = \phi(x)$ completely. Therefore, the overall number of parameters $x = ((U_t)_{t \in L}, (\tensor{B}_t)_{t \in T \setminus L})$ is bounded above by $dNK + (d - 2)K^3 + K^2$, 
where $N = \max_{i=1, \dots, d} n_i$ and $K = \max_{t \in T} k_t$. When $d \ge 4$ and $K \ll N$, this quantity is much less than the $N^d$ parameters typically needed to represent $\tensor{X}$. 
\begin{defn}
  The \emph{hierarchical rank} of a tensor $\tensor{X} \in
  \fullspace$ corresponding to a
  dimension tree $T$ is the vector $\mathbf{k} = (k_t)_{t \in T}$
  where
\begin{align*}
k_t = \text{rank}(X^{(t)}).
\end{align*}
\end{defn}
We consider the set of \emph{Hierarchical Tucker tensors of fixed rank $\mathbf{k} = (k_t)_{t \in T}$}, that is,
\begin{align*}
  \htuckspaceclean = \{ \tensor{X} \in \fullspace |\, \text{rank}({X}^{(t)}) = {k}_t\quad \text{for} \, t\in
  T\setminus \troot \} . 
\end{align*}

We restrict ourselves to parameters $x$ that are strictly orthogonalized, as in (\ref{eq:oht}). In addition to significantly simplifying the resulting notation, this restriction allows us to avoid cumbersome and unnecessary matrix inversions, in particular for the resulting subspace projections in future sections. Moreover, using only orthogonalized parameters avoids the problem of algorithms converging to points with possibly lower than prescribed HT rank, see \cite[Remark 4.1]{htuckgeom}. This restriction does not reduce the expressibility of the HT format, however, since for any non orthogonalized parameters $x$ such that $\mathbf{X} = \phi(x)$, there exists orthogonalized parameters $x'$ with $\mathbf{X} = \phi(x')$ \cite[Alg. 3]{htuckersvd}. 

We use the grouping $x = (U_t, \tensor{B}_t)$ to denote $((U_t)_{t \in L}, (\tensor{B}_t)_{t \in T \setminus L})$, as these are our ``independent variables'' of interest in this case. In order to avoid cumbersome notation, we also suppress the dependence on $(T, \mathbf{k})$ in the following, and presume a fixed dimension tree $T$ and hierarchical ranks $\mathbf{k}$. 

\begin{figure}
\centering
\includegraphics[scale=0.9]{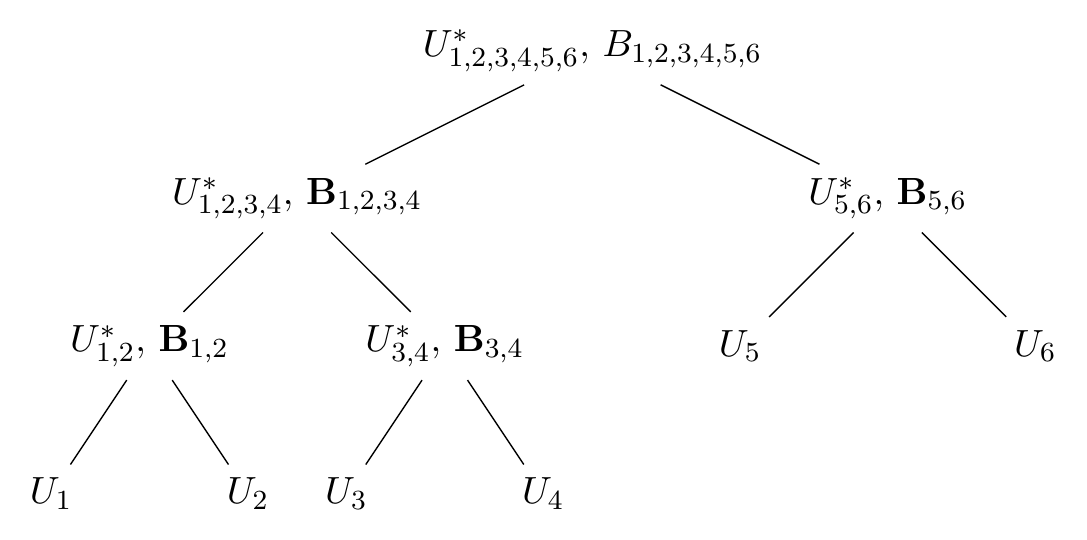}
\caption{ Dimension tree representation of (\ref{eq:phix}) with dimensions $\{1, 2, 3, 4, 5, 6\}$. Starred quantities are computed recursively. }
\label{fig:dimtree2}
\end{figure}

\subsection{Quotient Manifold Geometry}
\label{sec:quotmanifold}
The results in this section are adapted from those in \cite{htuckgeom} to the orthogonalized parameter case and we include them here in the interest of keeping this article self contained. \\
Below, let $S_{k_{t_l},k_{t_r},k_t}$ be the closed submanifold of $\mathbb{R}_*^{k_{t_l} \times k_{t_r} \times k_t}$, the set of $3-$tensors with full multilinear rank, such that $\mathbf{S} \in S_{k_{t_l},k_{t_r},k_t}$ is orthonormal along modes $1$ and $2$, i.e., $(S^{(1,2)})^T S^{(1,2)} = I_{k_t}$ and let $\text{St}(n_t,k_t)$ be the $n_t \times k_t$ Stiefel manifold of $n\times k_t$ matrices with orthonormal columns. \\
Our orthogonal parameter space $\paramspaceclean$ is then 
\begin{align*}
\paramspaceclean = \bigtimes_{t \in L} \text{St}(n_t, k_t) \times \bigtimes_{t \not\in L \cup \troot} S_{k_{t_l}, k_{t_r}, k_t} \times  \mathbb{R}_*^{k_{(\troot)_l} \times k_{(\troot)_r}}
\end{align*}
with corresponding tangent space at $x = (U_t, \tensor{B}_t) \in \paramspaceclean$
\begin{align*}
\mathcal{T}_x \paramspaceclean = \bigtimes_{t \in L} \mathcal{T}_{U_t} \text{St}(n_t, k_t) \times \bigtimes_{t \not\in L \cup\troot}  \mathcal{T}_{\tensor{B}_t} S_{k_{t_l}, k_{t_r}, k_{t} } \times \mathbb{R}^{k_{(\troot)_l} \times k_{(\troot)_r}}.
\end{align*}
Note that $\mathcal{T}_{Y} \text{St}(n,p) = \{ Y \Omega + Y^{\perp} K : \Omega^T = -\Omega \in \mathbb{R}^{p \times p}, K \in \mathbb{R}^{(n-p) \times p} \}$. We omit an explicit description of $\mathcal{T}_{B_t} S_{k_{t_l}, k_{t_r}, k_{t} }$ for brevity. \\

Let $\phi : \paramspaceclean \to \htuckspaceclean$ be the parameter to tensor map in (\ref{eq:phix}). Then for each $\tensor{X} \in \htuckspaceclean$, then there is an inherent ambiguity in its representation by parameters $x$, i.e., $\tensor{X} = \phi(x) = \phi(y)$ for distinct parameters $x$ and $y$ with the following relationship between them. 

Let $\liegroup$ be the Lie group
\begin{align*}
\liegroup = \{ (A_t)_{t \in T} : A_t \in O(k_t) \}.
\end{align*}
where $O(p)$ is the orthogonal group of $p\times p$ matrices and the group action of component-wise multiplication. Let $\theta$ be the group action 
\begin{align}
\label{eq:groupaction}
& \theta : \paramspaceclean \times \liegroup \to \paramspaceclean \\
& (x,\mathcal{A}) := ((U_t, B_t), (A_t)) \mapsto \theta_x(\mathcal{A}) := (U_t A_t, A_{t_l}^{T} \times_1 A_{t_r}^{T} \times_2 A_t^T \times_3 \tensor{B}_t). \notag
\end{align}
Then $\phi(x) = \phi(y)$ if and only if there exists a unique $ \mathcal{A} = (A_t)_{t \in T} \in \liegroup$ such that $x = \theta_{A}(y)$ \cite[Prop. 3.9]{htuckgeom}. Therefore these are the only types of ambiguities we must consider in this format. 

It follows that the orbit of $x$, 
\begin{align*}
\liegroup x = \{ \theta_{\mathcal{A}}(x) : \mathcal{A} \in \liegroup \},
\end{align*}
is the set of all parameters that map to the same tensor $\tensor{X} = \phi(x)$ under $\phi$. This induces an equivalence relation on the set of parameters $\paramspaceclean$,
\begin{align*}
x \sim y \text{ if and only if } y \in \liegroup x.
\end{align*}
If we let $\quotspace$ be the corresponding quotient space of equivalence classes and $\pi : \paramspaceclean \to \quotspace$ denote the quotient map, then pushing $\phi$ down through $\pi$ results in an injective function
\begin{align*}
\hat{\phi} : \quotspace \to \htuckspaceclean 
\end{align*}
whose image is all of $\htuckspaceclean$, and hence is an isomorphism (in fact, a diffeomorphism). 

The vertical space, $\vertspace$, is the subspace of
$\paramspacecleanT$ that is tangent to $\pi^{-1}(x)$. That is, $dx^v = (\delta U_t^v, \delta \tensor{B}_t^v) \in \vertspace $ when it is of the form \cite[Eq. 4.7]{htuckgeom}
\begin{align}
\label{eq:vertspace}
\delta U_t^v &= U_t D_t & \quad &\text{for } t \in L \notag \\
\delta \tensor{B}_t^v &= D_t \times_3 \tensor{B}_t - D_{t_l} \times_1 \tensor{B}_t - D_{t_r} \times_2 \tensor{B}_t & \quad &\text{for } t \in T \setminus L \cup \troot \notag \\
\delta B_{\troot}^v  &= - D_{t_l} B_{\troot} - B_{\troot} D_{t_r}^T & \quad &\text{for } t = \troot
\end{align}
where $D_t \in \text{Skew}(k_t)$, the set of $k_t \times k_t$ skew symmetric matrices. A straightforward computation shows that $D\phi(x)\big |_{\vertspace} \equiv 0$, and therefore for every $dx^v \in \vertspace$, $\phi(x) = \phi(x + dx^v)$. From an optimization point of view, moving from the point $x$ to $x + dx^v$ will not change the current tensor $\phi(x)$ and therefore for any search direction $p$, we must filter out the corresponding component in $\vertspace$ in order to compute the gradient correctly. We accomplish this by projecting on to a \emph{horizontal space}, which is any complementary subspace to $\vertspace$. One such choice is \cite[Eq. 4.8]{htuckgeom}, 
\begin{align}
\label{eq:horizspace}
\horizspace = \left\{ (\delta U_t^h, \delta \tensor{B}_t^h) : \begin{aligned}
    (\delta U_t^h)^T U_t = 0_{k_t} & \; \text{for } t \in L \\
    (\delta B_t^{(1,2)})^T B_t^{(1,2)} = 0_{k_t} & \; \text{for } t \not\in L\cup \troot
\end{aligned} \right\}.
\end{align}
Note that there is no restriction on $B_{\troot}^h$, which is a
matrix. \\
This choice has the convenient property that $\horizspace$ is \emph{invariant} under the action of $\theta$, i.e., \cite[Prop. 4.9]{htuckgeom}
\begin{align}
\label{eq:horizspacetheta}
D\theta(x,\mathcal{A})[\horizspace,0] = \mathcal{H}_{\theta_x(\mathcal{A})} \paramspaceclean,
\end{align}
which we shall exploit for our upcoming discussion of a Riemannian metric. 
The horizontal space $\horizspace$ allows us to uniquely represent abstract tangent vectors in $T_{\pi(x)} \quotspace$ with concrete vectors in $\horizspace$. 

\section{Riemannian Geometry of the HT Format}
\label{sec:riemgeom}
In this section, we introduce a Riemannian metric on the parameter space $\paramspaceclean$ that will allow us to use parameters $x$ as representations for their equivalence class $\pi(x)$ in a well-defined manner while performing numerical optimization. 
\subsection{Riemannian metric}
Since each distinct equivalence class $\pi(x)$ is uniquely identified
with each distinct value of $\phi(x)$, the quotient manifold
$\paramspaceclean / \liegroup$ is really our manifold of interest for
the purpose of computations---i.e, we would like to formulate our
optimization problem over the equivalence classes
$\pi(x)$. Unfortunately, $\paramspaceclean / \liegroup$ is an abstract
mathematical object and thus hard to implement numerically. By  
introducing a Riemannian metric on $\paramspaceclean$ that respects
its quotient structure, we can formulate concrete
optimization algorithms in terms of the HT parameters without being affected
by the non-uniqueness of the format---i.e., by optimizing over parameters $x$ while implicitly performing optimization over equivalence classes $\pi(x)$. Below, we explain how to
explicitly construct this Riemannian metric for the HT format.

Let $\eta_x = (\delta U_t, \delta \tensor{B}_t), \zeta_x = (\delta
V_t, \delta \tensor{C}_t) \in \paramspacecleanT$ be tangent vectors at the point $x = (U_t, \tensor{B}_t) \in \paramspaceclean$. Then we define the inner product $g_x
(\cdot,\cdot)$ at $x$ as
\begin{align}
\label{eq:riemmetric}
g_x(\eta_x, \zeta_x) & := \sum_{t \in T} \tr((U_t^TU_t)^{-1} \delta U_t^T \delta V_t)\\
&+ \sum_{t \not\in L \cup \troot} \langle \delta \tensor{B}_t , (U_{t_l}^T U_{t_l}) \times_1 (U_{t_r}^T U_{t_r}) \times_2 (U_t^TU_t)^{-1} \times_3 \delta \tensor{C}_t \rangle \notag\\
&+ \text{tr}( U_{(\troot)_r}^T U_{(\troot)_r} \delta B_{\troot}^T U_{(\troot)_l}^T U_{(\troot)_l}
\delta C_{\troot} ). \notag 
\end{align}

By the full-rank conditions on $U_t$ and $\tensor{B}_t$ at each node, by
definition of the HT format, each $U_t^T U_t$ for $t \in T$ is
symmetric positive definite and varies smoothly with $x = (U_t,
\tensor{B}_t)$. As a result, $g_x$ is a smooth, symmetric
positive definite, bilinear form on $\paramspacecleanT$, i.e., a
Riemannian metric. Note that when $x$ is in OHT, as it is in our case, $g_x$ reduces to the
standard Euclidean product on the parameter space $\paramspacecleanT$,
making it straightforward to compute in this case. 

\begin{proposition}\label{prop:isometric} On the Riemannian manifold $(\paramspaceclean,g)$,
  $\theta$ defined in (\ref{eq:groupaction}) acts \emph{isometrically}
  on $\mathcal{M}$, i.e., for every $\fancyA \in \liegroup$, $\xi_x,
  \zeta_x \in \horizspace$
\begin{align*}
  g_x(\xi_x,\zeta_x) = g_{\theta_{\mathcal{A}}(x)}(\theta^*\xi_x, \theta^*\zeta_x)
\end{align*}
where $\theta^*$ is the \emph{push-forward} map, $\theta^*v = D\theta(x,\mathcal{A})[v]$.
\end{proposition}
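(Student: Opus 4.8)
The plan is to exploit the fact that every point of $\paramspaceclean$ is orthogonalized by its very definition (each leaf factor lies on a Stiefel manifold and each interior transfer tensor lies in $S_{k_{t_l},k_{t_r},k_t}$), so that, as remarked after (\ref{eq:riemmetric}), at \emph{every} $x \in \paramspaceclean$ the metric $g_x$ collapses to the plain Euclidean inner product on $\mathcal{T}_x \paramspaceclean$. The whole statement then reduces to checking that the pushforward $\theta^*$ preserves this Euclidean inner product, summand by summand, using nothing more than the orthogonality of the blocks $A_t \in O(k_t)$ of $\fancyA$.

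First I would record two preliminary facts. For fixed $\fancyA$, the map $x \mapsto \theta_x(\fancyA)$ is the restriction to $\paramspaceclean$ of a \emph{linear} map in $x$: it sends $U_t \mapsto U_t A_t$ on leaves and $\tensor{B}_t \mapsto A_{t_l}^T \times_1 A_{t_r}^T \times_2 A_t^T \times_3 \tensor{B}_t$ on interior nodes. Hence its differential is that same map applied to tangent data, giving the explicit formula $\theta^*(\delta U_t, \delta \tensor{B}_t) = (\delta U_t A_t,\ A_{t_l}^T \times_1 A_{t_r}^T \times_2 A_t^T \times_3 \delta \tensor{B}_t)$. Second, $\theta_{\fancyA}$ maps $\paramspaceclean$ into itself, i.e. it preserves orthogonality: on leaves this is immediate from $(U_t A_t)^T (U_t A_t) = A_t^T A_t = I$, while for a transfer tensor one uses that the $(1,2)$-matricization intertwines the multilinear product as $(A_{t_l}^T \otimes A_{t_r}^T) B_t^{(1,2)} A_t$, so that $((\,\cdot\,)^{(1,2)})^T (\,\cdot\,)^{(1,2)} = A_t^T (B_t^{(1,2)})^T (A_{t_l}A_{t_l}^T \otimes A_{t_r}A_{t_r}^T) B_t^{(1,2)} A_t = A_t^T I A_t = I$. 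In particular $g_{\theta_{\fancyA}(x)}$ is again Euclidean, so both sides of the claimed identity are well defined.

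With these in hand I would verify invariance on each of the three families of summands in (\ref{eq:riemmetric}). For the leaf trace terms, cyclicity of the trace and $A_t A_t^T = I$ give $\tr\big((\delta U_t A_t)^T (\delta V_t A_t)\big) = \tr\big(A_t^T \delta U_t^T \delta V_t A_t\big) = \tr(\delta U_t^T \delta V_t)$, and the root term simplifies in exactly the same way. For the interior transfer-tensor terms, both arguments of the inner product carry the \emph{same} triple of transforms $A_{t_l}^T, A_{t_r}^T, A_t^T$, so the second property of the multilinear product lets me move all the transforms onto one argument as the products $A_{t_l}A_{t_l}^T,\ A_{t_r}A_{t_r}^T,\ A_t A_t^T$, each of which is the identity; hence $\langle \theta^*\delta\tensor{B}_t, \theta^*\delta\tensor{C}_t\rangle = \langle \delta\tensor{B}_t, \delta\tensor{C}_t\rangle$. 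Summing over $t$ yields $g_{\theta_{\fancyA}(x)}(\theta^*\xi_x, \theta^*\zeta_x) = g_x(\xi_x, \zeta_x)$. Since this computation never invokes horizontality it holds for all tangent vectors and a fortiori on $\horizspace$, while the invariance (\ref{eq:horizspacetheta}) of $\horizspace$ guarantees that $\theta^*\xi_x, \theta^*\zeta_x$ are again horizontal, so the restricted statement is well posed.

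I expect the only genuinely delicate step to be the transfer-tensor bookkeeping: establishing that the $(1,2)$-matricization turns the multilinear product into left multiplication by the Kronecker factor $A_{t_l}^T \otimes A_{t_r}^T$ and right multiplication by $A_t$, which is what both the orthogonality check and the clean application of the multilinear-product identity rely on. Keeping the Kronecker ordering and the mode labels consistent with the definition of the multilinear product is the one place an error could slip in; everything else is forced by $A_t \in O(k_t)$ together with the Euclidean reduction of $g$.
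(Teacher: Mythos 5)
Your proof is correct, and its computational core---checking \eqref{eq:riemmetric} node by node using the multilinear-product identities of Proposition 1 together with $A_t \in O(k_t)$---coincides with the paper's; the difference lies in how the metric is handled. The paper keeps the general weighted form of $g$: it shows that the Gramian weights transform by conjugation, $V_t^T V_t = A_t^T (U_t^T U_t) A_t$ at every node (and $(V_t^TV_t)^{-1} = A_t^T(U_t^TU_t)^{-1}A_t$), and then cancels these against the transformed tangent vectors via the composition and adjoint properties of the multilinear product; this buys the extension, noted immediately after the proof, that the identical computation covers non-orthogonalized parameters as in \cite{htuckgeom}. You instead collapse both $g_x$ and $g_{\theta_{\mathcal{A}}(x)}$ to the Euclidean inner product from the outset---which requires your extra, correctly executed, check that $\theta_{\mathcal{A}}$ preserves the OHT conditions \eqref{eq:oht}---after which invariance is just trace cyclicity at the leaves and root plus a single application of the adjoint property at interior nodes. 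You also obtain the pushforward formula from the linearity of $x \mapsto \theta_x(\mathcal{A})$ rather than citing the horizontal-space invariance \eqref{eq:horizspacetheta} as the paper does, and you correctly observe that the computation holds on all of $\mathcal{T}_x\paramspaceclean$, horizontality being needed only for the restricted statement to be meaningful on the quotient. Two minor remarks: under the paper's matricization convention (compare its use of $U_{t_r} \otimes U_{t_l}$ in Section \ref{sec:riemanniangradient}) your Kronecker factor should read $A_{t_r}^T \otimes A_{t_l}^T$ rather than $A_{t_l}^T \otimes A_{t_r}^T$, but as you anticipate this is immaterial here since the factor is orthogonal in either ordering; and the fact that $\theta_{\mathcal{A}}$ maps $\paramspaceclean$ into itself is already built into the definition \eqref{eq:groupaction} of the group action, so your verification of it, while reassuring, is not logically required.
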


\begin{proof}
Let $x = (U_t, \tensor{B}_t) \in \paramspaceclean$, $y = (V_t, \tensor{C}_t) = \theta_{\mathcal{A}}(x) = (U_tA_t,A_{t_l}^{T} \times_1 A_{t_r}^{T} \times_2 A_t^T\times_3 \tensor{B}_t)$ for $\mathcal{A} \in \liegroup$.

If we write $\eta_x = (\delta U_t, \delta \tensor{B}_t)$, $\zeta_x = (\delta V_t, \delta \tensor{C}_t)$ for $\eta_x,\zeta_x \in \horizspace$, then, by (\ref{eq:horizspacetheta}), it follows that 
\begin{align*}
\eta_{y} = \theta^* \eta_x = (\delta U_t A_t, A_{t_l}^{T} \times_1 A_{t_r}^{T} \times_2 A_t^T \times_3 \delta \tensor{B}_t)
\end{align*}
and similarly for $\zeta_{y}$.

We will compare each component of the sum of (\ref{eq:riemmetric}) term by term. For ease of presentation, we only consider interior nodes $t \not\in L \cup \troot$, as leaf nodes and the root node are handled in an analogous manner.

For $t \not\in L \cup \troot$, let $(\xi_y)_t $ be the component of $\xi_y$ at the node $t$, i.e.,
\begin{align*}
(\xi_y)_t &= A_{t_l}^{T} \times_1 A_{t_r}^{T} \times_2 A_t^T \times_3 \delta \tensor{B}_t \\
& := \widetilde{\delta \tensor{B}_t},
\end{align*}
and similarly let $\widetilde{\delta \tensor{C}_t} := (\zeta_y)_t$. 

A straightforward computation based on (\ref{eq:phix}) and  (\ref{eq:groupaction}) yields that 
\begin{align*}
 V_{t_l}^T V_{t_l} \times_1 V_{t_r}^T V_{t_r} \times_2 (V_t^T V_t)^{-1} \times_3 \tensor{Y}  = (A_{t_l}^TU_{t_l}^T U_{t_l} A_{t_l})\times_1 (A_{t_r}^T U_{t_r}^T U_{t_r} A_{t_r}) \times_2 (A_{t}^{T}(U_t^T U_t)^{-1} A_{t}) \times_3 \tensor{Y}
\end{align*}
for appropriately sized $\tensor{Y}$. In particular, for $\tensor{Y} = \widetilde{\tensor{\delta C}_t}$, we have that
\begin{align*}
&\langle \widetilde{\delta \tensor{B}_t}, V_{t_l}^T V_{t_l} \times_1 V_{t_r}^T V_{t_r} \times_2 (V_t^T V_t)^{-1} \times_3 \widetilde{\delta \tensor{C}_t} \rangle \\
& = \langle A_{t_l}^{T} \times_1 A_{t_r}^{T} \times_2 A_t^T \times_3 \delta \tensor{B}_t, \\
& ((A_{t_l}^TU_{t_l}^T U_{t_l} A_{t_l}) \times_1 (A_{t_r}^T U_{t_r}^T U_{t_r} A_{t_r}) \times_2 (A_{t}^{T}(U_t^T U_t)^{-1} A_{t}) \times_3) \circ (A_{t_l}^{T} \times_1 A_{t_r}^{T} \times_2 A_t^T \times_3 \delta \tensor{C}_t) \rangle \\
&= \langle \delta \tensor{B}_t, (U_{t_l}^T U_{t_l}) \times_1 (U_{t_r}^T U_{t_r}) \times_2 (U_t^T U_t)^{-1} \times_3 \delta \tensor{C}_t \rangle \quad \text{using Prop. 1.2 and } A_t \in O(k_t). \\
\end{align*} 
Adding the terms for each $t \in T$, we obtain
\begin{align*}
g_x(\xi_x, \zeta_x) = g_{\theta_{\mathcal{A}}(x)}(\theta^*\xi_x, \theta^*\zeta_x).
\end{align*}
\end{proof}

Although the above computation uses the fact that $A_t \in O(k_t)$, an almost identical calculation yields \Cref{prop:isometric} when $x$ is non orthogonalized, as considered in \cite{htuckgeom}.  As we are interested in carrying out
our optimization using the HT parameters $x$ as proxies for their equivalence
classes $\pi(x)$, this proposition states that if we measure inner
products between two tangent vectors at the point $x$, we obtain the
same result as if we had measured the inner product between two
tangent vectors transformed by $\theta_{\fancyA}$ at the point
$\theta_{\fancyA} (x)$. In this sense, once we have a unique association of tangent vectors in $\quotspace$ with a subspace of $\paramspacecleanT$, we can use the actual
representatives, the parameters $x$, instead of the abstract equivalence class $\pi(x)$, in a well-defined way 
during our optimization. 
This shows that $\quotspace$, endowed with the Riemannian metric
\begin{align*}
  g_{\pi(x)}(\xi,\zeta) := g_x(\xi^h_x,\zeta^h_x)
\end{align*}
where $\xi^h, \zeta^h_x$ are the horizontal lifts at $x$ of $\xi,\zeta$, respectively, is a Riemannian quotient manifold of $\mathcal{M}$ (i.e., $\pi :
\mathcal{M} \to \mathcal{M} / \liegroup$ is a Riemannian submersion) \cite[Sec. 3.6.2]{optmatrixmanifold}. 

In summary, by using this Riemannian metric and restricting our
optimization to only consider horizontal tangent vectors, we can
implicitly formulate our algorithms on the abstract quotient space by
working with the concrete HT parameters. Below, we will derive the
Riemannian gradient in this context.

\begin{remark}
It should be noted that although the horizontal space
(\ref{eq:horizspace}) is complementary to the vertical space
(\ref{eq:vertspace}), it is demonstrably \emph{not} perpendicular to
$\vertspace$ under the Riemannian metric
(\ref{eq:riemmetric}). Choosing a horizontal space which is
perpendicular to $\vertspace$ under the standard Euclidean product
(i.e., (\ref{eq:riemmetric}) when $x$ is orthogonalized) is beyond the
scope of this paper. Suffice to say, it can be done, as a
generalization of the approach outlined in \cite{prevwork:subspacemc},
resulting in a series of symmetry conditions on various multi-way
combinations of parameters. The resulting projection operators involve
solving a number of coupled Lyapunov equations, increasing with the
depth of $T$. It remains to be
seen whether such equations can be solved efficiently when $d$ is
large. We will not dwell on this point here, as we will not be needing
orthogonal projections for our computations in the following.
\end{remark}

\subsection{Riemannian gradient}
\label{sec:riemanniangradient}
The problem we are interested in solving is 
\begin{align*}
\min_{x \in \paramspaceclean} f(\phi(x)) 
\end{align*}
for a smooth objective function $f : \fullspace \to  \mathbb{R}$. We write $\hat{f} : \paramspaceclean \to \mathbb{R}$, where $\hat{f}(x) = f(\phi(x))$. 

We need to derive expressions for the Riemannian
gradient to update the HT parameters as part of local optimization
procedures. Therefore, our primary quantity of interest is the
\emph{Riemannian gradient} of $\hat{f}$.
\begin{defn} \cite[Sec. 3.6]{optmatrixmanifold}
  Given a smooth scalar function $\hat{f}$ on a Riemannian manifold
  $\mathcal{N}$, the \emph{Riemannian gradient} of $\hat{f}$ at $x \in \mathcal{N}$,
  denoted $\nabla^R \hat{f}(x)$, is the unique element of
  $\mathcal{T}_x \mathcal{N}$ which satisfies
\begin{align*}
  g_x( \nabla^R \hat{f}(x), \xi ) = D\hat{f}(x)\left[\xi\right] \quad \forall \xi \in \mathcal{T}_x \mathcal{N}
\end{align*}
with respect to the Riemannian metric $g_x( \cdot, \cdot )$.
\end{defn}
Our manifold of interest in this case is $\mathcal{N} = 
\quotspace$, with the corresponding horizontal space
$\horizspace$ in lieu of the abstract tangent space $T_{\pi(x)} \quotspace$. Therefore, in the above equation, we can consider the 
horizontal lift $\xi^h$ of the tangent vector $\xi$ and instead write
\begin{align*}
g_x( \nabla^R \hat{f}(x), \xi^h ) = D\hat{f}(x)[\xi^h].
\end{align*}
Our derivation is similar to that of \cite[Sec 6.2.2]{htuckgeom}, except our derivations are more streamlined and cheaper computationally since we reduce the operations performed at the interior nodes $t \in T \setminus L$. By a slight abuse of notation in this section, we denote variational quantities associated to node $t$ as $\delta Z_t \in \mathbb{R}^{n_{t_l}n_{t_r} \times k_t}$ and $\delta \tensor{Z}_t \in \mathbb{R}^{n_{t_l} \times n_{t_r} \times k_t}$ where $(\delta Z_t)_{(1,2)} = \delta \tensor{Z}_t$ is the reshaping of $\delta Z_t$ in to a $3-$tensor. The Riemannian gradient will be denoted $(\delta U_t, \delta \tensor{B}_t)$ and a general horizontal vector will be denoted by $(\delta V_t, \delta \tensor{C}_t)$.

Since $x = (U_t, \tensor{B}_t)$ is orthogonalized, we use $\langle \cdot, \cdot \rangle$ to denote the Euclidean inner product. By the chain rule, we have that, for any $\xi = (\delta V_t, \delta \tensor{C}_t) \in \horizspace$, 
\begin{align*}
D\hat{f}(x)[\xi] &= Df(\phi(x))[D\phi(x)[\xi]]\\
&= \langle \nabla_{\phi(x)} f(\phi(x)), D\phi(x)[\xi] \rangle.
\end{align*}
Then each tensor $\delta \tensor{V}_t \in \mathbb{R}^{n_{t_l} \times n_{t_r} \times k_t}$, with $\delta V_{\troot} = D\phi(x)[\xi]$, satisfies the recursion  
\begin{align}
\label{eq:dphi}
\delta \tensor{V}_t = \delta V_{t_l} \times_1 U_{t_r} \times_2 \tensor{B}_t + U_{t_l} \times_1
\delta V_{t_r} \times_2 \tensor{B}_t + U_{t_l} \times_1 U_{t_r} \times_2 \delta \tensor{C}_t,
\end{align}
for matrices $\delta V_{t_l} \in \mathbb{R}^{n_{t_l} \times k_{t_l}}$, $\delta V_{t_r} \in \mathbb{R}^{n_{t_r} \times k_{t_r}}$ and tensor $\tensor{\delta
C_t} \in \mathbb{R}^{k_{t_l} \times k_{t_r} \times k_t }$ satisfying \cite[Lemma 2]{htuckgeom}
\begin{align}
\label{eq:orthogonalitycontraints}
  \delta V_{t_l}^T U_{t_l} = 0 \quad \delta V_{t_r}^T U_{t_r} = 0
  \quad (\delta C_t^{(1,2)})^T B_t^{(1,2)} = 0.
\end{align}
The third orthogonality condition is omitted when $t = \troot$. 

Owing to this recursive structure, we compute $\langle \delta \tensor{U}_t, \delta \tensor{V}_t \rangle$, where $\delta \tensor{U}_t$ is the component of the Riemannian gradient at the current node and recursively extract the components of the Riemannian gradient associated to the children, i.e., $\delta U_{t_l}, \delta U_{t_r}$, and $\delta \tensor{B}_t$. Here we let $\delta U_{\troot} = \nabla_{\phi(x)} f(\phi(x))$ be the Euclidean gradient of $f(\phi(x))$ at $\phi(x)$, reshaped into a matrix of size $n_{(\troot)_l} \times n_{(\troot)_r}$. 

Let $\dfrac{\partial U_t}{\partial U_{t_l}}$ be the linear operator
such that $\dfrac{\partial U_t}{\partial U_{t_l}} \delta V_{t_l} =
\delta V_{t_l} \times_1 U_{t_r} \times_2 \tensor{B}_t$ and similarly for
$\dfrac{\partial U_t}{\partial U_{t_l}}$, $\dfrac{\partial
  U_t}{\partial \tensor{B_t}}$.\\
Then $\langle \delta \tensor{U}_t, \delta \tensor{V}_t \rangle$ is equal to
\begin{align}
\label{eq:dotproddecomp}
\langle \delta \tensor{U}_t, \dfrac{\partial U_t}{\partial U_{t_l}} \delta V_{t_l}
\rangle + \langle  \delta \tensor{U}_t, \dfrac{\partial U_t}{\partial U_{t_r}} \delta
V_{t_r} \rangle + \langle  \delta \tensor{U}_t, \dfrac{\partial U_t}{\partial \tensor{B}_t} \tensor{\delta
C}_t \rangle.
\end{align}
If we set 
\begin{align}
\label{eq:recursivegrad}
\delta U_{t_l} = P_{U_{t_l}}^{\perp} \left(\dfrac{\partial U_t}{\partial U_{t_l}}\right)^T \delta \tensor{U}_t, \; \delta U_{t_r} = P_{U_{t_r}}^{\perp} \left(\dfrac{\partial U_t}{\partial U_{t_r}}\right)^T \delta \tensor{U}_t, \;
\delta \tensor{B}_t	 = \left(P_{M}^{\perp} \left( \left(\dfrac{\partial U_t}{\partial B_t}\right)^T \delta \tensor{U}_t \right)^{(1,2)}\right)_{(1,2)}
\end{align}
where 
\begin{align*}
  P_M^{\perp} &= I_{k_t} - B_t^{(1,2)} (B_t^{(1,2)})^T \quad & \text{ if } t \neq \troot \\
  P_M^{\perp} &= I \quad & \text{ if } t = \troot
\end{align*}
and $P_{U_t} = (I_{k_t} - U_t U_t^T)$ is the usual projection on to $\text{span}(U_t)^{\perp}$, then we have that (\ref{eq:dotproddecomp}) is equal to %
\begin{align*}
  \langle \delta U_{t_l}, \delta V_{t_l} \rangle + \langle \delta U_{t_r},
  \delta V_{t_r} \rangle + \langle \delta \tensor{B}_t, \delta \tensor{C}_t \rangle
\end{align*}
and $\delta U_{t_l}, \delta U_{t_r}$, and $\delta \tensor{B}_t$ satisfy (\ref{eq:orthogonalitycontraints}). Their recursively
decomposed factors will therefore be in the horizontal space $\horizspace$.

$\delta \tensor{B}_t$ is the component of the Riemannian gradient at node $t$. If $t_l$ is a leaf node, then we have extracted the component of the Riemannian gradient associated to $t_l$, namely $\delta U_{t_l}$. Otherwise, we set $\delta \tensor{U}_{t_l} = (\delta U_{t_l})_{(1,2)}$ and apply the above recursion. We make the same considerations for the right children. 

We compute the adjoint partial derivatives via
\begin{align}
\label{eq:adjpartialderivatives}
  \dfrac{\partial U_t}{\partial U_{t_l}}^T \delta \tensor{U}_t = \langle U_{t_r}^T \times_2 \delta \tensor{U}_t, \tensor{B}_t \rangle_{(2, 3), (2,3)} , \;
  \dfrac{\partial U_t}{\partial U_{t_r}}^T \delta \tensor{U}_t = \langle U_{t_l}^T \times_1 \delta \tensor{U}_t, \tensor{B}_t \rangle_{(1, 3), (1,3)}, \;
  \dfrac{\partial U_t}{\partial \tensor{B}_t}^T \delta \tensor{U}_t =
  U_{t_l}^T \times_1 U_{t_r}^T \times_2 \delta \tensor{U}_t
\end{align}
For the general case of computing these adjoint operators, we refer to \ref{app:adjops}. In the above computations, the multilinear product operators are never formed explicitly and instead each operator is applied to various reshapings of the matrix or tensor of interest, see \cite{spottoolbox} for a reference Matlab implementation.

In order to minimize the number of computations performed on intermediate tensors, which are much larger than $\text{dim}(\paramspaceclean)$, we first note that in computing the terms
\begin{align*}
P_{U_{t_l}}^{\perp} \langle U_{t_r}^T \times_2 \delta \tensor{U}_t, \tensor{B}_t \rangle_{(2,3),(2,3)},
\end{align*}
that $\delta \tensor{U}_t = (P_{U_t}^{\perp} \delta \tilde{U}_t)_{(1,2)}$ for a matrix $\delta \tilde{U}_t \in \mathbb{R}^{n_{t_l} n_{t_r} \times k_t}$. Using (\ref{eq:multilinearproperty}), the above expression can be written as 
\begin{align}
\label{eq:simplifiedderivative}
 \langle P_{U_{t_l}}^{\perp} \times_1 U_{t_r}^T \times_2 (P_{U_t}^{\perp} \delta \tilde{U}_t)_{(1,2)}, \tensor{B}_t \rangle_{(2,3),(2,3)}.
\end{align}
We note that in the above, $P_{U_{t_l}}^{\perp} \times_1 U_{t_r}^T \times_2 (P_{U_t}^{\perp} \delta \tilde{U}_t)_{(1,2)} = (U_{t_r}^{T} \otimes P_{U_{t_l}}^{\perp} P_{U_t}^{\perp} \delta \tilde{U}_t)_{(1,2)}$, and the operator applied to $\delta \tilde{U_t}$ satisfies
\begin{align*}
U_{t_r}^T \otimes P_{U_{t_l}}^{\perp} P^{\perp}_{U_t} &= U_{t_r}^T \otimes P_{U_{t_l}}^{\perp} (I_{n_t} - U_t U_t^T) \\
&= U_{t_r}^T \otimes P_{U_{t_l}}^{\perp} (I_{n_t} - U_{t_r} \otimes U_{t_l} B_{t}^{(1,2)} (B_t^{(1,2)})^T U_{t_r}^T \otimes U_{t_l}^T)\\
&= U_{t_r}^T \otimes P_{U_{t_l}}^{\perp}.
\end{align*}
This means that, using (\ref{eq:multilinearproperty}), we can write (\ref{eq:simplifiedderivative}) as 
\begin{align*}
P_{U_{t_l}}^{\perp} \langle U_{t_r}^T \times_2 (\delta \tilde{U}_t)_{(1,2)}, \tensor{B}_t \rangle_{(2,3),(2,3)}
\end{align*}
i.e., we do not have to apply $P_{U_t}^{\perp}$ to the matrix $\delta \tilde{U}_t$ at the parent node of $t$. Applying this observation recursively and to the other terms in the Riemannian gradient, we merely need to orthogonally project the resulting extracted parameters $(\delta U_t, \delta \tensor{B}_t)$ on to $\horizspace$ after applying the formula (\ref{eq:recursivegrad}) \emph{without} applying the intermediate operators $P^{\perp}_{U_t}$, reducing the overall computational costs. We summarize our algorithm for computing the Riemannian gradient in \Cref{alg:riemmanngrad}.

\begin{algorithm}
\caption{The Riemannian gradient $\nabla^R f$ at a point $x = (U_t, \tensor{B}_t) \in \paramspaceclean$}
\label{alg:riemmanngrad}
\begin{algorithmic}
\REQUIRE $x = (U_t, \tensor{B}_t)$ parameter representation of the current point
\STATE Compute $\tensor{X} = \phi(x)$ and $\nabla_{\tensor{X}} f(\tensor{X})$, the Euclidean gradient of $f$, a $n_1 \times \dots n_d$ tensor.
\STATE $\delta U_{\troot} \gets (\nabla_{\tensor{X}} f(\mathbf{X}))_{(1,2)}$
\FOR{$t \in T \setminus L$, visiting parents before their children}
\STATE $\delta \tensor{U}_t \gets (\delta U_t)_{(1,2)}$
\STATE $\delta U_{t_l} \gets \langle U_{t_r}^T \times_2 \delta \tensor{U}_t, \tensor{B}_t \rangle_{(2,3),(2,3)} $, $\delta U_{t_r} \gets \langle U_{t_l}^T \times_1 \delta \tensor{U}_t, \tensor{B}_t \rangle_{(1,3),(1,3)}$
\STATE $\delta \tensor{B}_t \gets U_{t_l}^T \times_1 U_{t_r}^T \times_2 \delta \tensor{U}_t$
\IF{$t \neq \troot$}
\STATE $\tensor{\delta B_t} \gets (P_{B_t^{(1,2)}}^{\perp} (\delta B_t)^{(1,2)} )_{(1,2)}$ 
\ENDIF
\ENDFOR
\FOR{$t \in L$}
\STATE $\delta U_t \gets P_{U_t}^{\perp} \delta U_t$
\ENDFOR
\RETURN $\nabla^R f \gets (\delta U_t, \delta \tensor{B}_t)$
\end{algorithmic}
\end{algorithm}
\Cref{alg:riemmanngrad} is computing the operator
$D\phi(x)^* : \mathcal{T}_{\phi(x)} \mathcal{H} \to \horizspace $ applied to the Euclidean gradient $\nabla_{\phi(x)} f(\phi(x))$. The
forward operator $D\phi(x) : \horizspace \to \mathcal{T}_{\phi(x)}
\mathcal{H}$ can be computed using a component-wise orthogonal
projection $P^H_x : \mathcal{T}_x \paramspaceclean \to \horizspace $ followed by
applying (\ref{eq:dphi}) recursively.

\subsection{Tensor Completion Objective and Gradient }
In this section, we specialize the computation of the objective and Riemannian gradient in the HT format to the case where the Euclidean gradient of the objective function is sparse, in particular for tensor completion. This will allow us to scale our method to high dimensions in a straightforward fashion as opposed to the inherently dense considerations in Algorithm \ref{alg:riemmanngrad}. Here for simplicity, we suppose that our dimension tree $T$ is \emph{complete}, that is a full binary tree up to level $\text{depth}(T)-1$ and all of the leaves at level $\text{depth}(T)$ are on the leftmost side of $T$, as in Figure~\ref{fig:dimtree}. This will ease the exposition as well as allow for a more efficient implementation compared to a noncomplete tree. 

We consider a separable, smooth objective function on the HT manifold,
\begin{align}
\label{eq:separableopt}
\hat{f}(x) = f(\phi(x)) =  \sum_{\mathbf{i} \in \Omega} f_{\mathbf{i}}( \phi(x)_{\mathbf{i}} ),
\end{align}
where $f_{\mathbf{i}} : \mathbb{R} \to \mathbb{R}$ is a smooth, single variable function. For the least-squares tensor completion problem, $f_\mathbf{i}(a) = \frac{1}{2}(a - b_{\mathbf{i}})^2$.

We denote $\mathbf{i} = (i_1, i_2, \dots, i_d)$ and let $\mathbf{i}_{t}$ be the subindices of $\mathbf{i}$ indexed by $t \in T$. In this section, we also use the Matlab notation for indexing in to matrices, i.e., $A(m,n)$ is the $(m,n)$th entry of $A$, and similarly for tensors. Let $K = \max_{t \in T} k_t$. 
\subsubsection{Objective function}
With this notation in mind, we write each entry of $P_{\Omega} \phi(x)$, indexed by $\mathbf{i} \in \Omega$, as 
\begin{align*}
(P_{\Omega} \phi(x))(\mathbf{i}) &= \sum_{r_l = 1}^{k_{t_l}} \sum_{r_r = 1}^{k_{t_r}} (U_{t_l})(\mathbf{i}_{t_l}, r_l) \cdot (U_{t_r})(\mathbf{i}_{t_r},r_r) \cdot B_{\troot}(r_l,r_r), \quad \text{where } t = \troot.
\end{align*}
Each entry of $U_{t_l}, U_{t_r}$ can be computed by applying the recursive formula (\ref{eq:phix}), i.e.,
\begin{align*}
U_{t}(\mathbf{i}_t,r) = \sum_{r_{l}=1}^{k_{t_l}} \sum_{r_{r}=1}^{k_{t_r}} (U_{t_l})(\mathbf{i}_{t_l},r_l) \cdot (U_{t_r})(\mathbf{i}_{t_r},r_{r}) \cdot \tensor{B}_t(r_l, r_r, r)
\end{align*}
with the substitutions of $t\to t_l, t_r$ as appropriate. 

At each node $t \in T$, we perform at most $K^3$ operations and therefore the computation of $P_\Omega \phi(x)$ requires at most $2|\Omega|dK^3$ operations. The least squares objective, $\frac{1}{2} \| P_{\Omega} \phi(x) - b \|_2^2$, can be computed in $|\Omega|$ operations. 

\subsubsection{Riemannian gradient}
The Riemannian gradient is more involved, notation-wise, to derive explicitly compared to the objective, so in the interest of brevity we only concentrate on the recursion for computing $\delta U_1$ below.

We let $\tensor{Z} = \nabla_{\phi(x)} f(\phi(x))$ denote the Euclidean gradient of $f(\tensor{X})$ evaluated at $\tensor{X} = \phi(x)$, which has nonzero entries $\tensor{Z}(\mathbf{i})$ indexed by $\mathbf{i} \in \Omega$. By expanding out (\ref{eq:adjpartialderivatives}), for each $\mathbf{i} \in \Omega$, $\delta U_{t_l}$ evaluated at the root node with coordinates $\mathbf{i}_{t_l}, r_{l}$ for $r_{l} = 1, \dots, k_{t_l}$ is
\begin{align*}
\delta U_{t_l}(\mathbf{i}_{t_l}, r_{l}) = \sum_{\mathbf{i} = (\mathbf{i}_{t_l}, \mathbf{i}_{t_r}) \in \Omega} \tensor{Z}(\mathbf{i}) \sum_{r_r = 1}^{k_{t_r}} U_{t_r}(\mathbf{i}_{t_r}, r_r) B_{\troot}(r_{l}, r_{r}), \quad \text{ where } t = \troot.
\end{align*}
For each $t \in T \setminus L \cup \troot$, we let $\widetilde{\delta U_{t_l}}$ denote the length $k_{t_l}$ vector, which depends on $\mathbf{i}_{t}$, satisfying, for each $\mathbf{i} \in \Omega, r_{l} = 1, ..., k_{t_l}$,
\begin{align*}
(\widetilde{\delta U_{t_l}})(\mathbf{i}_t, r_{t_l}) = \sum_{r_{r} = 1}^{k_{t_r}} \sum_{r_t = 1}^{k_t} U_{t_r}(\mathbf{i}_{t_r}, r_{r}) \tensor{B}_{t}(r_{l}, r_{r}, r_t)\widetilde{\delta U_{t}}(\mathbf{i}_{t}, r_{t}).
\end{align*}

This recursive construction above, as well as similar considerations for the right children for each node, yield Algorithm \ref{alg:riemmgradsparse}. For each node $t \in T \setminus \troot $, we perform $3|\Omega|K^3$ operations and at the root where we perform $3|\Omega|K^2$ operations. The overall computation of the Riemannian gradient requires at most $6d|\Omega|K^3$ operations, when $T$ is complete, and a negligible $O(dK)$ additional storage to store the vectors $\widetilde{\delta U_t}$ for each fixed $\mathbf{i} \in \Omega$. The computations above are followed by componentwise orthogonal projection on to $\horizspace$, which requires $O(d(NK^2 + K^4))$ operations and are dominated by the $O(d|\Omega|K^3)$ time complexity when $|\Omega|$ is large.

Therefore for large $|\Omega|$, each evaluation of the objective, with or without the Riemannian gradient, requires $O(d |\Omega|K^3)$ operations. Since $f(\tensor{X})$ exhibits this separable structure and the parameters $x = (U_t, \tensor{B}_t)$ are typically very small, it is straightforward to compute the objective and its gradient in an embarrassingly parallel manner for very large problems. 

By comparison, the gradient in the Tucker tensor completion case \cite{tuckercompletion} requires $O(d(|\Omega| + N)K^d + K^{d+1} )$ operations, which scales much more poorly when $d\ge 4$ compared to using Algorithm \ref{alg:riemmgradsparse}. This discrepancy is a result of the structural differences between Tucker and Hierarchical Tucker tensors, the latter of which allows one to exploit  additional low-rank behaviour of the core tensor in the Tucker format.

In certain situations, when say $|\Omega| = pN^d$ for some $p \in [10^{-3},1]$ and $d$ is sufficiently small, say $d = 4, 5$, it may be more efficient from a computer hardware point of view to use the dense linear algebra formulation in \Cref{alg:riemmanngrad} together with an efficient dense linear algebra library such as BLAS, rather than \Cref{alg:riemmgradsparse}. The dense formulation requires $O(N^d K)$ operations when $T$ is a balanced tree, which may be smaller than the $O(d|\Omega|K^3)$ operations needed in this case. 

\begin{algorithm}
\caption{ Objective \& Riemannian gradient for separable objectives }
\label{alg:riemmgradsparse}
\begin{algorithmic}
\REQUIRE $x = (U_t, \tensor{B}_t)$ parameter representation of the current point
\STATE $f_x \gets 0$, $\delta U_t, \delta \tensor{B}_t \gets 0$, $\widetilde{\delta U_t} \gets 0$
\FOR {$\mathbf{i} \in \Omega$}
\FOR {$t \in T \setminus L$, visiting children before their parents }
\FOR{$z = 1, 2, \dots, k_t$ }
\STATE $U_t(\mathbf{i}_t,z) \gets \sum_{w=1}^{k_l} \sum_{y=1}^{k_r} (U_{t_l})(\mathbf{i}_{t_l}, w) \cdot (U_{t_r})(\mathbf{i}_{t_r}, y) \cdot \tensor{B}_t(w, y, z) $
\ENDFOR
\ENDFOR
\STATE $f_x \gets f_x + f_{\mathbf{i}}( U_{\troot}(\mathbf{i}) )$
\STATE $\widetilde{\delta U_{\troot}} \gets \nabla f_{\mathbf{i}}( U_{\troot}(\mathbf{i}) )$
\FOR {$t \in T \setminus L$, visiting parents before their children }
\FOR {$w = 1, \dots, k_{t_l}$, $y = 1, \dots, k_{t_r}$, $z = 1, \dots, k_{t}$ } 
\STATE $\delta \tensor{B}_t (w, y, z) \gets \delta \tensor{B}_t(w, y, z) + \widetilde{\delta U_t}( z )  \cdot (U_{t_l})( \mathbf{i}_{t_l} , w ) \cdot (U_{t_r}) (\mathbf{i}_{t_r}, y) $
\ENDFOR
\FOR{$w =1,\dots, k_{t_l}$}
\STATE $\widetilde{\delta  U_{t_l} }( w ) \gets \sum_{y=1}^{k_{t_r}} \sum_{z=1}^{k_t} (U_{t_r})(\mathbf{i}_{t_r}, y) \cdot \tensor{B}_{t}(w, y, z) \cdot \widetilde{ \delta U_t } (z) $
\ENDFOR
\FOR{$y=1,\dots,k_{t_r}$}
\STATE $\widetilde{\delta U_{t_r}}( y ) \gets \sum_{w=1}^{k_{t_l}} \sum_{z=1}^{k_t} (U_{t_l})(\mathbf{i}_{t_l}, w) \cdot \tensor{B}_{t}(w, y, z) \cdot \widetilde{ \delta U_t } (z) $
\ENDFOR
\ENDFOR
\FOR{$t \in L$ }
\FOR{$z = 1, 2, \dots, k_t$ }
\STATE $\delta U_t( \mathbf{i}_t, z) \gets \delta U_t( \mathbf{i}_t, z) + \widetilde{\delta U_t}(z) $
\ENDFOR
\ENDFOR
\STATE Project $(\delta U_t, \delta \tensor{B}_t)$ componentwise on to $\horizspace$ 
\ENDFOR
\RETURN $f(x) \gets f_x$, $\nabla^R f(x) \gets (\delta U_t, \delta \tensor{B}_t)$
\end{algorithmic}
\end{algorithm}

\section{Optimization}
\label{sec:optimization}
\subsection{Reorthogonalization as a retraction}
The exponential mapping on a Riemannian manifold captures the notion
of ``minimal distance'' movement in a particular tangent
direction. Although it has many theoretically desirable properties,
the exponential mapping is often numerically difficult or expensive to
compute as it involves computing matrix exponentials or
solving ODEs. The strict, distance-minimizing properties of the
exponential mapping can be relaxed, while still preserving
algorithmic convergence, resulting in the notion of a
\emph{retraction} on a manifold.

\begin{defn} 
\label{def:retraction}
A retraction on a manifold $\mathcal{N}$ is a smooth mapping $R$ from
the tangent bundle $\mathcal{T}\mathcal{N}$ onto $\mathcal{N}$ with the following properties: Let
$R_x$ denote the restriction of $R$ to $\mathcal{T}_x \mathcal{N}$.
\begin{itemize}
\item $R_x(0_x) = x$, where $0_x$ denotes the zero element of $\mathcal{T}_x
  \mathcal{N}$
\item With the canonical identification $\mathcal{T}_{0_x} \mathcal{T}_x \mathcal{N}
  \simeq \mathcal{T}_x \mathcal{N}$, $R_x$ satisfies
\begin{align*}
  DR_x(0_x) = \text{id}_{\mathcal{T}_x \mathcal{N}}
\end{align*}
where $\text{id}_{\mathcal{T}_x \mathcal{N}}$ denotes the identity mapping on
$\mathcal{T}_x \mathcal{N}$ (Local rigidity condition).
\end{itemize}
\end{defn}
A retraction approximates the action of the exponential mapping to
first order and hence much of the analysis for algorithms utilizing
the exponential mapping can also be immediately carried over to those
using retractions.

A computationally feasible retraction on the HT parameters is given by
QR- or square-root-based reorthogonalization (\ref{app:sqrt}). The QR-based orthogonalization of (potentially nonorthogonal) parameters $x = (U_t, \tensor{B}_t)$, denoted $QR(x)$, is given in \Cref{alg:qr} \cite[Alg. 3]{htuckersvd}. 

\begin{proposition} \label{prop:retraction} Given $x \in \paramspaceclean$, $\eta \in \mathcal{T}_x \paramspaceclean$, let $\text{QR}(x)$ be the QR-based orthogonalization defined in \Cref{alg:qr}. Then $R_x(\eta) := \text{QR}(x + \eta) $ is a retraction on $\paramspaceclean$.
\end{proposition}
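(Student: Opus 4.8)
The plan is to verify the two defining properties of a retraction from \Cref{def:retraction} directly for the map $R_x(\eta) = \text{QR}(x + \eta)$, treating $\text{QR}$ as the orthogonalization procedure of \Cref{alg:qr}. The first step is to establish smoothness: the map $\eta \mapsto x + \eta$ is trivially smooth, so it suffices to argue that $\text{QR}$ is smooth on a neighborhood of $x$ in the full (nonorthogonalized) parameter space. The essential observation here is that $\text{QR}$ operates by computing, at each node $t$ visited from the leaves upward, a thin QR factorization of the relevant matrix ($U_t$ at the leaves, $B_t^{(1,2)}$ at interior nodes) and pushing the triangular factor $R$ into the parent $\tensor{B}$-tensor. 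Since $x = (U_t, \tensor{B}_t) \in \paramspaceclean$ already satisfies the full-rank conditions of \Cref{def:phix}, for sufficiently small $\eta$ the perturbed matrices $U_t + \delta U_t$ and the perturbed $B_t^{(1,2)}$ remain full rank, and the thin QR factorization (with a fixed sign convention on the diagonal of $R$ to remove the gauge freedom) is a smooth function of a full-rank matrix. Composing these smooth maps over the tree gives smoothness of $R_x$ near $0_x$.

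The second step is the centering condition $R_x(0_x) = x$: since $x$ is already in OHT form, every QR step acts on a matrix with orthonormal columns, so each $Q$ equals that matrix and each $R$ equals the identity, leaving $x$ unchanged. This should follow immediately from the uniqueness of the thin QR factorization under the chosen sign convention, together with the orthogonality relations (\ref{eq:oht}).

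The third and most delicate step is the local rigidity condition $DR_x(0_x) = \text{id}_{\mathcal{T}_x\paramspaceclean}$. The plan is to differentiate $\text{QR}(x + t\eta)$ at $t = 0$ for an arbitrary tangent vector $\eta \in \mathcal{T}_x \paramspaceclean$ and show the derivative returns $\eta$ itself. The key structural fact to exploit is that the tangent space to the Stiefel manifold (and analogously to $S_{k_{t_l},k_{t_r},k_t}$) decomposes relative to the QR retraction in the standard way: writing $\delta U_t = U_t\Omega + U_t^\perp K$ with $\Omega$ skew-symmetric as in the description of $\mathcal{T}_Y\text{St}(n,p)$ given in the excerpt, the differential of the QR-factorization map at an orthonormal point extracts precisely the tangent component and discards the symmetric part that the $R$-factor would absorb. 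I would argue node by node, from the leaves up, that the linearization of each QR step reproduces the corresponding component of $\eta$; the triangular factors $R_t$ differentiate to upper-triangular increments whose symmetric parts are exactly cancelled by the differential of the $Q$-factor, leaving the skew part intact, so the net effect on each tangent component is the identity.

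The main obstacle I anticipate is the bookkeeping in this third step, specifically tracking how the linearized triangular factors propagate up the tree into the parent $\tensor{B}$-tensors and confirming they do not contaminate the tangent components at ancestor nodes. Concretely, when a perturbation at node $t$ produces an increment in $R_t$, that increment is multiplied into $\tensor{B}_{\text{parent}(t)}$, and one must check that at $t = 0$ this contribution either vanishes to first order or is precisely the component of $\eta$ already assigned to the parent. The cleanest way to manage this is to appeal to the known QR-retraction differential for a single Stiefel/orthonormal factor (the fact that $D\,\text{qf}(Y)[Y\Omega + Y^\perp K] = Y\Omega + Y^\perp K$ for the orthonormal factor when $Y$ has orthonormal columns, with the symmetric remainder absorbed by the $R$-factor) and then to invoke the recursive, node-local structure of \Cref{alg:qr}, so that the global differential factors as a product of these local identities. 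Given the isometry and invariance properties already established, it may also be possible to shortcut the explicit computation by recognizing $R_x$ as a product of standard Stiefel QR retractions, for which the retraction property is classical, adapted to the orthogonality constraint on the $B$-tensors; I would note this as the most economical route and carry out the leaf/interior/root cases only to the extent needed to confirm the constraints (\ref{eq:oht}) are preserved and the first-order identity holds.
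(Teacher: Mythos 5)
Your proposal is correct and follows essentially the same route as the paper's proof: verify $R_x(0_x)=x$ from the fact that QR fixes orthonormal factors, then establish local rigidity by differentiating the recursive QR algorithm from the leaves upward, using the standard Stiefel QR-retraction differential and the skew-symmetry of $U_t^T\delta U_t$ and $(B_t^{(1,2)})^T\delta B_t^{(1,2)}$ for tangent vectors to conclude that the $R$-factor derivatives vanish at first order, so nothing contaminates the parent nodes. The paper makes this explicit by showing $R_t'(0)=\rho_{UT}(X^T\delta X)=0$ for all $t\in T\setminus\troot$, which is exactly the "vanishes to first order" branch of the alternative you anticipated in your bookkeeping step.
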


We refer to \ref{proof:qrretraction} for the proof of this
proposition. 

As before for the Riemannian metric, we can treat the retractions on
the HT parameter space as implicitly being retractions on the quotient
space as outlined below. \\
Since $R_x(\eta)$ is a retraction on the parameter space
$\paramspaceclean$, and our horizontal space is invariant under the
Lie group action, by the discussion in
\cite[4.1.2]{optmatrixmanifold}, we have the following

\begin{proposition}
The mapping 
\begin{align*}
  R_{\pi(X)}(z) = \pi(R_X(Z))
\end{align*}
is a retraction on $\quotspace$, where $R_X(Z)$ is the QR or
square-root based retraction (\ref{app:sqrt}) previously defined on
$\paramspaceclean$, $\pi : \paramspaceclean \to \quotspace$ is the
projection operator, and $Z$ is the horizontal lift at $X$ of the
tangent vector $z$ at $\pi(X)$.
\end{proposition}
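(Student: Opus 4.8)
The plan is to verify the three defining properties of a retraction (\Cref{def:retraction}) for $R_{\pi(X)}$ on $\quotspace$, following the general construction of retractions on quotient manifolds in \cite[Sec. 4.1.2]{optmatrixmanifold}. The essential prerequisite, and the step I expect to be the crux, is \emph{well-definedness}: the value $\pi(R_X(Z))$ must be independent of the representative $X$ chosen in the fiber $\pi^{-1}(\pi(X))$, where $Z$ is the horizontal lift of $z$ at $X$. Once this is established, the centering and local rigidity conditions follow essentially formally from the corresponding properties of $R_X$ proved in \Cref{prop:retraction}, and smoothness follows from $\pi$ being a submersion.

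For well-definedness, I would fix a second representative $X' = \theta_{\mathcal{A}}(X)$ with $\mathcal{A} \in \liegroup$ and let $Z, Z'$ be the horizontal lifts of $z$ at $X, X'$. First I would record that the horizontal lift is equivariant: since $\pi \circ \theta_{\mathcal{A}} = \pi$ and the horizontal distribution is $\theta$-invariant by (\ref{eq:horizspacetheta}), the vector $D\theta(X,\mathcal{A})[Z]$ is horizontal at $X'$ and projects to $z$ under $D\pi(X')$, so by uniqueness of horizontal lifts $Z' = D\theta(X,\mathcal{A})[Z]$. Next I would observe from (\ref{eq:groupaction}) that $\theta_{\mathcal{A}}$ acts \emph{linearly} on the parameter components (right multiplication on the $U_t$ and a fixed multilinear product on the $\tensor{B}_t$), so its differential coincides with the same linear action on tangent vectors, exactly as in the computation of $\theta^*\eta_x$ in the proof of \Cref{prop:isometric}. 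Consequently $X' + Z' = \theta_{\mathcal{A}}(X) + D\theta(X,\mathcal{A})[Z] = \theta_{\mathcal{A}}(X + Z)$.

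It then remains to show that $\text{QR}(\theta_{\mathcal{A}}(Y))$ and $\text{QR}(Y)$ lie in the same fiber for any (not necessarily orthogonalized) $Y$, applied here to $Y = X+Z$. This is where the invariance of $\phi$ under the group action does the work: since $\phi \circ \theta_{\mathcal{A}} = \phi$ and the orthogonalization of \Cref{alg:qr} preserves the underlying tensor, we get $\phi(\text{QR}(\theta_{\mathcal{A}}(Y))) = \phi(\theta_{\mathcal{A}}(Y)) = \phi(Y) = \phi(\text{QR}(Y))$, and because $\hat{\phi}$ is injective on $\quotspace$ this forces $\pi(\text{QR}(\theta_{\mathcal{A}}(Y))) = \pi(\text{QR}(Y))$. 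Chaining the two preceding observations yields $\pi(R_{X'}(Z')) = \pi(\text{QR}(X'+Z')) = \pi(\text{QR}(X+Z)) = \pi(R_X(Z))$, so $R_{\pi(X)}$ is well defined. I expect this fiber-invariance of reorthogonalization, resting on $\text{QR}$ preserving $\phi$, to be the only genuinely delicate point.

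With well-definedness secured, I would finish as follows. For centering, $R_{\pi(X)}(0_{\pi(X)}) = \pi(R_X(0_X)) = \pi(X)$, since $R_X(0_X) = X$ by \Cref{prop:retraction}. For local rigidity, writing $R_{\pi(X)} = \pi \circ R_X \circ \ell_X$ with $\ell_X : z \mapsto z^h_X$ the linear horizontal lift, the chain rule at the origin together with $DR_X(0_X) = \mathrm{id}$ gives, under the canonical identification of \Cref{def:retraction}, $DR_{\pi(X)}(0_{\pi(X)})[z] = D\pi(X)[z^h_X] = z$, the last equality being the defining property of the horizontal lift. Finally, $R_{\pi(X)}$ is smooth as a composition of the smooth maps $\ell_X$, $R_X$, and $\pi$, and, being constant on fibers by the well-definedness argument, it descends to a smooth map on the quotient because $\pi$ is a Riemannian submersion. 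All three retraction properties then hold, completing the argument as an instance of the standard quotient-retraction construction.
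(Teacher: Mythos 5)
Your proof is correct, and it follows the same route the paper takes: the standard construction of a quotient retraction from a total-space retraction, as in \cite[Sec.~4.1.2]{optmatrixmanifold}. The difference is one of completeness rather than of method. The paper's entire justification is the sentence preceding the proposition --- since $R_x$ is a retraction on $\paramspaceclean$ (\Cref{prop:retraction}) and the horizontal space is invariant under the group action (\ref{eq:horizspacetheta}), the result follows ``by the discussion in \cite[Sec.~4.1.2]{optmatrixmanifold}'' --- and it never explicitly verifies the hypothesis that construction actually requires, namely that $\pi(R_X(Z))$ is independent of the representative $X$ chosen in the fiber. Horizontal invariance alone does not give this; one also needs the retraction itself to be compatible with the group action. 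Your well-definedness argument supplies exactly that verification: equivariance of horizontal lifts, linearity of $\theta_{\mathcal{A}}$ yielding $X' + Z' = \theta_{\mathcal{A}}(X+Z)$, and fiber-invariance of reorthogonalization from the fact that \Cref{alg:qr} preserves $\phi$ together with injectivity of $\hat{\phi}$. (One small point you implicitly use, and which does hold: both $\theta_{\mathcal{A}}$ and $\phi$ extend to non-orthogonalized full-rank parameters such as $X+Z$, with $\phi \circ \theta_{\mathcal{A}} = \phi$ still valid there.) The remaining steps --- centering, local rigidity via the chain rule and $D\pi(X)[z^h_X]=z$, and smoothness --- are routine, as you say. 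In short, your write-up is a strictly more self-contained version of the argument the paper delegates to its citation.
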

\begin{algorithm}
\caption{QR-based orthogonalization}
\label{alg:qr}
\begin{algorithmic}
\REQUIRE HT parameters $x = (U_t, \tensor{B}_t)$
\RETURN $y = (V_t, \tensor{C}_t)$ orthogonalized parameters such that $\phi(x) = \phi(y)$
\FOR{$t \in L$}
\STATE $Q_t R_t = U_t$, where $Q_t$ is orthogonal and $R_t$ is upper triangular
\STATE $V_t \gets Q_t$
\ENDFOR
\FOR{$t \in T \setminus (L \cup \troot)$, visiting children before their parents}
\STATE $Z_t \gets R_{t_l} \times_1 R_{t_r} \times_2 \tensor{B}_t$
\STATE $Q_t R_t = Z_t^{(1,2)}$, where $Q_t$ is orthogonal and $R_t$ is upper triangular
\STATE $\tensor{C}_t \gets (Q_t)_{(1,2)}$
\ENDFOR
\STATE $C_{\troot} \gets R_{(\troot)_l} B_{\troot} R_{(\troot)_r}^T$
\end{algorithmic}
\end{algorithm}

\subsection{Vector transport}
Now that we have a method for ``moving'' in a particular direction along
the HT manifold, we need a means of mapping tangent vectors from one
point to another. For this purpose, we use the notion of vector
transport, which relaxes the isometry constraints of parallel transport to decrease computational complexity. Even
though we make this approximation, we still enjoy increased
convergence rates compared to steepest descent (see \cite[Sec.
8.1.1]{optmatrixmanifold} for more details).

Since our parameter space $\paramspaceclean$ is a subset of Euclidean
space, given a point $x \in \paramspaceclean$ and a horizontal vector $\eta_x \in \horizspace$, we take our vector transport $\mathcal{T}_{x,\eta_x} : \horizspace \to \mathcal{H}_{R_{x}(\eta_x)} \paramspaceclean$ of the vector $\xi_x \in \horizspace$ to be
\begin{align*}
  \mathcal{T}_{x,\eta_x} \xi_x := P_{R_{x}(\eta_x)}^h \xi_x
\end{align*} 
where $P^h_x$ is the component-wise projection onto the horizontal
space at $x$ \cite[Sec. 8.1.4]{optmatrixmanifold}. This mapping is
well defined on $\quotspace$ since $\horizspace$ is invariant under
$\theta$, and induces a vector transport on the quotient space.

\subsection{Smooth optimization methods}
Now that we have established the necessary components for manifold
optimization, we present a number of concrete optimization algorithms for solving 
\begin{align*}
\min_{x \in \paramspaceclean} f(\phi(x)).
\end{align*}

\subsubsection{First-order methods}
\label{sec:first-order-methods}
Given the expressions for the Riemannian gradient and retraction, it
is straightforward to implement the classical Steepest Descent
algorithm with an Armijo line search on this Riemannian manifold, specialized from the general Riemannian manifold case \cite{optmatrixmanifold} to the HT manifold in \Cref{alg:cg}. This
algorithm consists of computing the Riemannian gradient, followed by a
line search, HT parameter update, and a reorthogonalization. Since
this algorithm has a poor convergence rate, we rely on more
sophisticated optimization algorithms such as the nonlinear conjugate
gradient method as outlined in Algorithm \ref{alg:cg}. 

Here $g_i$ denotes the Riemannian gradient at iteration $i$ of the algorithm, $p_i$ is the search direction for the optimization method, and $\alpha_i$ is the step length.

We choose
the Polak-Ribiere approach
\begin{align*}
  \beta_i = \dfrac{\langle g_i, g_i -
    \mathcal{T}_{x_{i-1},\alpha_{i-1} p_{i-1}}(g_{i-1}) \rangle}{\langle
    g_{i-1}, g_{i-1} \rangle}
\end{align*}
to compute the CG-parameter $\beta_i$, so that the search direction $p_i$ satisfies 
\begin{align*}
p_i = - g_i + \beta_i \mathcal{T}_{x_{i-1},\alpha_{i-1} p_{i-1}} p_{i-1}
\end{align*}
and $p_1 = - g_1$.
\renewcommand{\algorithmiccomment}[1]{\hfill$\triangleright$ #1}
\begin{algorithm}
\caption{General Nonlinear Conjugate Gradient method for minimizing a function $f$ over $\htuckspaceclean$}
\label{alg:cg}
\begin{algorithmic}
\REQUIRE Initial guess $x_0 = (U_t, \tensor{B}_t)$, $0 < \sigma < 1$ sufficient decrease parameter for the Armijo line search, $0 < \theta < 1$ step size decrease parameter, $\gamma > 0$ CG restart parameter
\STATE $p_{-1} \gets 0$
\STATE $i \gets 0$
\FOR{$\itrvar = 0,1,2,\dots$ until convergence}
\STATE $\mathbf{X}_i \gets \phi(x_i)$
\STATE $f_i \gets f(\mathbf{X}_i)$
\STATE $g_i \gets \nabla^R \hat{f}(x_i)$ 
\COMMENT{ Riemannian gradient of $\hat{f}(x)$ at $x_i$ }
\STATE $s_{i} \gets \mathcal{T}_{x_{i-1},\alpha_{i-1} p_{i-1}} \alpha_{i-1}p_{i-1}$ 
\COMMENT{ Vector transport the previous search direction }
\STATE $y_i \gets g_i -  \mathcal{T}_{x_{i-1},\alpha_{i-1} p_{i-1}} g_{i-1} $
\STATE $L_i \gets y_i^T s_i / \|s_i\|^2$ 
\COMMENT{ Lipschitz constant estimate }
\STATE $p_i \gets -g_i + \beta_i \mathcal{T}_{x_{i-1},\alpha_{i-1} p_{i-1}} p_{i-1}$ 
\IF {$\langle p_i, g_i \rangle > -\gamma$ }
\STATE  $p_i = -g_i$ 
\COMMENT{ Restart CG direction }
\ENDIF
\IF {$y_i^T s_i > 0$}
\STATE $\alpha \gets - g_i^T p_i/(L_i \|p_i\|_2^2) $ 
\ELSE
\STATE $\alpha \gets \alpha_{i-1} $
\ENDIF
\STATE Find $m \in \mathbb{Z}$ such that $\alpha_i = \alpha \theta^m$ and
\STATE \qquad $f(x_i +  \alpha_i p_i) - f_i \le \sigma  \alpha_i g_i^T p_i$
\STATE \qquad $f(x_i +  \alpha_i p_i) < \min\{ f(x_i +  \alpha_i \theta p_i), f(x_k + \alpha_i \theta^{-1} p_i) \}$
\COMMENT{Find a quasi-optimal minimizer }
\STATE $x_{i+1} \gets R_{x_i}( \alpha_i p_i) $ 
\COMMENT{Reorthogonalize}
\STATE $i \gets i+1$
\ENDFOR
\end{algorithmic}
\end{algorithm}

\subsubsection{Line search}
As any gradient based optimization scheme, we need a good initial step
size and a computationally efficient line search. Following
\cite{Shi:2005gz}, we use a variation of the limited-minimization line
search approach to set the initial step length based on the previous
search direction and gradient that are vector transported to the
current point---i.e, we have
\begin{align*}
  s_i &= \mathcal{T}_{x_{i-1},\alpha_{i-1} p_{i-1}} \alpha_{i-1} p_{i-1}\\
  y_i &= g_i - \mathcal{T}_{x_{i-1},\alpha_{i-1} p_{i-1}} g_{i-1}.
\end{align*}
In this context, $s_i$ is the manifold analogue for the Euclidean difference between iterates, $x_i - x_{i-1}$ and $y_i$ is the manifold analogue for the difference of gradients between iterates, $g_i - g_{i-1}$, which are standard optimization quantities in optimization algorithms set in $\mathbb{R}^{n}$. 

Our initial step size for the direction $p_i$ is given as
\begin{align*}
\alpha_0 = - g_i^T p_i / (L_i \|p_i\|_2^2)
\end{align*}
where $L_i = y_i^T s_i / \|s_i\|_2^2$ is the estimate of the Lipschitz
constant for the gradient \cite[Eq. 16]{shi2004convergence}. Because we are operating in the HT
parameter space, the above computations require $O(\text{dim}(M)) = O(dNK + (d-1)K^3)$ operations, much less than the $2|\Omega|(d+1)K^d$ operations used in \cite{tuckercompletion} to initialize their line search. We justify this choice because
we are working on large-scale problems where we have to limit the
number of operations in the full tensor space, even when $|\Omega|$ is small. 

Moreover, computing the gradient is much more expensive than
evaluating the objective. For this reason, we use a simple Armijo-type
back-/forward-tracking approach that only involves function
evaluations and seeks to minimize the $1D$ function $f(x + \alpha
p_i)$ quasi-optimally, i.e., to find $m \in \mathbb{Z}$ such that
$\alpha = \theta^m \alpha_0 $ for $\sigma > 0$
\begin{align}
\label{eq:armijo}
f(x_i + \alpha p_i) - f(x_i) \le \sigma \alpha g_i^T p_i  \\
f(x_i + \alpha p_i) \le \min\{f(x_i + \theta\alpha p_i),f(x_i +
\theta^{-1} \alpha p_i)\} \notag
\end{align}
so $\alpha \approx \alpha^* = \argmin_{\alpha} f(x_i + \alpha p_i)
$ in the sense that increasing or decreasing $\alpha$ by a factor of $\theta$ will increase $f(x_i + \alpha p_i)$. After the first few iterations of our optimization procedure, we
observe empirically that our line search only involves two or three
additional function evaluations to verify the second inequality in
(\ref{eq:armijo}), i.e., our initial step length $\alpha_0$ is
quasi-optimal.

Because $\phi(R_x(\alpha \eta)) = \phi(x + \alpha\eta)$ for any $x
\in \paramspaceclean$ and horizontal vector $\eta$, where $R_x$ is
either the QR or square-root based retraction, Armijo linesearches do
not require reorthogonalization, which further reduces computational
costs.

\subsubsection{Gauss-Newton Method}
\label{sec:gaussnewton}
Because of the least-squares structure of our tensor completion
problem (\ref{eq:tensorcompletion}), we can approximate the Hessian by the Gauss-Newton Hessian
\begin{align*}
  H_{GN}:= D\phi^*(x) D\phi(x) : \horizspace \to \horizspace.
\end{align*}
Note that we do not use the ``true'' Gauss-Newton Hessian, $D\phi^*(x)
P_{\Omega}^*P_{\Omega} D\phi(x)$, for the tensor completion case, since for even
moderate subsampling ratios, $P_{\Omega}^* P_{\Omega}$ is close to the zero
operator and this Hessian is very poorly conditioned as a result.

Since $D\phi(x) : \horizspace \to \mathcal{T}_{\phi(x)} \mathcal{H}$ is an
isomorphism, it is easy to see that $H_{GN}$ is symmetric and positive
definite on $\horizspace$.  The solution to the Gauss-Newton equation
is then
\begin{align*}
H_{GN} \xi = - \nabla^R f(x)
\end{align*}
for $\xi \in \horizspace$. 
We can simplify the computation of $H_{GN}$ by exploiting the recursive structure of $D\phi^*(x)$ and $D\phi(x)$, thereby avoiding intermediate vectors of size $\fullspace$ in the process. We write at the root 
\begin{align*}
  \delta U'_{t_l} &= (I - U_{t_l} U_{t_l}^T) \dfrac{\partial U_t}{\partial U_{t_l}}^T D\phi(x)[\xi], \\
  \delta U'_{t_r} &= (I - U_{t_r} U_{t_r}^T)  \dfrac{\partial U_t}{\partial U_{t_r}}^T D\phi(x)[\xi], \\
  \delta B'_t &= \dfrac{\partial U_t}{\partial \tensor{B}_t}^T D\phi(x)[\xi]\end{align*}
where
\begin{align*}
D\phi(x)[\xi] = \delta U_{t_l} \times_1
  U_{t_r} \times_2  B_{t} + U_{t_l} \times_1 \delta U_{t_r} \times_2
  B_{t} + U_{t_l} \times_{1} U_{t_r} \times_2 \delta
  B_{t}, \quad t = \troot.
\end{align*}
In the above expression, $D\phi(x)$ is horizontal, so that for each $t
\in T \setminus \troot$, $\delta U_t$ is perpendicular to $U_t$ (\ref{eq:orthogonalitycontraints}). A straightforward
computation simplifies the above expression to
\begin{align*}
  \delta U'_{t_l} = \left(\dfrac{\delta U_{\troot}}{\delta U_{t_l}}\right)^T \delta U_{\troot} &= \delta U_{t_l} B_{\troot} B_{\troot}^T \\
  &:= \delta U_{t_l} G_{t_l},\\
  \delta U'_{t_r} = \left(\dfrac{\delta U_{\troot}}{\delta U_{t_r}}\right)^T \delta U_{\troot}  &= \delta U_{t_r} B_{\troot}^T B_{\troot} \\
  &:= \delta U_{t_r} G_{t_r},\\
  \delta B'_t = \left(\dfrac{\delta U_{\troot}}{\delta B_{\troot}}\right)^T \delta
  U_{\troot} &= \delta B_{\troot}.
\end{align*}
This expression gives us the components of the horizontal vector
$\delta U'_{t_l}, \delta U'_{t_r}$ sent to the left and right
children, respectively, as well as the horizontal vector $\delta
B'_t$.

We proceed recursively by considering a node $t \in T \setminus L \cup
\troot$ and let $\delta U_t G_t$ be the contribution from the parent
node of $t$. By applying the adjoint partial derivatives, followed by
an orthogonal projection on to $\horizspace$, we arrive at a simplified form for the
Gauss-Newton Hessian
\begin{align*}
  P_{U_{t_l}}^{\perp} \dfrac{\delta U_{t}}{\delta U_{t_l}}^T \delta U_{t} G_t &= \langle \delta U_{t_l} \times_1 G_t \times_3 \tensor{B}_t, \tensor{B}_t \rangle_{(2,3),(2,3)} \\
  &:= \delta U_{t_l} G_{t_l},\\
  P_{U_{t_r}}^{\perp} \dfrac{\delta U_{t}}{\delta U_{t_r}}^T \delta U_{t} G_t &= \delta U_{t_r} G_{t_r}, \\
  P_{B_t^{(1,2)}}^{\perp}  \dfrac{\delta U_{t}}{\delta \tensor{B}_t}^T
  \delta U_{t} G_t &= \delta G_t \times_3 \tensor{B}_t .
\end{align*}
In these expressions, the matrices $G_t$ are the Gramian matrices
associated to the HT format, initially introduced in \cite{htuckersvd}
and used for truncation of  a general tensor to the HT format as in
\cite{htucktoolboxthesis}. They satisfy, for $x = (U_t, \tensor{B}_t) \in
\paramspaceclean$,
\begin{align}
\label{eq:gramian}
G_{\troot} & = 1\\
G_{t_l} & = \langle G_t \times_3 \tensor{B}_t, \tensor{B}_t \rangle_{(2,3),(2,3)} \notag\\
G_{t_r} & = \langle G_t \times_3 \tensor{B}_t, \tensor{B}_t \rangle_{(1,3),(1,3)}, \notag
\end{align}
i.e., the same recursion as $G_t$ in the above derivations. Each $G_t$
is a $k_t \times k_t$ symmetric positive definite matrix (owing to the
full rank constraints of the HT format) and also satisfies
\begin{align}
\label{eq:grameig}
\lambda_j(G_t) = \sigma_j(X^{(t)})^2
\end{align}
where $\lambda_j(A)$ is the $j$th eigenvalue of the matrix $A$ and $\sigma_j(A)$ is the $j$th singular value of $A$. 

Assuming that each $G_t$ is well conditioned, applying the inverse of
$H_{GN}$ follows directly, summarized in \Cref{alg:hgninv}.
\begin{algorithm}
\caption{$H_{GN}^{-1} \zeta$} 
\begin{algorithmic}
\label{alg:hgninv}
\REQUIRE Current point $x = (U_t, \tensor{B}_t)$, horizontal vector $\zeta = (\delta U_t, \delta \tensor{B}_t)$
\STATE Compute $(G_t)_{t \in T}$ using (\ref{eq:gramian})
\FOR{ $t \in T \setminus \troot$ }
\IF{ $t \in L$ }
\STATE $\widetilde{\delta U_t} \gets \delta U_t G_t^{-1}$
\ELSE
\STATE $\widetilde{\tensor{\delta B}_t} \gets G_t^{-1} \times_3 \tensor{\delta B}_t $
\ENDIF
\ENDFOR
\RETURN $H_{GN}^{-1} \zeta \gets (\widetilde{\delta U_t}, \widetilde{\delta \tensor{B}_t}) $
\end{algorithmic}
\end{algorithm} 
For the case where our solution HT tensor exhibits
\emph{quickly}-decaying singular values of the matricizations, as is
typically the assumption on the underlying tensor, the Gauss-Newton
Hessian becomes poorly conditioned as the iterates converge to the
solution, owing to (\ref{eq:grameig}). This can be remedied by
introducing a small $\epsilon > 0$ and applying $(G_t + \epsilon
I)^{-1}$ instead of $G_t^{-1}$ in \Cref{alg:hgninv} or by applying
$H_{GN}^{-1}$ by applying $H_{GN}$ in a truncated PCG method. For
efficiency purposes, we find the former option
preferable. Alternatively, we can also avoid ill-conditioning via
regularization, as we will see in the next section.

\begin{remark}
  We note that applying the inverse Gauss-Newton Hessian to a tangent
  vector is akin to ensuring that the projection on to the horizontal
  space is orthogonal, as in \cite[6.2.2]{htuckgeom}. Using this
  method, however, is much faster than the previously proposed method,
  because applying \Cref{alg:hgninv} only involves matrix-matrix
  operations on the small parameters, as opposed to operations
  on much larger intermediate matrices that live in the spaces between
  the full tensor space and the parameter space.
\end{remark}

\subsection{Regularization} 
\label{subsec:gramregularization}
In the tensor completion case, when there is little data available,
interpolating via the Gauss-Newton method is susceptible to
overfitting if one chooses the ranks $(k_t)_{t \in T}$ for the
interpolated tensor too high. In that case, one can converge to
solutions in $\text{null}(P_{\Omega})$ that try leave the current
manifold, associated to the ranks $(k_t)_{t \in T}$, to another nearby
manifold corresponding to higher ranks. This can lead to degraded
results in practice, as the actual ranks for the solution tensor are
almost always unknown. One can use cross-validation techniques to
estimate the proper internal ranks of the tensor, but we still need to
ensure that the solution tensor has the predicted ranks for this approach to be successful -- i.e., the iterates $x$ must stay away from the boundary of $\htuckspaceclean$.

To avoid our HT iterates converging to the manifold boundary, we
introduce a regularization term on the singular values of the HT
tensor $\phi(x) = \tensor{X}$. To accomplish this, we exploit the hierarchical
structure of $\tensor{X}$ and specifically the property of the Gramian
matrices $G_t$ in (\ref{eq:grameig}) to ensure that \emph{all}
matricizations of $\tensor{X}$ remain well-conditioned \emph{without}
having to perform SVDs on each matricization $X^{(t)}$. The latter approach 
would be prohibitively expensive when $d$ or $N$ are even moderately large. 

Instead, we penalize the growth of the Frobenius norm of $X^{(t)}$ and
$(X^{(t)})^\dagger$, which indirectly controls the largest and smallest singular
values of $X^{(t)}$.  We implement this regularization via the Gramian matrices in
the following way.
From (\ref{eq:grameig}), it follows that $\text{tr}(G_t) = \|G_t\|_* = \|X^{(t)}\|^2_F$
and likewise $\text{tr}(G_t^{-1}) = \|G_t^{-1}\|_* =
\|(X^{(t)})^\dagger\|^2_F$.
Our regularizer is then
\begin{align*}
  R((\tensor{B}_{t'})_{t' \in T}) = \sum_{t \in T} \text{tr}(G_t) + \text{tr}(G_t^{-1}).
\end{align*}
A straightforward calculation shows that for $\fancyA \in \liegroup$
\begin{align*}
  (G_t)_{t \in T, x} = (A_t^T G_t A_t)_{t \in T, \theta_{\fancyA}(x)}
\end{align*}
for $A_t$ orthogonal. Therefore, our regularizer $R$ is well-defined on
the quotient manifold in the sense that it is $\theta-$invariant on
the parameter space $\paramspaceclean$. This is the same
regularization term considered in \cite{tuckercompletion}, used for
(theoretically) preventing the iterate from approaching the boundary
of $\htuckspaceclean$. In our case, we can leverage the structure of the
Gramian matrices to implement this regularizer in a computationally
feasible way.

Since in the definition of the Gramian matrices (\ref{eq:gramian}),
$G_t$ is computed recursively via tensor-tensor contractions (which
are smooth operations), it follows that the mapping $g : (\tensor{B}_t)_{t \in
  T \setminus L} \to (G_t)_{t \in T}$ is smooth. In order to compute its
derivatives, we consider a node $t \in T \setminus \troot$ and
consider the variations of its left and right children, i.e.,
\begin{align}
\label{eq:dGramian}
\delta G_{t_r} &= \dfrac{\partial G_{t_r}}{\partial \tensor{B}_t} \delta \tensor{B}_t + \dfrac{\partial G_{t_r}}{\partial G_t} \delta G_t \\
\delta G_{t_l} &= \dfrac{\partial G_{t_l}}{\partial \tensor{B}_t} \delta \tensor{B}_t +
\dfrac{\partial G_{t_l}}{\partial G_t} \delta G_t . \notag
\end{align}
We can take the adjoint of this recursive formulation, and thus obtain
the gradient of $g$, if we compute the adjoint partial derivatives in
(\ref{eq:dGramian}) as well as taking the adjoint of the recursion
itself. To visualize this process, we consider the relationship
between input variables and output variables in the recursion as a
series of small directed graphs.
\captionsetup[subfigure]{labelformat=empty}

\begin{figure}[H]
\centering
\subfloat[]{ 
\includegraphics[scale=0.5]{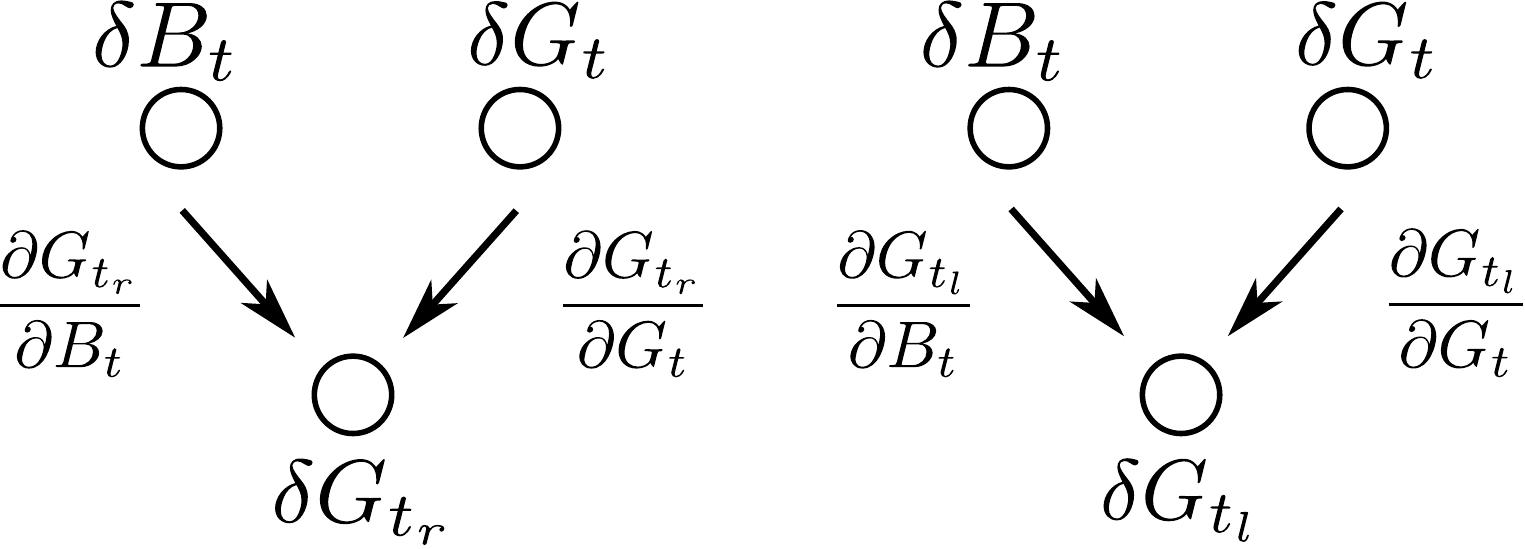} 
} 
\caption{ Forward Gramian derivative map } 
\end{figure}
These graphs can be understood in the context of Algorithmic
Differentiation, whereby the forward mode of this derivative map
propagates variables up the tree and the adjoint mode propagates
variables down the tree and adds (accumulates) the contributions of
the relevant variables. 
\begin{figure}[H]
\centering
\subfloat[]{ 
\includegraphics[scale=0.5]{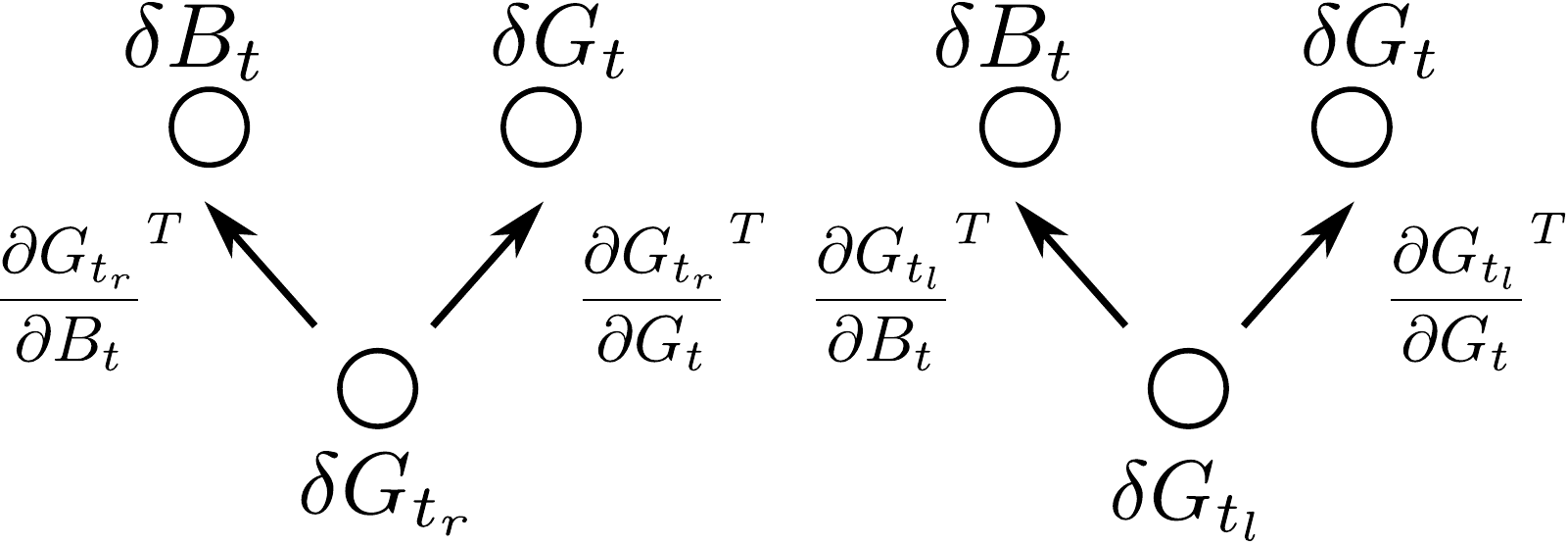}
} 
\caption { Adjoint Gramian derivative map }
\end{figure}
Since we only consider tangent vectors $\delta B_t$ that are in the
horizontal space at $x$, each extracted component is projected on to
$(B_t^{(1,2)})^{\perp}$. We summarize our results in the following
algorithms.

\begin{algorithm}
\caption{$Dg[\delta \tensor{B}_t]$} 
\begin{algorithmic}
\label{alg:dGramian}
\REQUIRE Current point $x = (U_t, \tensor{B}_t)$, horizontal vector $dx = (\delta U_t, \delta \tensor{B}_t)$
\STATE Compute $(G_t)_{t \in T}$ using (\ref{eq:gramian})
\STATE $\delta G_{\troot} \gets 0$
\FOR{ $t \in T \setminus L$, visiting parents before children }
\STATE $\delta G_{t_l} \gets \langle \delta G_t \times_3 \tensor{B}_t, \tensor{B}_t \rangle_{(2,3),(2,3)} + 2 \langle G_t \times_3 \delta \tensor{B}_t, \tensor{B}_t \rangle_{(2,3),(2,3)}$
\STATE $\delta G_{t_r} \gets \langle \delta G_t \times_3 \tensor{B}_t, \tensor{B}_t \rangle_{(1,3),(1,3)} + 2 \langle G_t \times_3 \tensor{\delta B_t}, \tensor{B}_t \rangle_{(1,3),(1,3)}$
\ENDFOR
\RETURN $Dg[\delta \tensor{B}_t] \gets (\delta G_t)_{t \in T} $
\end{algorithmic}
\end{algorithm} 
\begin{algorithm}
\caption{$Dg^*[\delta G_t] $} 
\begin{algorithmic}
\label{alg:dGramianadj}
\REQUIRE Current point $x = (U_t, \tensor{B}_t)$, Gramian variations $(\delta G_t)_{t \in T}$, $\delta G_{\troot} = 0$
\STATE Compute $(G_t)_{t \in T}$ using (\ref{eq:gramian})
\FOR{ $t \in T$ }
\STATE $\widetilde{\delta G_t} \gets \delta G_t$
\ENDFOR
\FOR{ $t \in T \setminus L$, visiting children before parents }
\STATE $\delta \tensor{B}_t \gets (\widetilde{\delta G_{t_l}} + \widetilde{\delta G_{t_l}}^T) \times_1 G_t \times_3 \tensor{B}_t + (\widetilde{\delta G_{t_r}} + \widetilde{\delta G_{t_r}}^T) \times_2 G_t \times_3 \tensor{B}_t$
\IF { $t \neq \troot$ }
\STATE $\delta \tensor{B}_t \gets (P_{B^{(1,2)}}^{\perp} \delta \tensor{B}_t^{(1,2)})_{(1,2)}$
\STATE $\widetilde{\delta G_{t}} \gets \delta G_t + \langle \widetilde{G_{t_l}} \times_1 \tensor{B}_t, \tensor{B}_t \rangle_{(1,2),(1,2)} + \langle \widetilde{G_{t_r}} \times_2 \tensor{B}_t, \tensor{B}_t \rangle_{(1,2),(1,2)} $
\ENDIF
\ENDFOR
\RETURN $Dg^*[\delta G_t] \gets (\delta \tensor{B}_t)_{t \in T} $
\end{algorithmic}
\end{algorithm} 
Applying \Cref{alg:dGramianadj} to the gradient of $R(\tensor{B}_t)$,
\begin{align*}
  \nabla R(\tensor{B}_t) = ( V_t (I_{k_t} - S_t^{-2}) V_t^T ),
\end{align*}
where $G_t = V_t S_t V_t^T$ is the eigenvalue decomposition of $G_t$, yields the Riemannian gradient of the
regularizer. Note that here, we avoid having to compute SVDs of
\emph{any} matricizations of the full data $\phi(x)$, resulting in a
method which is much faster than other tensor completion methods that
require the SVDs on tensors in $\fullspace$ \cite{prevwork:tuckercompletion}. Note that the cost of computing this
regularizer $R(\tensor{B}_t)$ and its gradient are almost negligible compared
to the cost of computing the objective and its Riemannian gradient.

Finally, we should also note that the use of this regularizer is not
designed to improve the recovery quality of problem instances with a
relatively large amount of data and is useful primarily in the case
where there is very little data so as to prevent overfitting, as we shall
see in the numerical results section.

\subsection{Convergence analysis}
Our analysis here follows from similar considerations in \cite[Sec. 3.6]{tuckercompletion}.
\begin{thm} 
  Let $\{x_i\}$ be an infinite sequence of iterates, with $x_i$ generated at iteration $i$,
  generated from \Cref{alg:cg} for the Gramian-regularized objective
  with $\lambda>0$
\begin{align*}
  f(x) = \dfrac{1}{2} \|P_{\Omega} \phi(x) - b\|_2^2 + \lambda^2 \sum_{t \in T \setminus \troot} \text{\rm{tr}}(G_t(x)) + \text{\rm{tr}}(G_t^{-1}(x)).
\end{align*}
Then $\lim_{i \to \infty} \|\nabla^R f(x_i) \| = 0$. 
\end{thm}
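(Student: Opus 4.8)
The plan is to reduce the statement to the standard global convergence theorem for Armijo-type line-search methods with gradient-related search directions on a Riemannian manifold \cite[Thm. 4.3.1]{optmatrixmanifold}, applied to the quotient manifold $\quotspace$ (equivalently, to $\paramspaceclean$ via horizontal lifts, using that both $f$ and the iteration are $\theta$-invariant). That machinery delivers the claim once three hypotheses are checked: (i) the iterates $\{x_i\}$ stay in a compact subset of the \emph{interior} of $\paramspaceclean$ on which $\phi$, the QR retraction $R_{x_i}$, and $\nabla^R f$ are all well defined and smooth; (ii) $f$ has Lipschitz-continuous Riemannian gradient on that set; and (iii) the directions $p_i$ produced by \Cref{alg:cg} are gradient-related. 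The overwhelming bulk of the work is (i); given it, (ii) and (iii) are comparatively routine.

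For (i), I would use the regularizer precisely to supply coercivity in the directions in which $\htuckspaceclean$ degenerates. By \eqref{eq:grameig} one has $\lambda_j(G_t)=\sigma_j(X^{(t)})^2$, so $\tr(G_t)=\|X^{(t)}\|_F^2$ and $\tr(G_t^{-1})=\|(X^{(t)})^{\dagger}\|_F^2=\sum_j \sigma_j(X^{(t)})^{-2}$. Since the Armijo condition \eqref{eq:armijo} forces $f(x_{i+1})\le f(x_i)\le f(x_0)$, every iterate lies in the sublevel set $\mathcal{L}=\{x:f(x)\le f(x_0)\}$. On $\mathcal{L}$ the finiteness of $\sum_t \tr(G_t)$ bounds every $\|X^{(t)}\|_F$ from above, while the finiteness of $\sum_t \tr(G_t^{-1})$ forces $\min_{t,j}\sigma_j(X^{(t)})$ uniformly away from $0$. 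Hence $\phi(\mathcal{L})$ consists of tensors whose every matricization attains its full prescribed rank $k_t$ with singular values confined to a fixed compact subinterval of $(0,\infty)$, placing $\phi(\mathcal{L})$ in a closed, bounded subset of $\htuckspaceclean$ bounded away from the lower-rank boundary. Pulling this back through the compact Stiefel and $S_{k_{t_l},k_{t_r},k_t}$ factors (with the now-norm-bounded, full-rank root block) shows $\mathcal{L}$ is itself compact in $\paramspaceclean$ and contained in the smooth interior, which in particular keeps $R_{x_i}$ well defined.

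Granting (i), hypothesis (ii) is immediate: $f$ is smooth on the interior, so its Riemannian gradient is Lipschitz with respect to the QR retraction on the compact $\mathcal{L}$ \cite{optmatrixmanifold}, and this also legitimizes the Lipschitz-constant estimate $L_i$ used to initialize the step length. For (iii), the restart test $\langle p_i,g_i\rangle>-\gamma$ forces an angle condition: whenever the conjugate direction is not sufficiently downhill it is replaced by $-g_i$, and for small gradients (once $\|g_i\|^2<\gamma$) the method always reverts to steepest descent, so $\{p_i\}$ is gradient-related and $\|p_i\|$ is controlled on $\mathcal{L}$ via the bounded vector transport. Combining the Lipschitz gradient with the Armijo decrease then yields a uniform sufficient-decrease bound $f(x_i)-f(x_{i+1})\ge c\,\|g_i\|^2$ with $c>0$; summing this against the lower bound $f\ge 0$ gives $\sum_i \|g_i\|^2<\infty$, and therefore $\lim_{i\to\infty}\|\nabla^R f(x_i)\|=0$ (the summability argument delivers the full limit, not merely a liminf).

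The main obstacle is step (i). The fixed-rank set $\htuckspaceclean$ is not closed, and both the smoothness of $\hat f$ and the very definition of the QR retraction fail on its boundary, where some $\sigma_j(X^{(t)})\to 0$; without control there the iteration could in principle drift toward a lower-rank manifold. The crux is therefore to convert the mere finiteness of $\sum_t \tr(G_t^{-1})$ on the sublevel set into a \emph{uniform} positive lower bound on the matricization singular values across all nodes $t$ and indices $j$, and then to argue compactness of the parameter sublevel set despite the $\liegroup$-ambiguity of the representation --- the latter being precisely why restricting to orthogonalized parameters is convenient, since it confines every factor except the root block to a compact manifold.
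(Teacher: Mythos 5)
Your high-level strategy is the same as the paper's: Armijo monotonicity keeps all iterates in the sublevel set of $f$ at $x_0$; the identities $\tr(G_t) = \|X^{(t)}\|_F^2$ and $\tr(G_t^{-1}) = \|(X^{(t)})^{\dagger}\|_F^2$ coming from (\ref{eq:grameig}) turn the bound $f(x_i) \le f(x_0) =: C^2$ into uniform two-sided singular value bounds $\lambda/C \le \sigma_j(X_i^{(t)}) \le C/\lambda$ for all nodes $t$; and compactness together with the standard line-search theorem \cite[Thm 4.3.1]{optmatrixmanifold} yields the conclusion. Your treatment of the tail end (gradient-relatedness of the CG directions via the restart test, and upgrading ``every accumulation point is critical'' to the full limit $\lim_i \|\nabla^R f(x_i)\| = 0$) is in fact more explicit than the paper's, which leaves both points implicit.

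The gap is exactly in the step you yourself call the crux, and it is genuine: you claim $\mathcal{L}$ is compact in $\paramspaceclean$ because every factor other than the root block lives in ``the compact Stiefel and $S_{k_{t_l},k_{t_r},k_t}$ factors.'' The leaf Stiefel factors are compact, but $S_{k_{t_l},k_{t_r},k_t}$ is \emph{not}: it consists of mode-$(1,2)$ orthonormal $3$-tensors that in addition have \emph{full multilinear rank}, and the rank condition is open. A sequence of such $\tensor{B}_t$ can converge inside the compact Stiefel manifold $\text{St}(k_{t_l}k_{t_r},k_t)$ to a tensor whose mode-$1$ or mode-$2$ matricization is rank-deficient, i.e.\ to a point outside $\paramspaceclean$; this is the same lower-rank degeneration you are guarding against, reappearing at the level of the internal parameters rather than the tensor $\phi(x)$. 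There are two ways to close it. (a) Directly: propagate your singular-value bound down the tree using the Gramian recursion (\ref{eq:gramian}); since, up to index ordering, $G_{t_l} = B_t^{(1)} (G_t \otimes I)(B_t^{(1)})^T$ with $G_t \preceq (C^2/\lambda^2) I$, the bound $\lambda_{\min}(G_{t_l}) \ge \lambda^2/C^2$ forces $\sigma_{\min}(B_t^{(1)}) \ge \lambda^2/C^2$ (similarly for mode $2$), so the admissible $\tensor{B}_t$ form a closed, bounded subset of Euclidean space and $\mathcal{L}$ is then genuinely compact. (b) The paper's route: take the compact set $\mathcal{C} \subset \htuckspaceclean$ cut out by the singular-value bounds, use that $\hat{\phi}$ is a homeomorphism so that $\hat{\phi}^{-1}(\mathcal{C})$ is compact in $\quotspace$, and then prove sequential compactness of $\pi^{-1}(\hat{\phi}^{-1}(\mathcal{C}))$ in $\paramspaceclean$ using the quotient metric (\ref{eq:quotmetric}) and compactness of the group $\liegroup$; note that preimages under $\pi$ of compact sets are not automatically compact, and the argument works here precisely because the fibers are orbits of a compact group acting by isometries. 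Either repair makes your proof sound; without one of them, the appeal to \cite[Thm 4.3.1]{optmatrixmanifold} is not yet justified, since nothing prevents the iterates from accumulating on the rank-deficient boundary where $f$, the Riemannian gradient, and the QR retraction cease to be defined.
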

\begin{proof}
  To show convergence, we merely need to show that the iterates remain
  in a sequentially compact set, since any accumulation point of $\{ x_i \}$ is a critical point of $f$, by  \cite[Thm 4.3.1]{optmatrixmanifold}. But this follows because by construction, since  $f(x_i)
  \le f(x_0) := C^2$ for all $i$. Letting $\tensor{X}_i := \phi(x_i)$
\begin{align*}
  &\dfrac{1}{2} \| P_{\Omega} \phi(x_i) - b\|_2^2 + \lambda^2 \sum_{t \in T \setminus \troot} \text{tr}(G_t(x_i)) + \text{tr}(G_t^{-1}(x_i)) = \\
  & \dfrac{1}{2} \| P_{\Omega} \tensor{X}_i - b\|_2^2 + \lambda^2 \sum_{t \in
    T \setminus \troot} \|X_i^{(t)}\|_{F}^2 + \|(X_i^{(t)})^{\dagger}\|_{F}^2 \le C^2
\end{align*}
This shows, in particular, that 
\begin{align*}
  \lambda^2 \sum_{t \in T\setminus \troot} \|X_i^{(t)}\|_{F}^2 \le C^2 \quad \lambda^2
  \sum_{t \in T \setminus \troot} \|(X_i^{(t)})^{\dagger}\|_{F}^2 \le C^2
\end{align*}
and therefore we have upper and lower bounds on the maximum and
minimum singular values of $X_i^{(t)}$
\begin{align*}
  \sigma_{\max}(X_i^{(t)}) \le \|X_i^{(t)}\|_{F} \le C/\lambda \quad
  \sigma^{-1}_{\min}(X_i^{(t)}) \le \|(X_i^{(t)})^{\dagger}\|_{F} \le
  C/\lambda
\end{align*}
and therefore the iterates $X_i$ stay within the compact set 
\begin{align*}
  \mathcal{C} = \{ \tensor{X} \in \htuckspaceclean : \sigma_{\min}(X_k^{(t)})
  \ge \lambda/C, \sigma_{\max}(X_k^{(t)}) \le C/\lambda, t \in T
  \setminus \troot \}.
\end{align*}
One can show, as a modification of the proof in \cite[]{htuckgeom},
that $\hat{\phi} : \quotspace \to \htuckspaceclean$ is a
homeomorphism on to its image, so that $\hat{\phi}^{-1}(C)$ is compact
in $\quotspace$.
We can introduce a metric on $\quotspace$, which generates the
topology on the quotient space, as
\begin{align}
\label{eq:quotmetricfull}
d( \pi(x), \pi(y) ) &= \inf_{\mathcal{A},\mathcal{B} \in \liegroup }
\|\theta_{\mathcal{A}}(x) - \theta_{\mathcal{B}}(y) \|_T
\end{align}
where $\|x - y\|_{T} = \sum_{t \in L} \|U_t - V_t\|_{F} + \sum_{t \in
  T \setminus L} \|\tensor{B}_t - \tensor{C}_t\|_{F} $ is the natural metric on
$\paramspaceclean$ and $x = (U_t, \tensor{B}_t)$, $y = (V_t, \tensor{C}_t)$. 

Note that this pseudo-metric is a metric which generates the topology
on $\quotspace$ by \cite[Thm 2.1]{quotmetric} since $\{
\theta_{\mathcal{A}} \}_{\mathcal{A} \in \liegroup } $ is a group of
isometries acting on $\paramspaceclean$ and the orbits of the action
are closed by \cite[Thm 2]{htuckgeom}. Note that this metric is
equivalent to 
\begin{align}
\label{eq:quotmetric}
d( \pi(x), \pi(y) ) = \inf_{\mathcal{A} \in \liegroup } \|x-
\theta_{\mathcal{A}}(y) \|_T
\end{align}
which is well-defined and equal to (\ref{eq:quotmetricfull}) since $\|
\theta_{\mathcal{A}}(x) - \theta_{\mathcal{B}}(y)\|_T = \| x -
\theta_{\mathcal{A}^{-1} \mathcal{B}}(y)\|_{T}$ and $\mathcal{A},
\mathcal{B}$ vary over $\liegroup$. \\
Therefore, if we have a sequence $\{x_i\}$ in $\pi^{-1}
(\hat{\phi}^{-1}(C))$, by compactness of $\phi^{-1}(C)$, without loss
of generality we have
\begin{align*}
  \pi(x_i) \to \pi(y)\in \hat{\phi}^{-1}(C).
\end{align*}
Then, by the characterization (\ref{eq:quotmetric}), there exists a
sequence $\fancyA_i \subset \liegroup$ such that
\begin{align*}
  d( x_i, \theta_{\fancyA_i}(y) ) \to 0
\end{align*}
Since $\liegroup$ is compact, there exists a subsequence
$\{\fancyA_{i_j}\}$ that converges to $\fancyA \in \liegroup$. It then
follows that
\begin{align*}
  d( x_{i_j}, \theta_{\fancyA}(y) ) \le d( x_{i_j},
  \theta_{\fancyA_{i_j}}(y) ) + d( \theta_{\fancyA_{i_j}}(y),
  \theta_{\fancyA}(y) ) \to 0 \quad \text{ as } j\to\infty
\end{align*}
And so $\pi^{-1}( \hat{\phi}^{-1}( C ) ) $ is sequentially compact in
$\paramspaceclean$.
Therefore since the sequence $x_k$ generated by \Cref{alg:cg} stays
inside $\pi^{-1}( \hat{\phi}^{-1}( C ) )$ for all $i$, a subsequence
of $x_i$ converges to some $x \in \pi^{-1}( \hat{\phi}^{-1}( C ) )$,
and so $x$ is a critical point
of $f$.
\end{proof}

\section{Numerical examples}
To address the challenges of large-scale tensor completion problems,
as encountered in exploration seismology, we implemented the approach
outlined in this paper in a highly optimized parallel Matlab toolbox
entitled HTOpt (available at \url{http://www.math.ubc.ca/~curtd/software.html} for academic use).
Contrary to the HT toolbox \cite{htucktoolbox}, whose primary function is
performing operations on known HT tensors, our toolbox is designed to
solve optimization problems in the HT format such as the seismic
tensor completion problem. Our package includes the general optimization on HT manifolds detailed in \Cref{alg:riemmanngrad} as well as sparsity-exploiting objective \& Riemannian gradient in \Cref{alg:riemmgradsparse}, implemented in Matlab. We also include a parallel implementation using the Parallel Matlab toolbox for both of these algorithms.  All of the following experiments were run
on a single IBM x3550 workstation with 2 quad-core Intel 2.6Ghz
processors with 16GB of RAM running Linux 2.6.18. 

A simplified variation of the experiments below using the seismic data were presented previously in \cite{DaSi1307:Hierarchical}. The experiments in our conference proceedings subsamples sources and use a Conjugate-Gradient method to solve the interpolation problem. In this paper, we use receiver subsampling and our subsequently developed Gauss-Newton and regularization methods to improve the recovery substantially while simultaneously greatly reducing the number of iterations required.

To the best of our knowledge, there is no other existing method which
is able to interpolate HT tensors from a fixed sampling set $\Omega$. As
such, we compare our Gauss-Newton
method with the interpolation scheme detailed in
\cite{tuckercompletion}, denoted geomCG, for interpolating tensors
with missing entries on the Tucker manifold. We have implemented a
completely Matlab-based version of geomCG, which does not take
advantage of the sparsity of the residual when computing the objective
and Riemannian gradient, but uses Matlab's internal calls to LAPACK
libraries to compute matrix-matrix products and is much more efficient
for this problem. To verify that our implementation of the Tucker-based interpolation scheme is correct, we compare our method to the reference mex implementation for geomCG included in \cite{tuckercompletion}, see the HTOpt package for more details. Since we take advantage of dense linear algebra routines, we find that our Matlab implementation is significantly faster than the mex code of \cite{tuckercompletion} when $K \ge 20$ and $|\Omega|$ is a significant fraction of $N^d$, as is the case in the examples below. 

\subsection{Seismic data}
We briefly summarize the structure of seismic data in this section. Seismic data is collected via a boat equipped with an airgun and, for our purposes, a 2D array of receivers positioned on the ocean floor. The boat periodically fires a pressure wave in to the earth, which reflects off of subterranean discontinuities and produces a returning wave that is measured at the receiver array. The resulting data volume is five-dimensional, with two spatial source coordinates, denoted $x_{src}, y_{src}$, two receiver coordinates, denoted $x_{rec}, y_{rec}$, and time. For these experiments, we take a Fourier transform along the time axis and extract a single 4D volume by fixing a frequency and let $\tensor{D}$ denote the resulting \emph{frequency slice} with dimensions $n_{src} \times n_{src} \times n_{rec} \times n_{rec}$. 

From a practical point of view, the acquisition of seismic data from a physical system only allows us to subsample \emph{receiver} coordinates, i.e., $\Omega = [n_{src}] \times [n_{src}] \times \mathcal{I}$ for some $\mathcal{I} \subset [n_{rec}]\times [n_{rec}]$ with $|\mathcal{I}| < n_{rec}^2$, rather than the standard tensor completion approach, which assumes that $\Omega \subset [n_{src}] \times [n_{src}] \times [n_{rec}] \times [n_{rec}]$ is random and unstructured. As a result, we use the dimension tree 
\begin{figure}[H]
\centering
\includegraphics[scale=0.75]{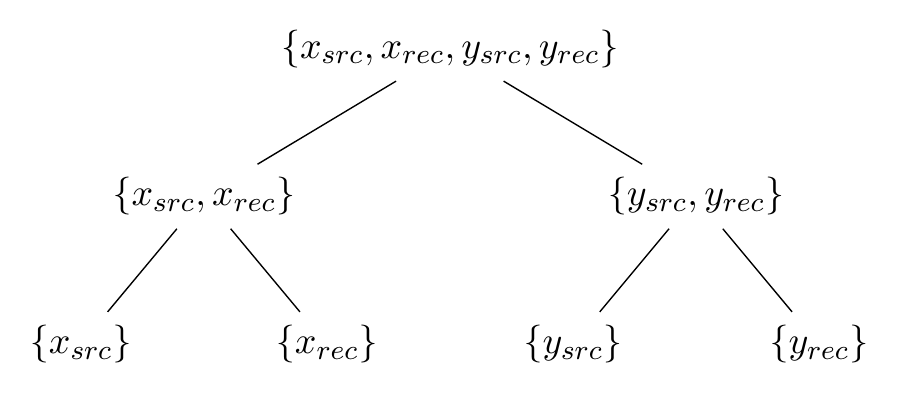}
\end{figure}
\noindent for completing seismic data. With this choice, the fully sampled data $\tensor{D}$ has \emph{quickly} decaying singular values in each matricization $D^{(t)}$ and is therefore represented well in the HT format. Additionally, the subsampled data $P_{\Omega} \tensor{D}$ has increased singular values in all matricizations, and is poorly represented as a HT tensor with fixed ranks $\mathbf{k}$ as a result. We examine this effect empirically in \cite{DaSi1307:Hierarchical} and note that this data organization is used in \cite{Demanet:2006td} in the context of solution operators of the wave equation. Although this approach is limited to considerations of seismic data, for larger dimensions/different domains, potentially the method of \cite{ballani2013tree} can choose an appropriate dimension tree automatically. In the next section, we also include the case when $\Omega \subset  [n_{src}] \times  [n_{src}]  \times  [n_{rec}] \times  [n_{rec}] $, i.e. the ``missing points'' scenario, to demonstrate the added difficulty of the ``missing receivers'' case described above.

\subsection{Single reflector data}
For this data set, we generate data from a very simple seismic model
consisting of two horizontal layers with a moderate difference in wavespeed and
density between them. We generate this data with $n_{src} = n_{rec} = 50$ and extract a frequency slice at 4.21Hz, rescaled to have unit norm. 

We consider the two sampling scenarios discussed in the previous section: we remove random points from the tensor, with results shown in
\Cref{fig:singlereflector-missingpts}, and we remove random
receivers from the tensor, with results shown in
\Cref{fig:singlereflector-missingrecs}. Here
geomCG($r_{\text{leaf}}$) - $w$ denote the Tucker interpolation
algorithm with rank $r_{\text{leaf}}$ in each mode and $w$ rank
continuation steps, i.e., the approach proposed in \cite{tuckercompletion}. We also let HT($r_{\text{leaf}},
r_{x_\text{src} x_\text{rec}}$) denote the HT interpolation method with
rank $r_{\text{leaf}}$ as in the Tucker interpolation and rank
$r_{x_\text{src} x_\text{rec}}$ as the internal rank for the dimension
tree. As is customary in the seismic literature, we measure recovery quality in
terms of SNR, namely
\begin{align*}
  \text{SNR}(\tensor{X},\tensor{D}) = -20\log_{10} \left( \dfrac{\|\tensor{X}_{\Omega^c} -
    \tensor{D}_{\Omega^c}\|}{\|\tensor{D}_{\Omega^c}\|} \right) \text{dB}, 
\end{align*}
where $\tensor{X}$ is our interpolated signal, $\tensor{D}$ is our reference solution, and $\Omega^c = [n_{src}] \times [n_{src}] \times [n_{rec}] \times [n_{rec}] \setminus \Omega$.
As we can see in \Cref{fig:singlereflector-missingpts}, the HT
formulation is able to take advantage of low-rank separability of the
seismic volume to produce a much
higher quality solution than that of the Tucker tensor completion. The
rank continuation scheme does not seem to
be improving the recovery quality of the Tucker solution to the same
degree as in \cite{tuckercompletion}, although it does seem to mitigate
some of the overfitting errors for geomCG($30$). We display slices for
fixed source coordinates and varying receiver coordinates in \Cref{fig:singlereflector-missingpts} for
randomly missing points and \Cref{fig:singlereflector-missingrecs} for
randomly missing receivers. By exploiting the low-rank structure of
the HT format compared to the Tucker format, we are able to achieve
much better results than Tucker tensor completion, especially for the
realistic case of missing receiver samples.

In all instances for these experiments, the HT tensor completion
outperforms the conventional Tucker approach both in terms of recovery
quality and recovery speed. We note that geomCG does not scale as well
computationally as our HT algorithm for $d > 3$, as the complexity analysis in \cite{tuckercompletion} predicts. As such, we only
consider the HT interpolation for the next sections, where we will solve
the tensor completion problem for much larger data volumes.

\begin{figure}
\captionsetup[subfigure]{labelformat=parens,width=110pt}
\centering
\subfloat[True data] {
\includegraphics[width=0.23\columnwidth]{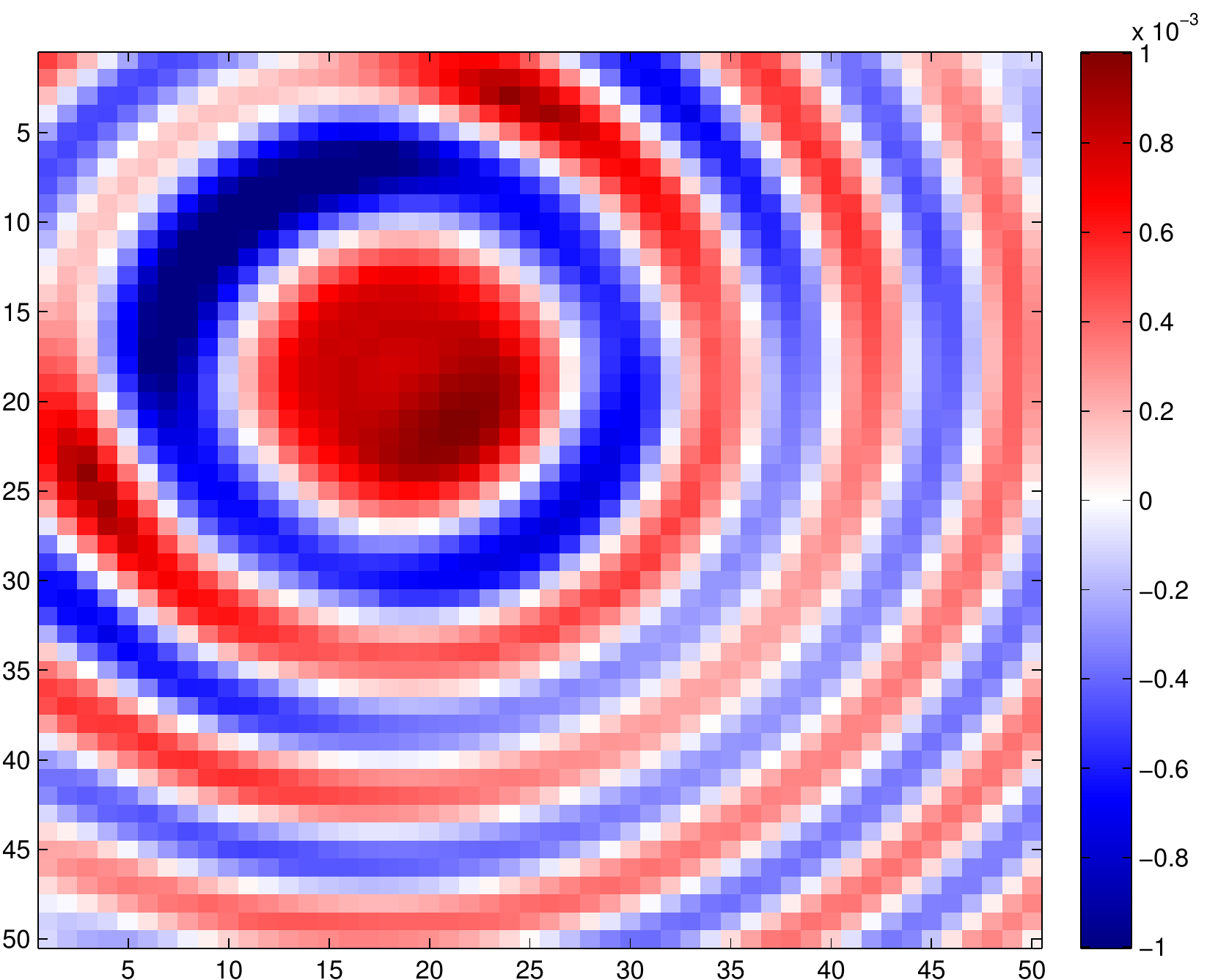}
} ~
\subfloat[ Input data $b = P_{\Omega} \tensor{X^*}$ ]{
\includegraphics[width=0.23\columnwidth]{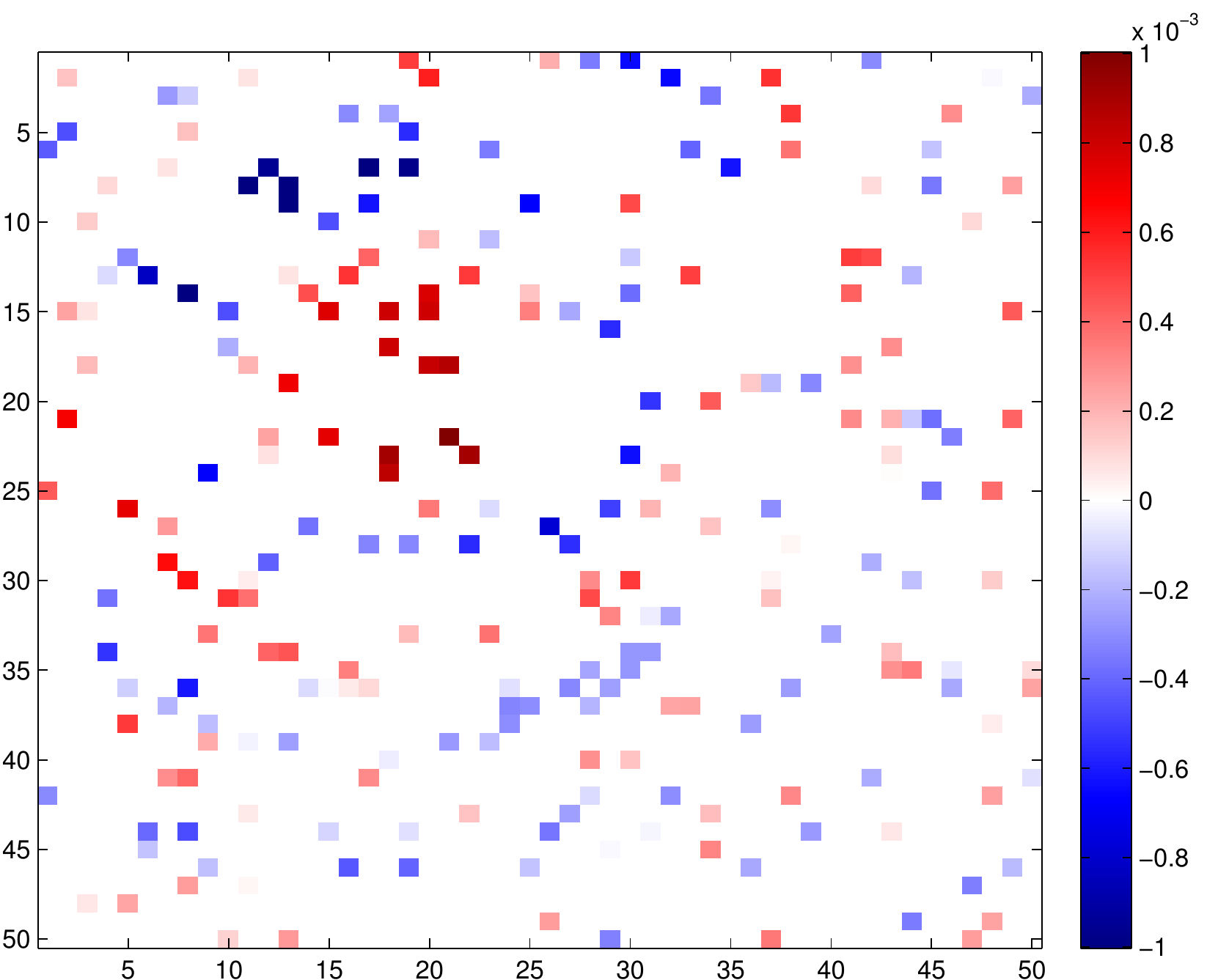}
} ~
\subfloat[HT(30,80) - SNR $30.5$ dB ]{
\includegraphics[width=0.23\columnwidth]{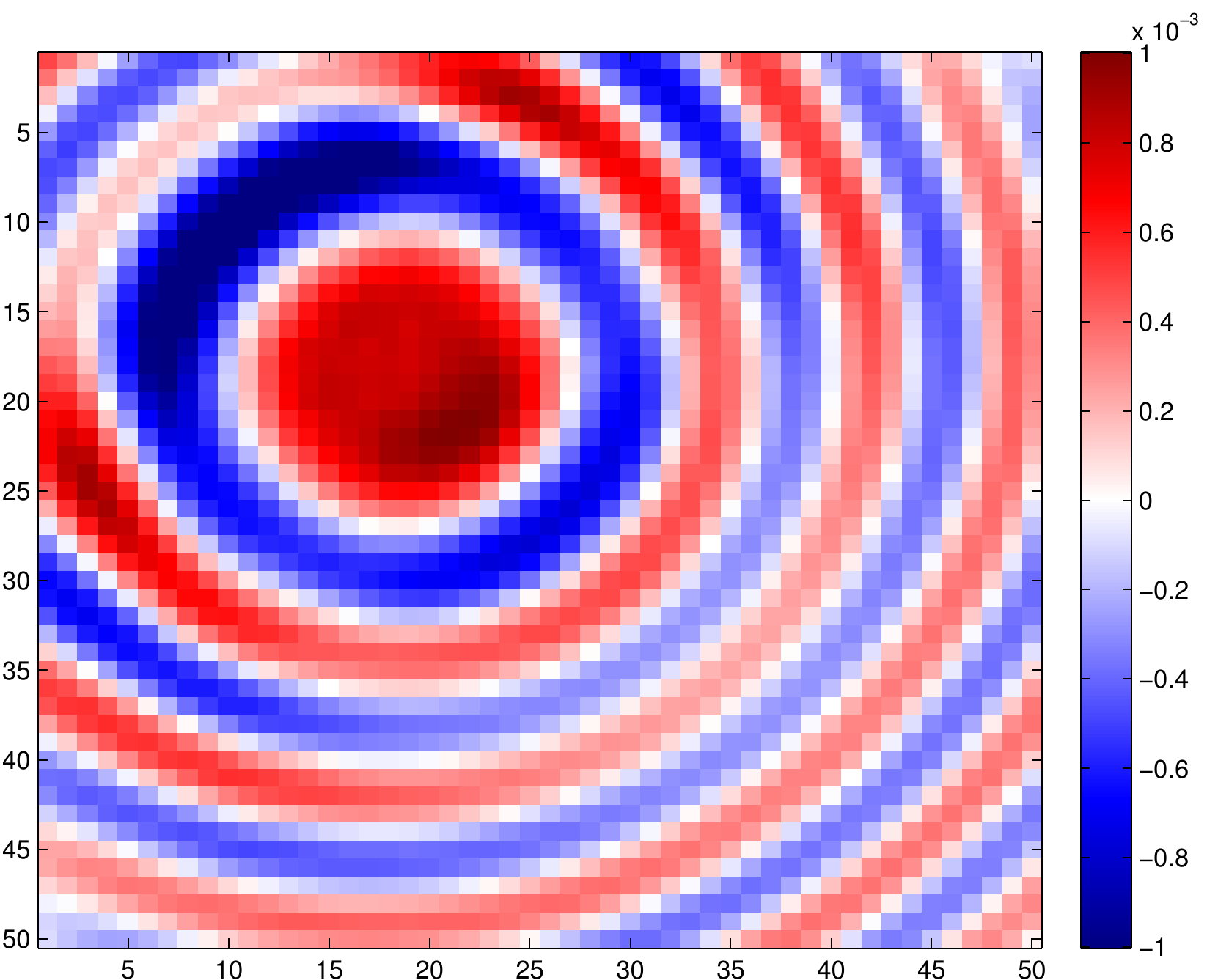}
} ~
\subfloat[geomCG(20)-0 - SNR $29.4$ dB ]{
\includegraphics[width=0.23\columnwidth]{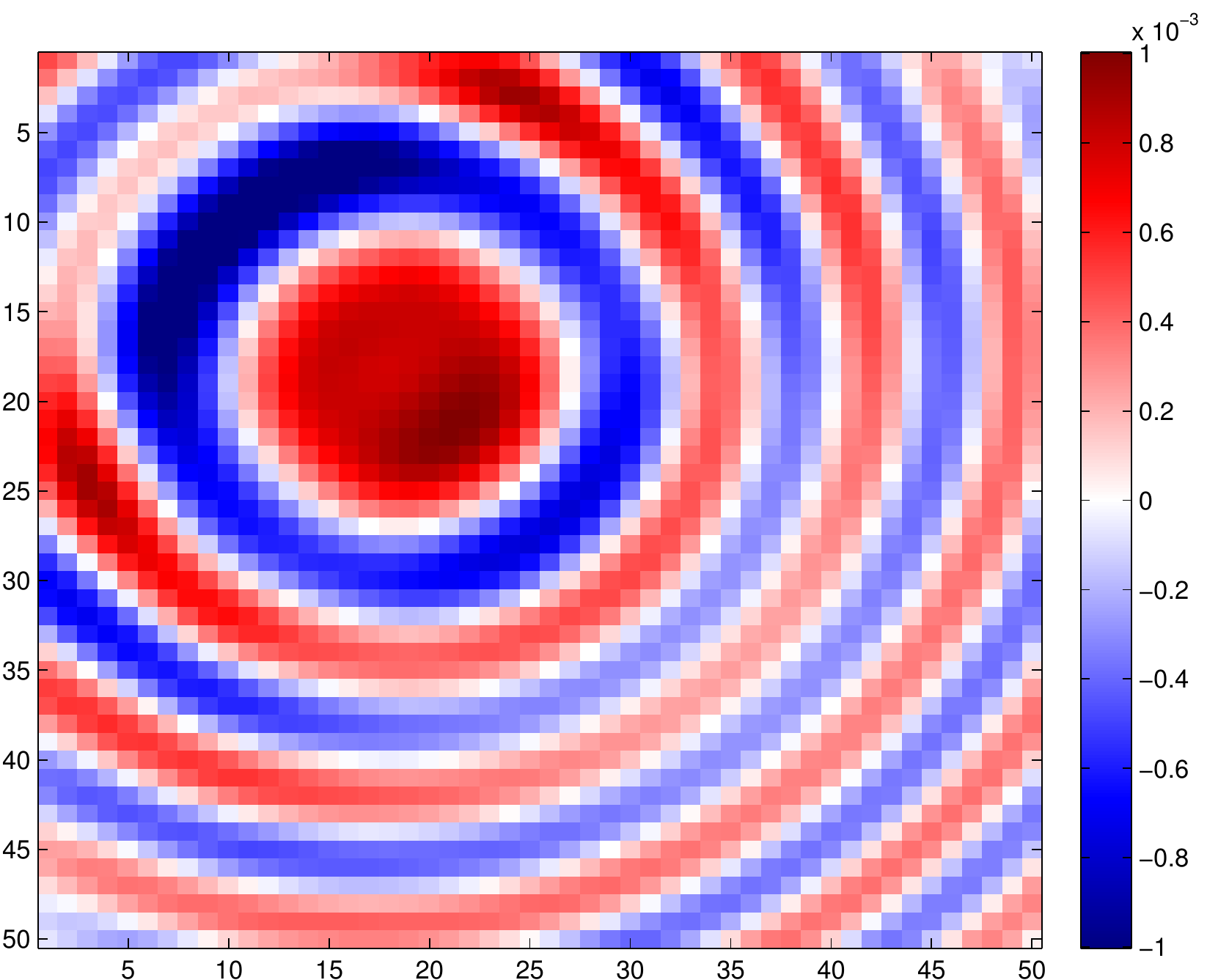}
} 
\caption{Reconstruction results for 90\% missing points, best results for geomCG and HTOpt. }
\label{fig:singlereflector-missingpts}
\end{figure}
\begin{figure}
\captionsetup[subfigure]{labelformat=parens,width=130pt}
\centering
\subfloat[ 90\% missing receivers ]{
\includegraphics[width=0.3\linewidth]{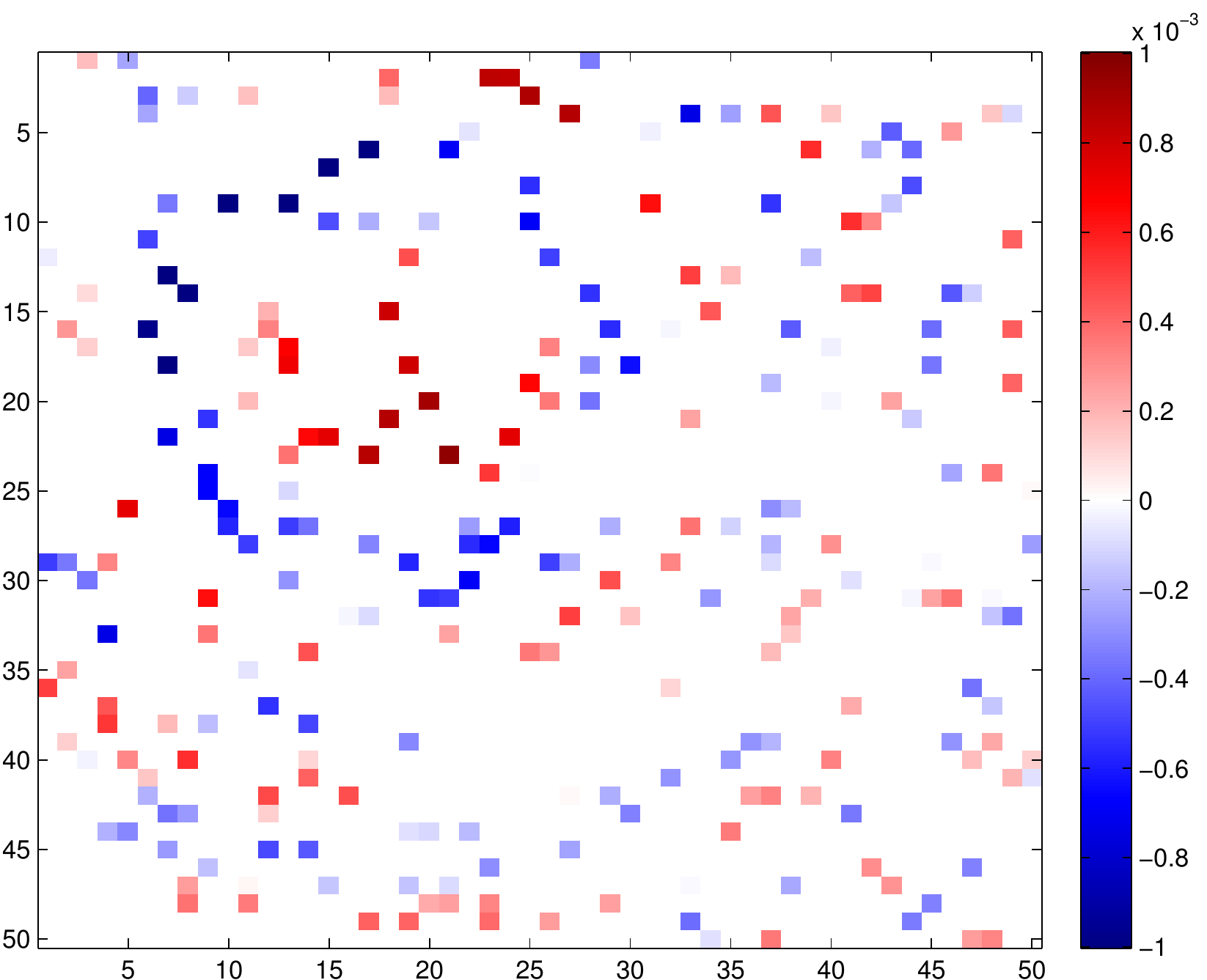}
} ~
\subfloat[HT(20,20) - SNR $7.04$ dB ]{
\includegraphics[width=0.3\linewidth]{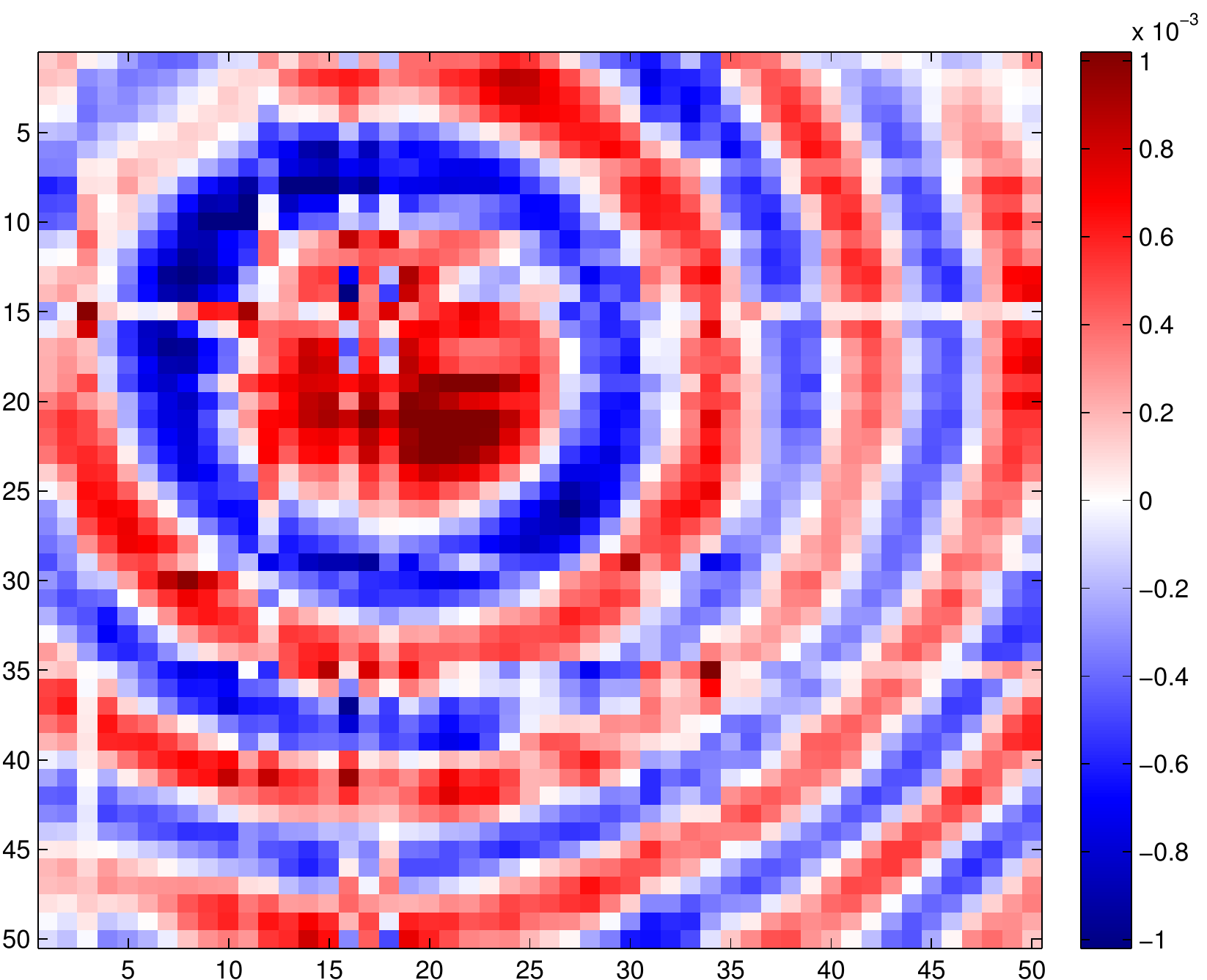}
} ~
\subfloat[geomCG(20)-0 - SNR $-1.92$ dB ]{
\includegraphics[width=0.3\linewidth]{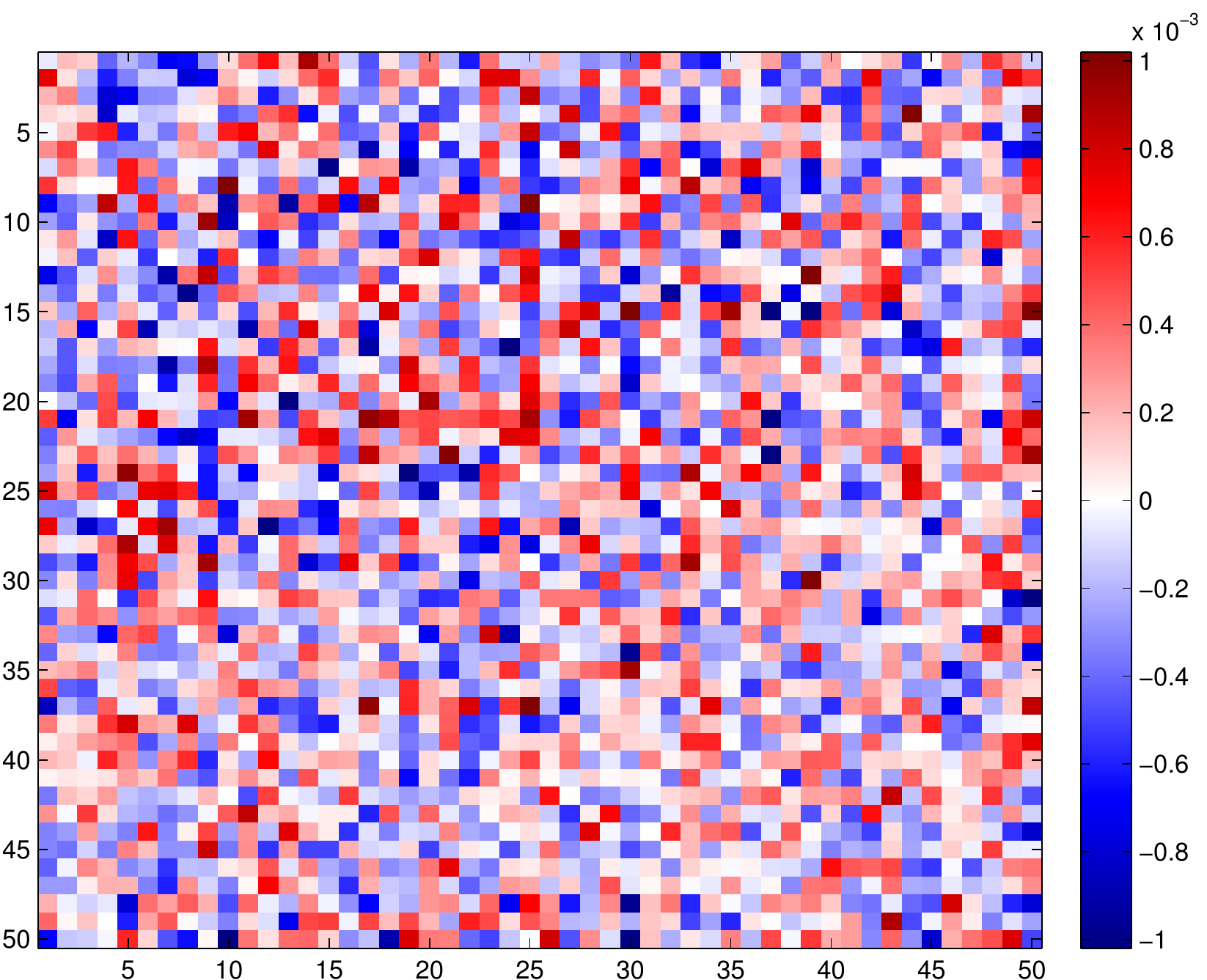}
} \\
\subfloat[ 70\% missing receivers ]{
\includegraphics[width=0.3\columnwidth]{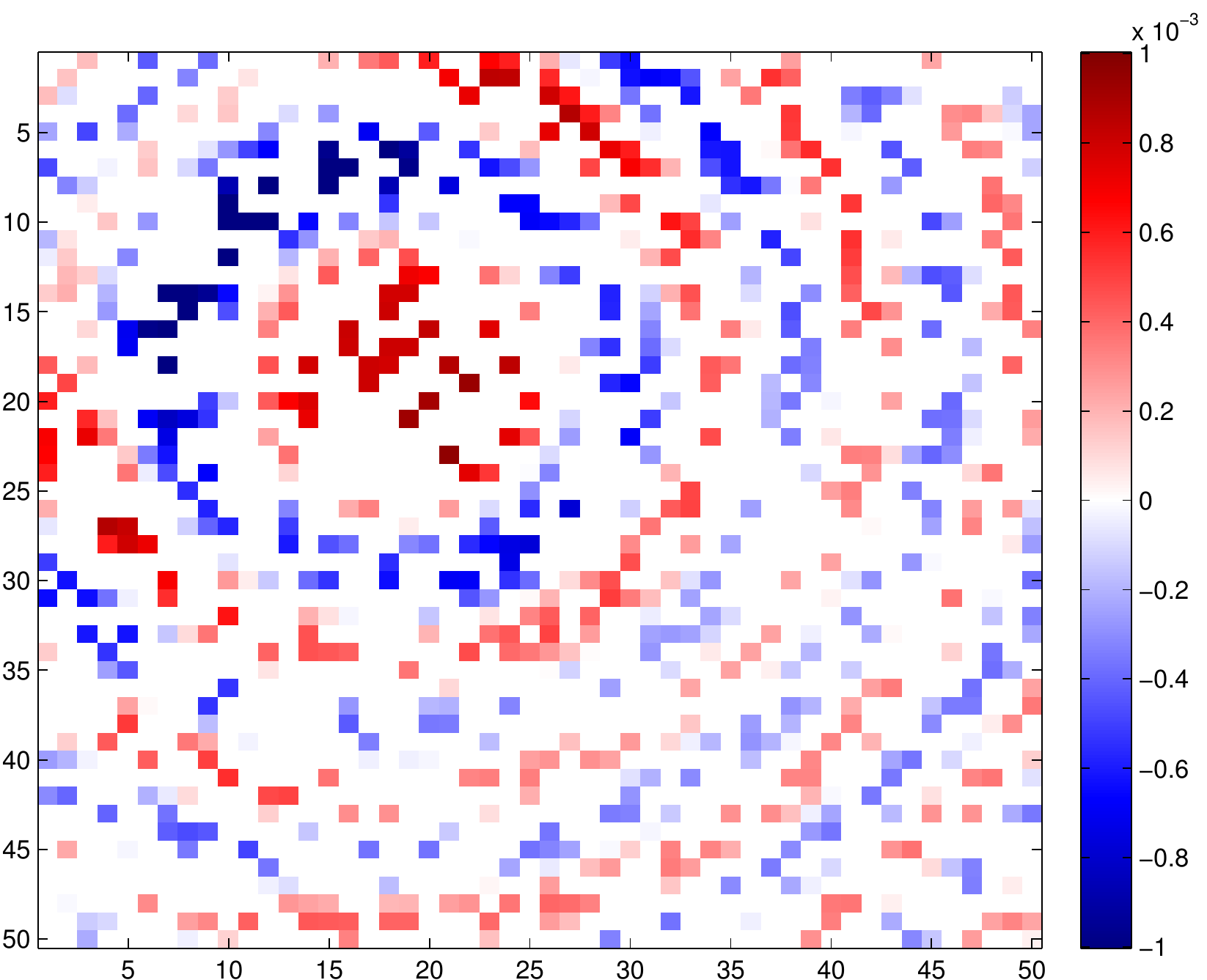}
} ~
\subfloat[HT(20,20) - SNR $20.4$ dB ]{
\includegraphics[width=0.3\columnwidth]{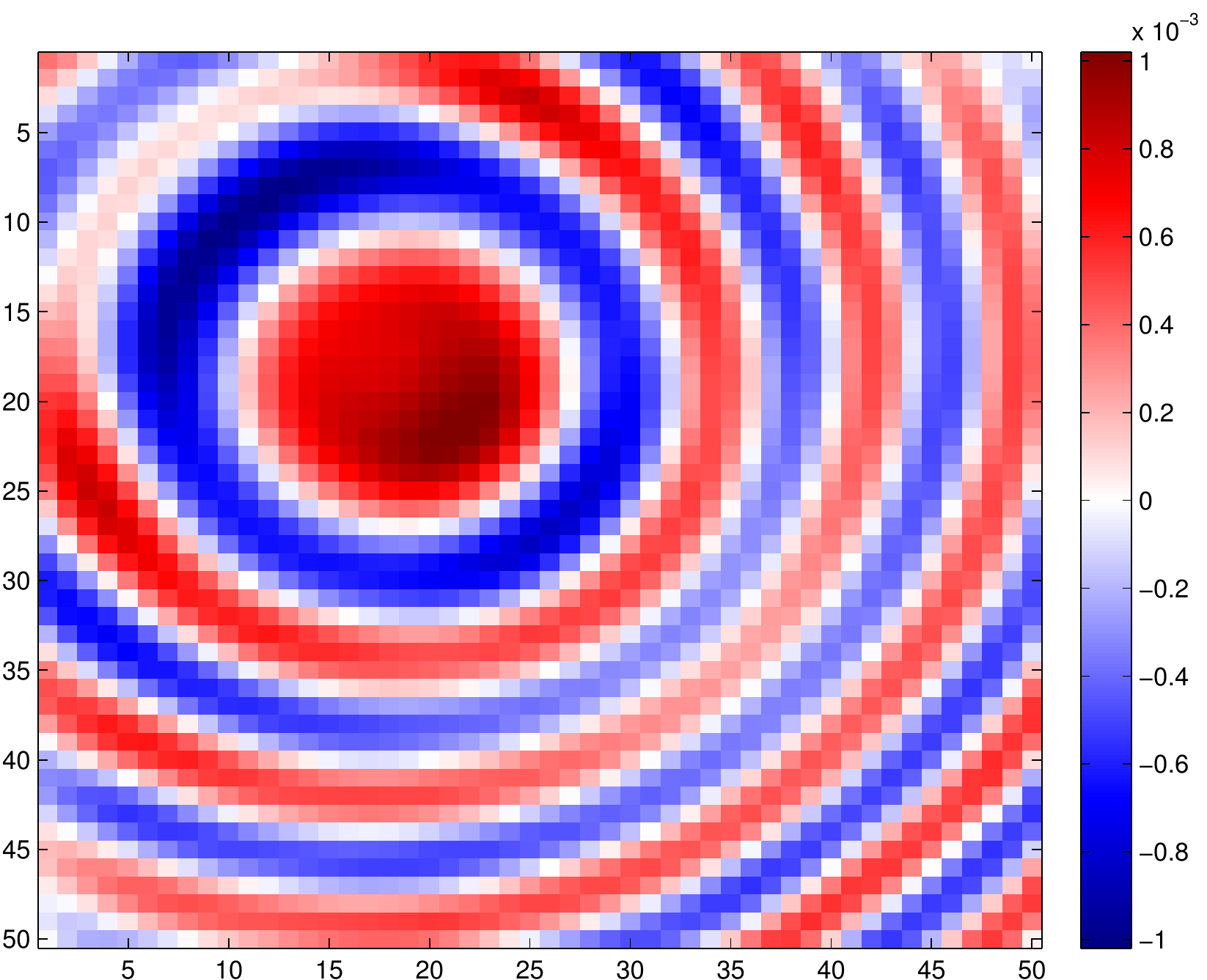}
} ~
\subfloat[geomCG(30)-5 - SNR $16.8$ dB ]{
\includegraphics[width=0.3\columnwidth]{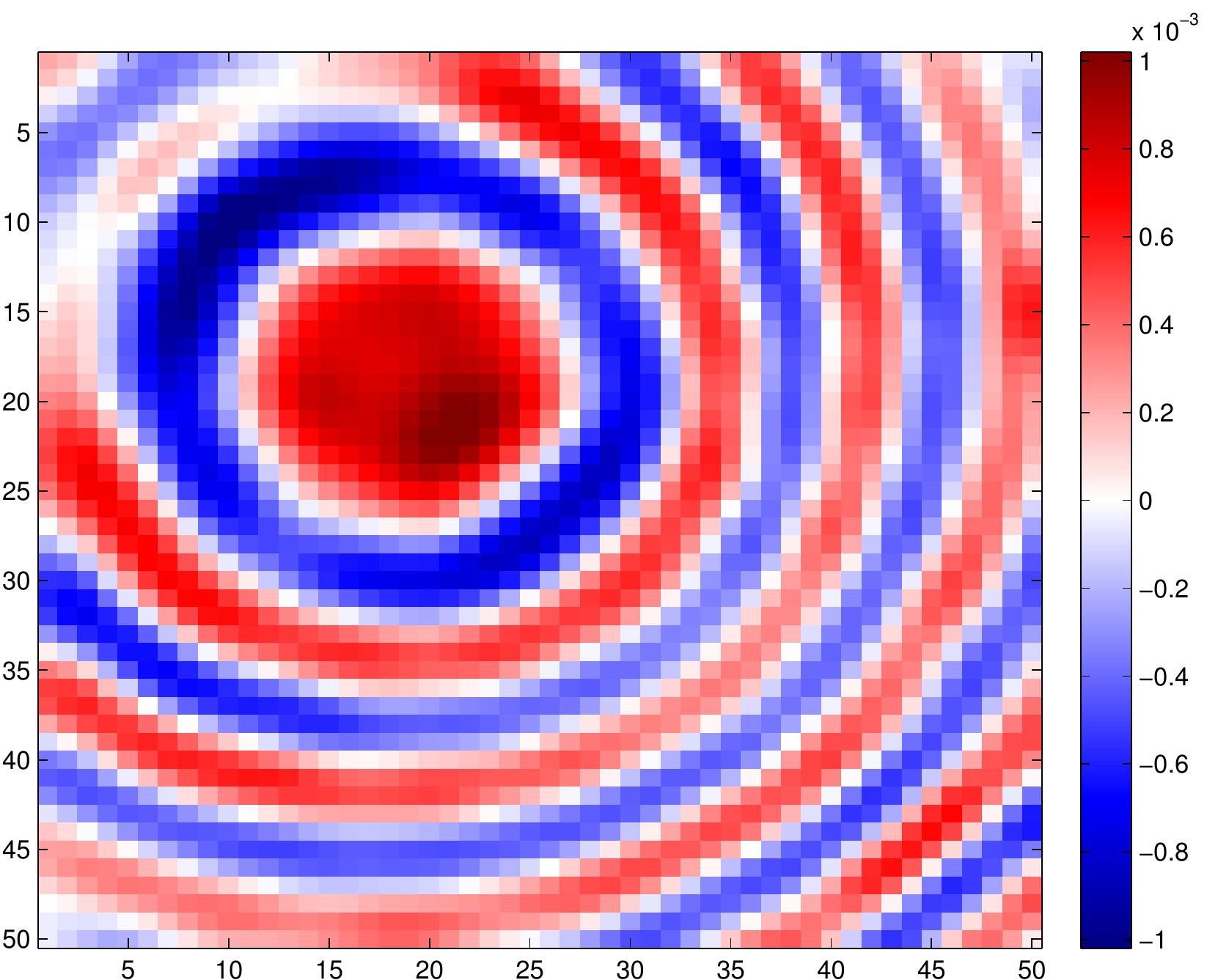}
} 
\caption{Reconstruction results for sampled receiver coordinates, best
  results for geomCG and HTOpt.  \emph{(a-c)} 90\% missing
  receivers. \emph{(d-f)}: 70\% missing receivers. }
\label{fig:singlereflector-missingrecs}
\end{figure}

\begin{table}
 \centering 
\begin{tabular}{c | c c | c c | c c } 
\Xhline{2\arrayrulewidth}  \\ 
& \multicolumn{6}{c}{\centering Single reflector data - sampling percentage (missing points) }\\ 
&\multicolumn{2}{c}{\centering 10\%} & \multicolumn{2}{c}{\centering 30\%} & \multicolumn{2}{c}{\centering 50\%} \\ 
& SNR [dB] & time [s] & SNR [dB] & time [s] & SNR [dB] & time [s] \\ 
\hline
geomCG(20) - 0 & 28.5 & 1023 & 30.5 & 397 & 30.7 & 340 \\ 
geomCG(30) - 0 & -6.7 & 1848 & 21.8 & 3621 & 31.5 & 2321 \\ 
geomCG(30) - 5 & 16.1 & 492 & 13.8 & 397 & 15.5 & 269 \\ 
HTOpt(20,60) & 30.1 & 83 & 30.4 & 59 & 30.4 & 57 \\ 
HTOpt(20,80) & 30.3 & 121 & 30.8 & 75 & 30.8 & 53 \\ 
HTOpt(30,80) & \textbf{31.6} & 196 & \textbf{32.9} & 133 & \textbf{33.1} & 114 \\ 
\Xhline{2\arrayrulewidth}
\end{tabular}
\caption{Reconstruction results for single reflector data - missing points - mean SNR over 5 random training sets} 
\label{table:singlereflector-missingall} 
\end{table}

\begin{table}
 \centering 
\begin{tabular}{c | c c | c c | c c } 
\Xhline{2\arrayrulewidth}  \\ 
& \multicolumn{6}{c}{\centering Single reflector data - sampling percentage (missing receivers) }\\ 
&\multicolumn{2}{c}{\centering 10\%} & \multicolumn{2}{c}{\centering 30\%} & \multicolumn{2}{c}{\centering 50\%} \\ 
& SNR [dB] & time [s] & SNR [dB] & time [s] & SNR [dB] & time [s] \\ 
\hline
geomCG(20) - 0 & -5.1 & 899 & 9.9 & 898 & 18.5 & 891 \\ 
geomCG(30) - 0 & -3.6 & 1796 & -4.7 & 1834 & 6.1 & 1802 \\ 
geomCG(30) - 5 & -6.4 & 727 & 11.1 & 670 & 14.2 & 356 \\ 
HTOpt(20,20) & \textbf{6.1} & 111 & \textbf{19.8} & 101 & 20.1 & 66 \\ 
HTOpt(30,20) & 2.8 & 117 & 18.1 & 109 & 19.8 & 94 \\ 
HTOpt(30,40) & 0.0 & 130 & 13.4 & 126 & \textbf{21.6} & 108 \\ 
\Xhline{2\arrayrulewidth}
\end{tabular}
\caption{Reconstruction results for single reflector data - missing receivers - mean SNR over 5 random training test sets} 
\label{table:singlereflector-missingrecs} 
\end{table}

\subsection{Performance}
We investigate the empirical performance scaling of our approach as $N, d, K,$ and $|\Omega|$ increase, as well as the number of processors for the parallel case, in Figure \ref{fig:performance} and Figure \ref{fig:performance2}. Here we denote the use of \Cref{alg:riemmanngrad} as the ``dense'' case and Figure \ref{alg:riemmgradsparse} as the ``sparse'' case. We run our optimization code in Steepest Descent mode with a single iteration for the line search, and average the running time over $10$ iterations and $5$ random problem instances.  Our empirical performance results agree very closely with the theoretical complexity estimates, which are $O(N^d K)$ for the dense case and $O(|\Omega| d K^3)$ for the sparse case. Our parallel implementation for the sparse case scales very close to the theoretical time $O(1/\text{\# processors})$.

\begin{figure}
\centering
\subfloat[Fixed $K, d$, varying $N$, $|\Omega| = 1000N$] {
\includegraphics[scale=0.4]{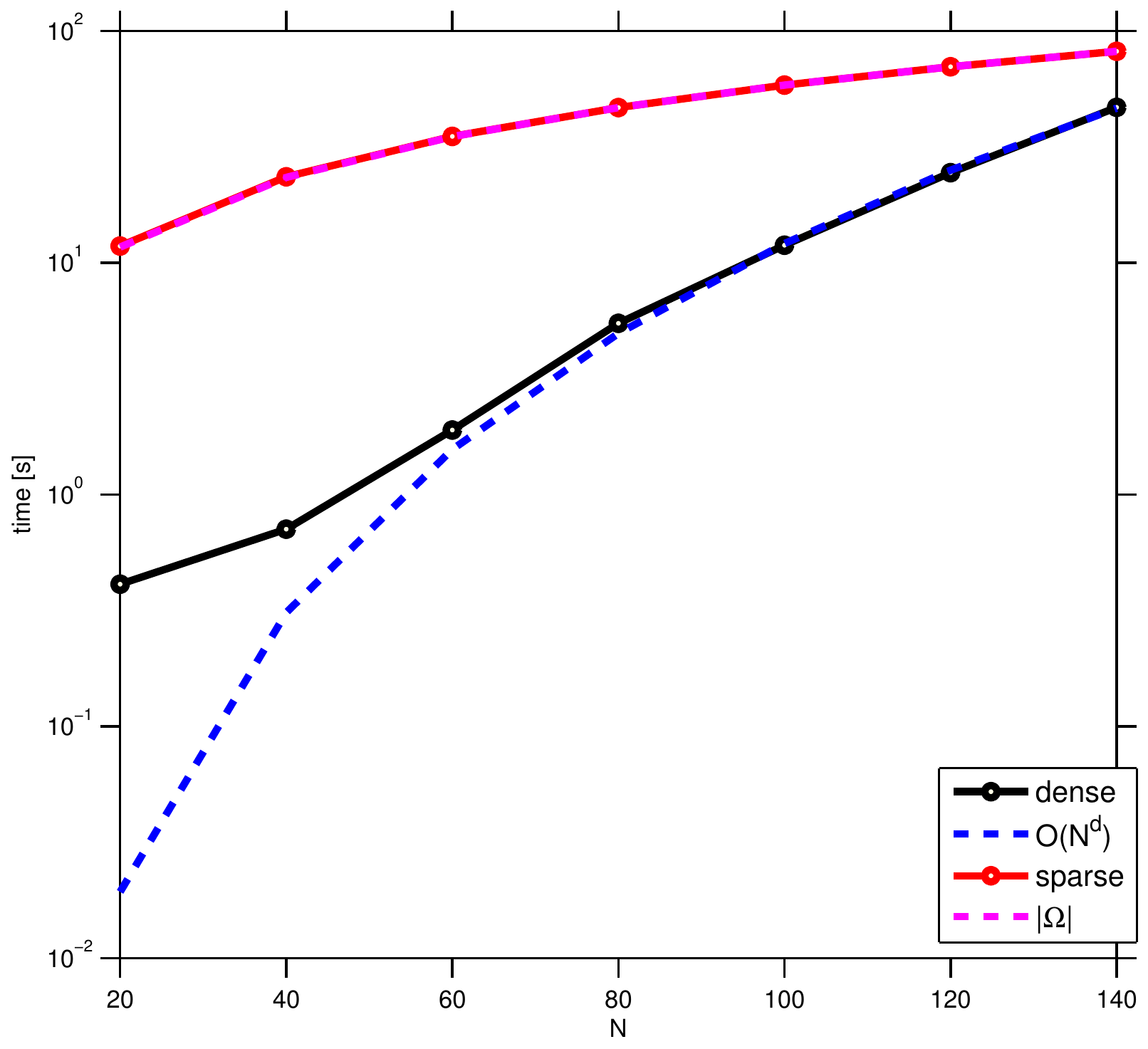}
} ~
\subfloat[Fixed $N, d, |\Omega|$, varying $K$]{
\includegraphics[scale=0.4]{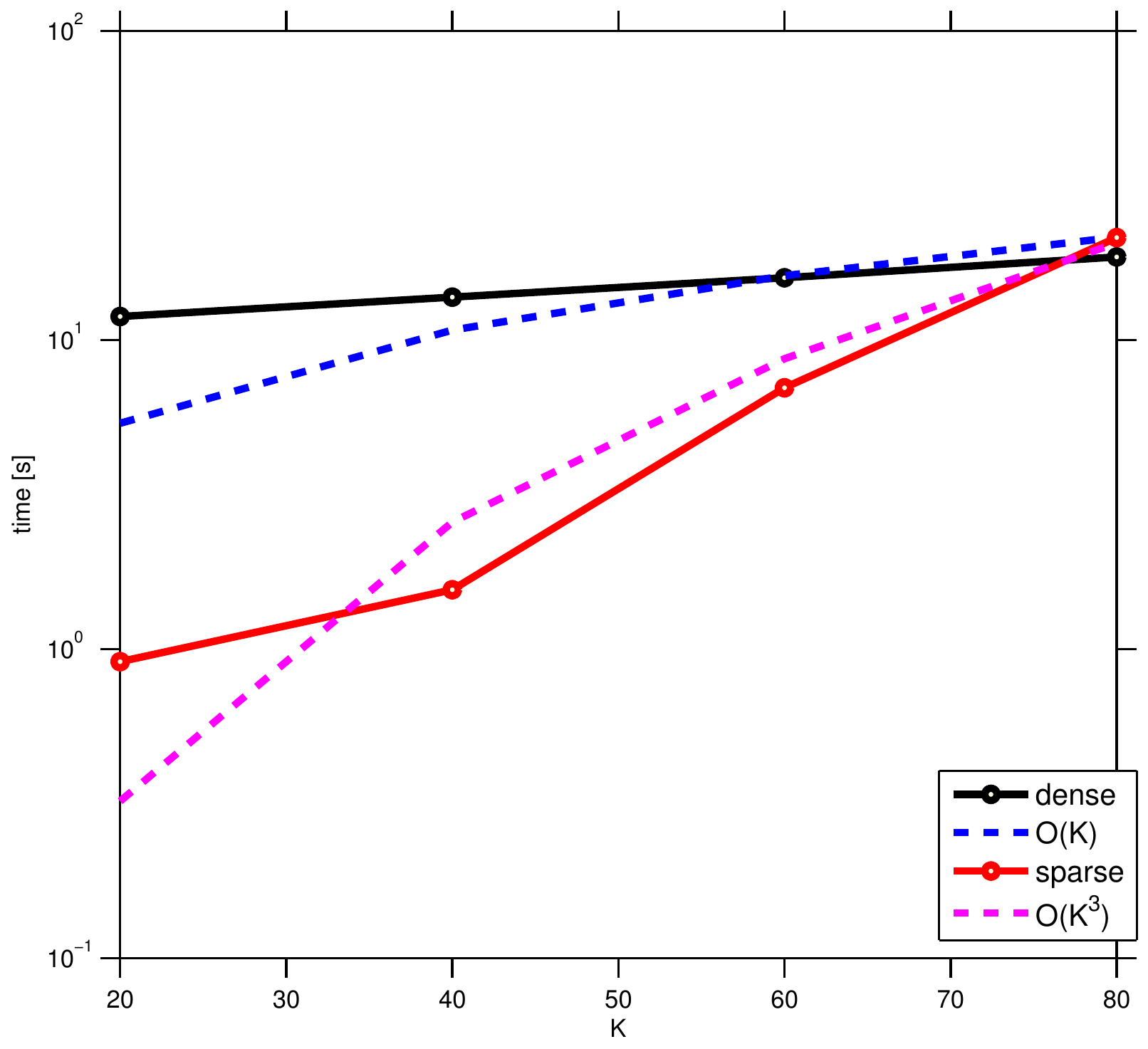}
} 
\caption{ Dense \& sparse objective, gradient performance. }
\label{fig:performance}
\end{figure}

\begin{figure}
\centering
\subfloat[ Fixed $N, K, |\Omega|$, varying $d$]{
		\includegraphics[scale=0.4]{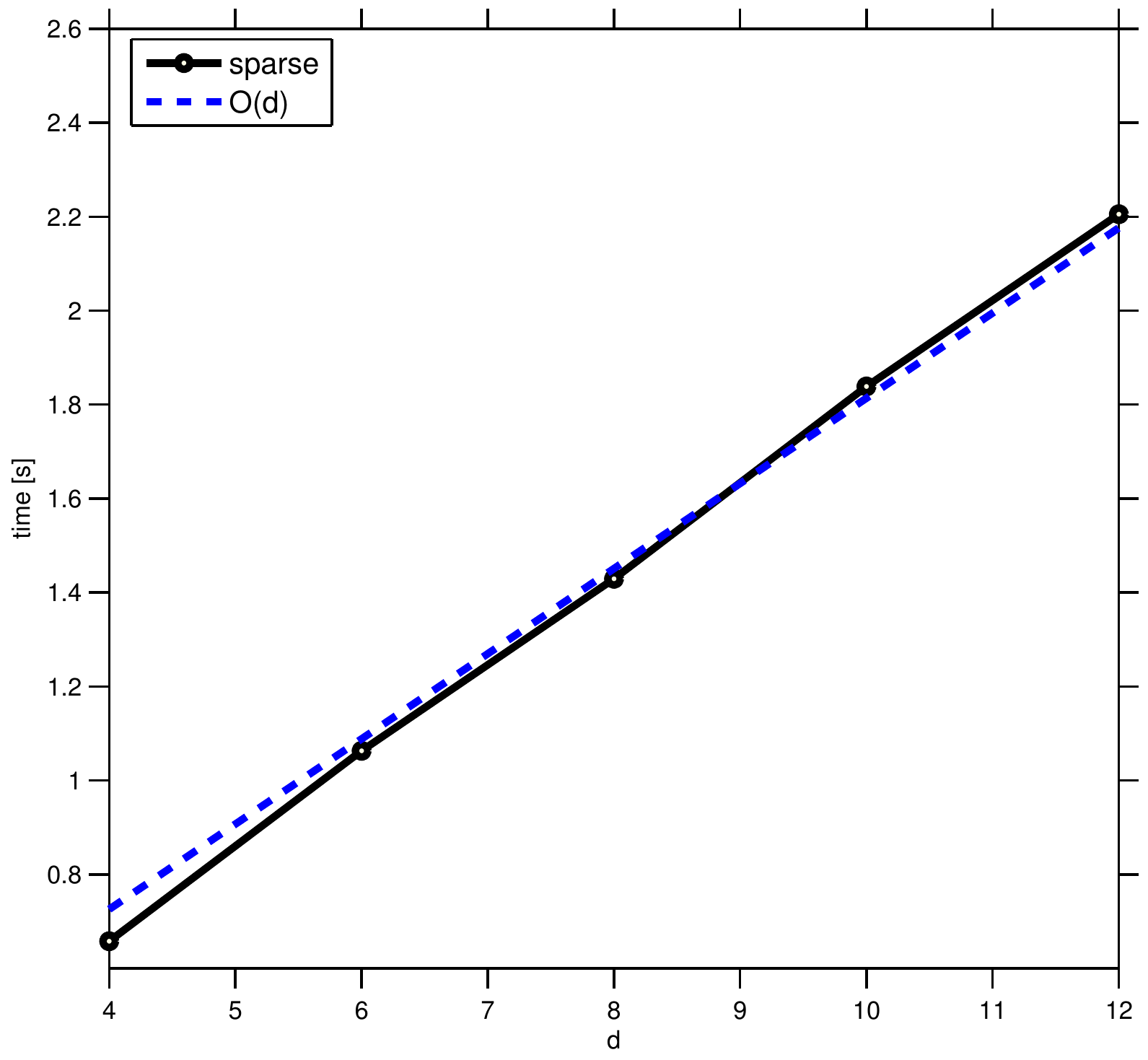}
} ~
\subfloat[ Fixed $N, K, d, \Omega$, varying number of processors ]{
		\includegraphics[scale=0.5]{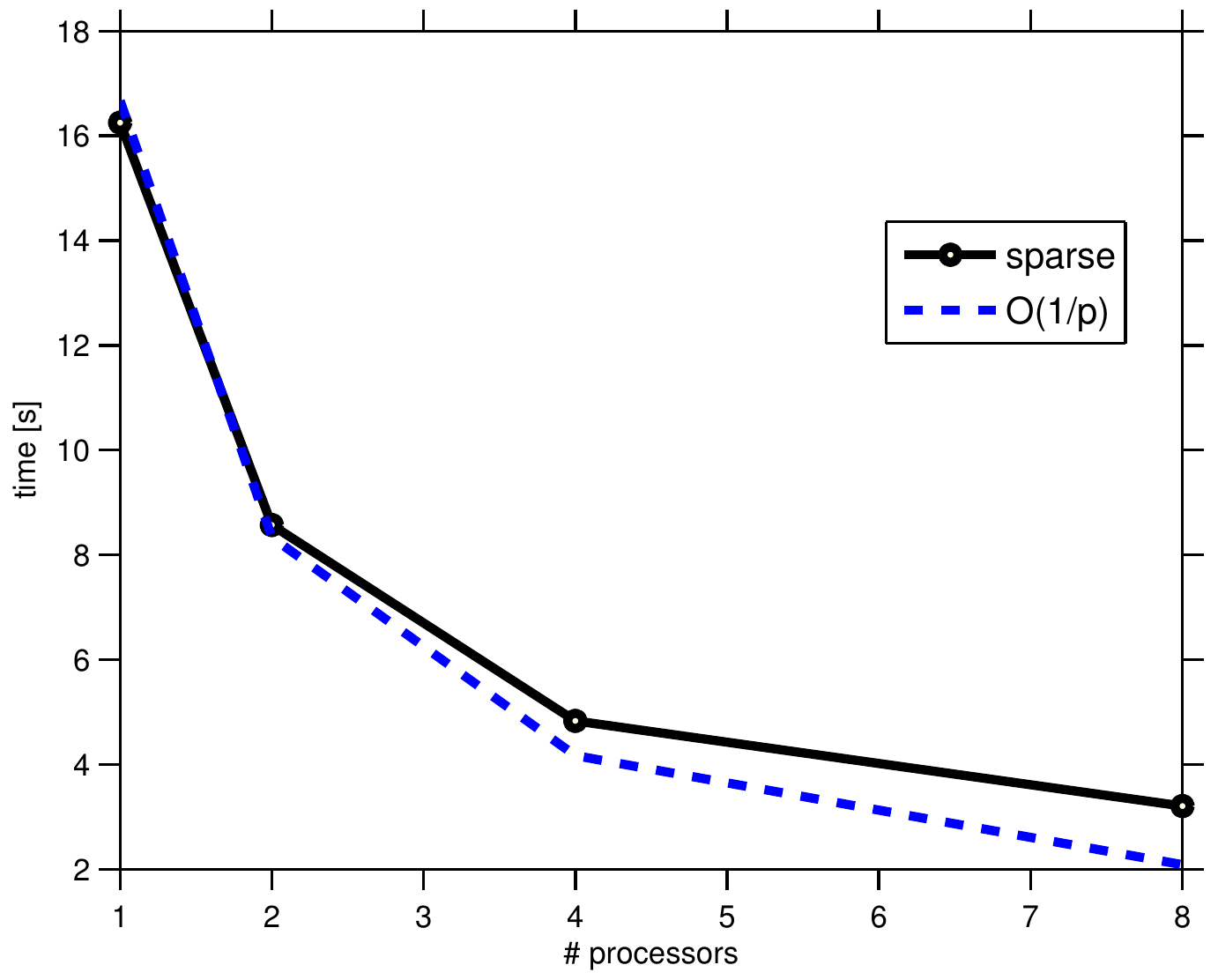}
} 
\caption{ Sparse objective, gradient performance. }
\label{fig:performance2}
\end{figure}

\subsection{Synthetic BG Compass data}
This data set was provided to us by BG and consists of 5D data
generated from an unknown synthetic model. Here $n_{src} = 68$ and
$n_{rec} = 401$ and we extract frequency slices at
4.86 Hz, 7.34 Hz, and 12.3 Hz. On physical grounds, we expect a slower
decay of the singular values at higher frequencies and thus the problem is much more difficult at 12.3 Hz compared to 4.86 Hz. 

At these frequencies, the data has relatively low spatial frequency
content in the receiver coordinates, and thus we subsample the
receivers by a factor of $2$ to $n_{rec} = 201$, for the purposes of
speeding up the overall computation and ensuring that the intermediate
vectors in the optimization are able to fit in memory. Our overall data volume has dimensions $\tensor{D} \in \mathbb{R}^{68 \times 68 \times 201 \times 201}$.

We randomly remove varying amounts of receivers from this reduced data volume and
interpolate using 50 iterations of the GN method discussed earlier. We display several recovered slices for fixed source coordinates and varying receiver coordinates (so-called \emph{common source gathers} in seismic terminology) in \Cref{fig:bgdata75missingrecs}.

We summarize our recovery results for tensor completion on these data
sets from missing receivers in \Cref{table:snrrecs} and the various
recovery parameters we use in \Cref{table:recoveryparams}. When the
subsampling rate is extremely high (90\% missing receivers in these
examples), the recovery can suffer from overfitting issues, which
leads to spurious artifacts in the recovered volume and lower SNRs
overall. Using the Gramian-based regularization method discussed
earlier, we can mitigate some of those artifacts and boost recovered
SNRs, as seen in \Cref{fig:regularization}.

\section{Conclusions and discussion} 
In this work we have developed the algorithmic components to solve
optimization problems on the manifold of fixed-rank Hierarchical Tucker
tensors. By exploiting this manifold structure, we solve the tensor
completion problem where the tensors of interest exhibit low-rank
behavior. Our algorithm is computationally efficient because we mostly
rely on operations on the small HT parameter space. The manifold
optimization itself guarantees that we do not run into convergence
issues, which arise when we ignore the quotient structure of the HT
format. Our application of this framework to seismic
examples confirms the validity of our new approach and outperforms
existing Tucker-based approaches for large data volumes. To stabilize the recovery for high
subsampling ratios, we introduced an additional regularization term
that exploits properties of the Gramian matrices without the need to
compute SVDs in the ambient space. 

While the method clearly performs well on large-scale problems, there
are still a number of theoretical questions regarding the performance
of this approach. In particular, the generalization of matrix
completion recovery guarantees to the HT format remains an open
problem. As in many alternative approaches to matrix/tensor
completion, the selection of the rank parameters and regularization
parameters remain challenging both theoretically and from a practical
point of view. However, the paper clearly illustrates that the HT
format is a viable option to represent and complete high-dimensional
data volumes in a computationally feasible manner.

\section{Acknowledgements}
We would like to thank the sponsors of the SINBAD consortium for their
continued support. We would also like to thank the BG Group for
providing us with the Compass data set.

\begin{table}[H]
\centering
\begin{tabular}{c | c| c | c | c }
Frequency & \% Missing & Train SNR (dB) & Test SNR (dB) & Runtime (s)  \\
\hline
4.86 Hz & 25\% & 21.2  & 21 & 4033 \\
\hline
 & 50\% & 21.3  & 20.9 & 4169 \\
\hline
 & 75\% & 21.5  & 19.9 & 4333 \\
\hline
 & 90\% & 19.9  & 10.4 & 4679  \\
\hline
 & $90\%^*$ & $20.8^*$ & $13.0^*$ & 5043 \\
\hline	
7.34 Hz & 25\% & 17.3  & 17.0 & 4875 \\
\hline
 & 50\% & 17.4 & 16.9 & 4860  \\
\hline
 & 75\% & 17.7  & 16.5 & 5422 \\
\hline
 & 90\% & 16.6  & 9.82 & 4582 \\
\hline
 & $90\%^*$ & $16.6^*$  & $10.5^*$ & 4947 \\
\hline
12.3 Hz & 25\% & 14.9 & 14.2 & 5950 \\
\hline
& 50\% & 15.2 & 13.8 & 7083\\
\hline
& 75\% & 15.8 & 9.9 & 7387 \\
\hline
& 90\% & 13.9 & 5.39 & 4578 \\
\hline
& $90\%^*$ & $14^*$ & $6.5^*$ & 4966
\end{tabular}
\caption{ HT Recovery results - randomly missing receivers. Starred
  quantities are computed with regularization.  }
\label{table:snrrecs}
\end{table}

\begin{table}[H]
\centering
\begin{tabular}{c | c | c | c | c}
Frequency & $k_{x_{src} x_{rec}}$ & $k_{x_{src}}$ & $k_{x_{rec}} $ & HT-SVD SNR (dB) \\
\hline
4.86 Hz & 150 & 68 & 120 & 21.1 \\
\hline
7.34 Hz & 200 & 68 & 120 & 17.0\\
\hline
12.3 Hz & 250 & 68 & 150 & 13.9
\end{tabular}
\caption{ HT parameters for each data set and the corresponding SNR of
  the HT-SVD approximation of each data set. The 12.3 Hz data is
  of much higher rank than the other two data sets and thus is much
  more difficult to recover. }
\label{table:recoveryparams}
\end{table}

\begin{figure}
	\centering
        \captionsetup[subfigure]{labelformat=empty,position=top,width=120pt}
	\subfloat[True Data]{
		\includegraphics[scale=0.23]{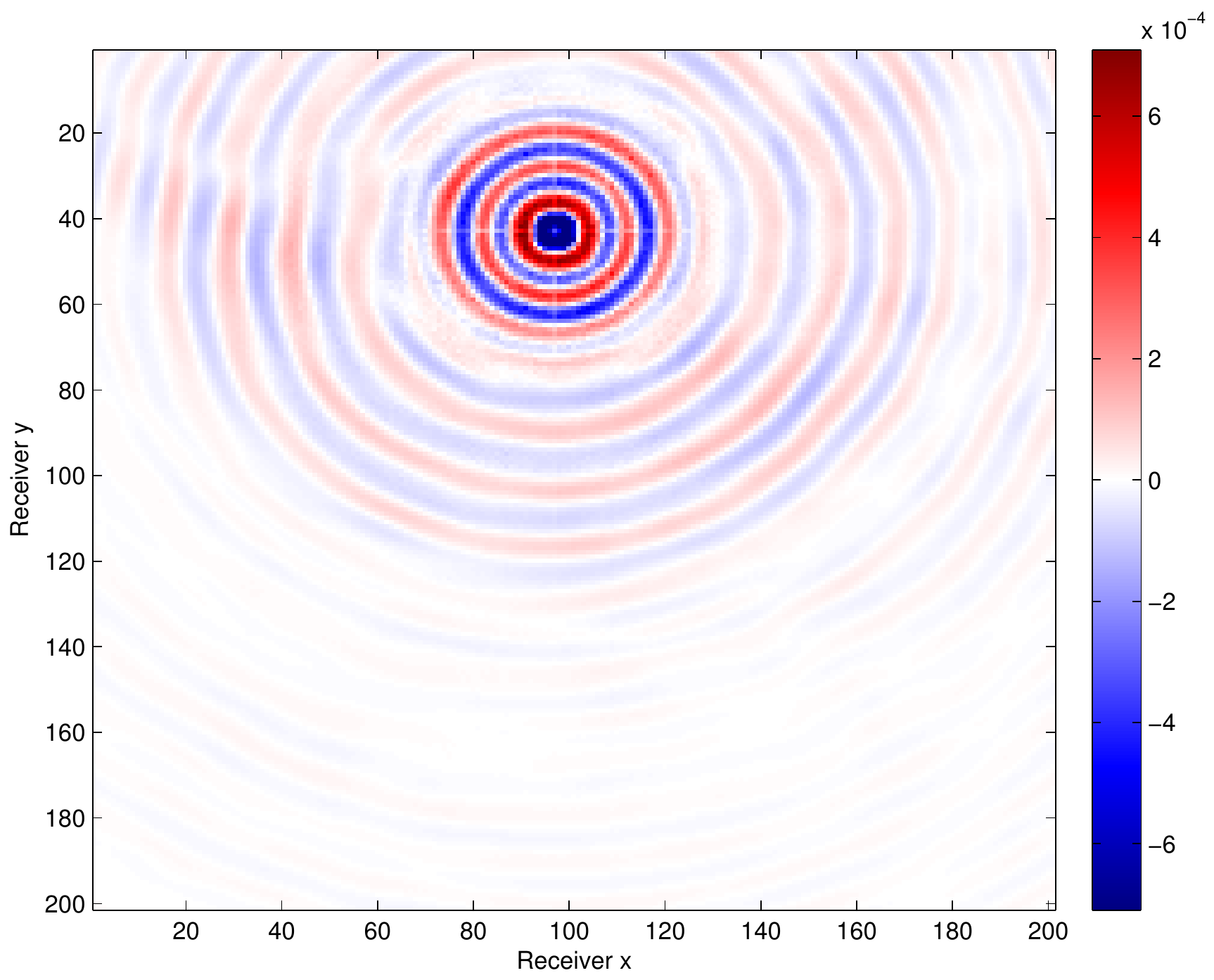}
	}
	\subfloat[Subsampled Data]{
		\includegraphics[scale=0.23]{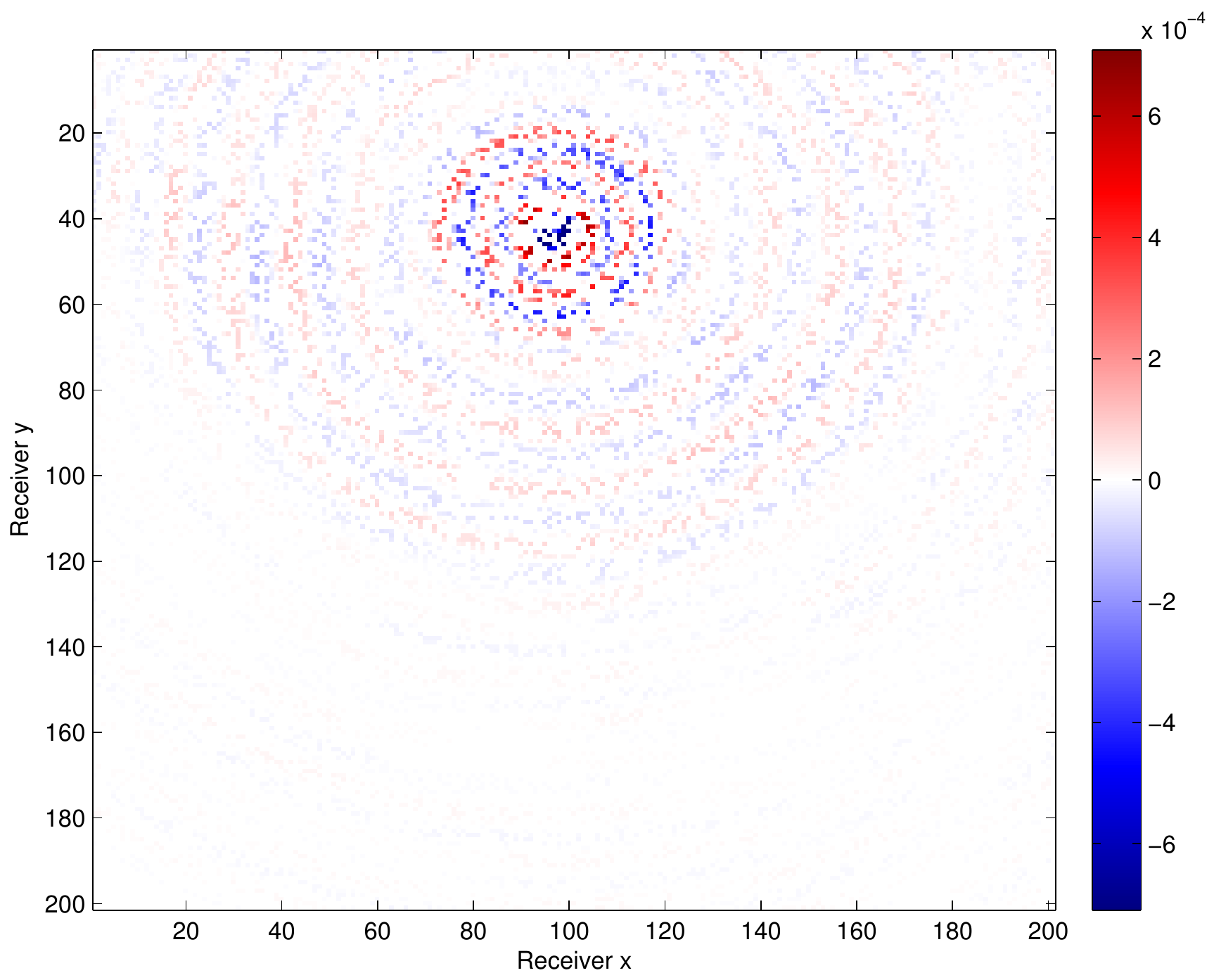}
	} 
	\subfloat[Interpolated Data - SNR 20 dB]{
		\includegraphics[scale=0.23]{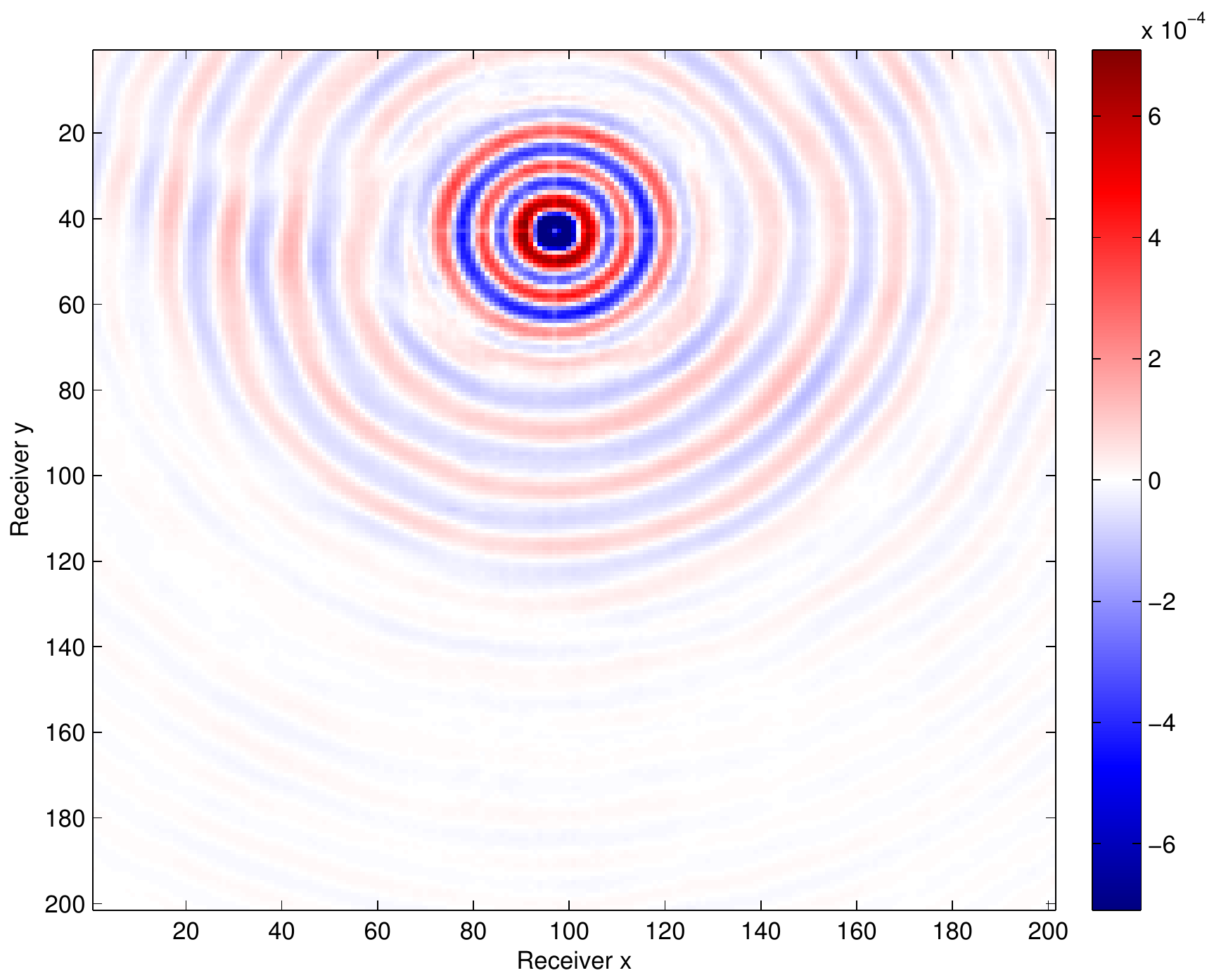}
	}
	\subfloat[Difference]{
		\includegraphics[scale=0.23]{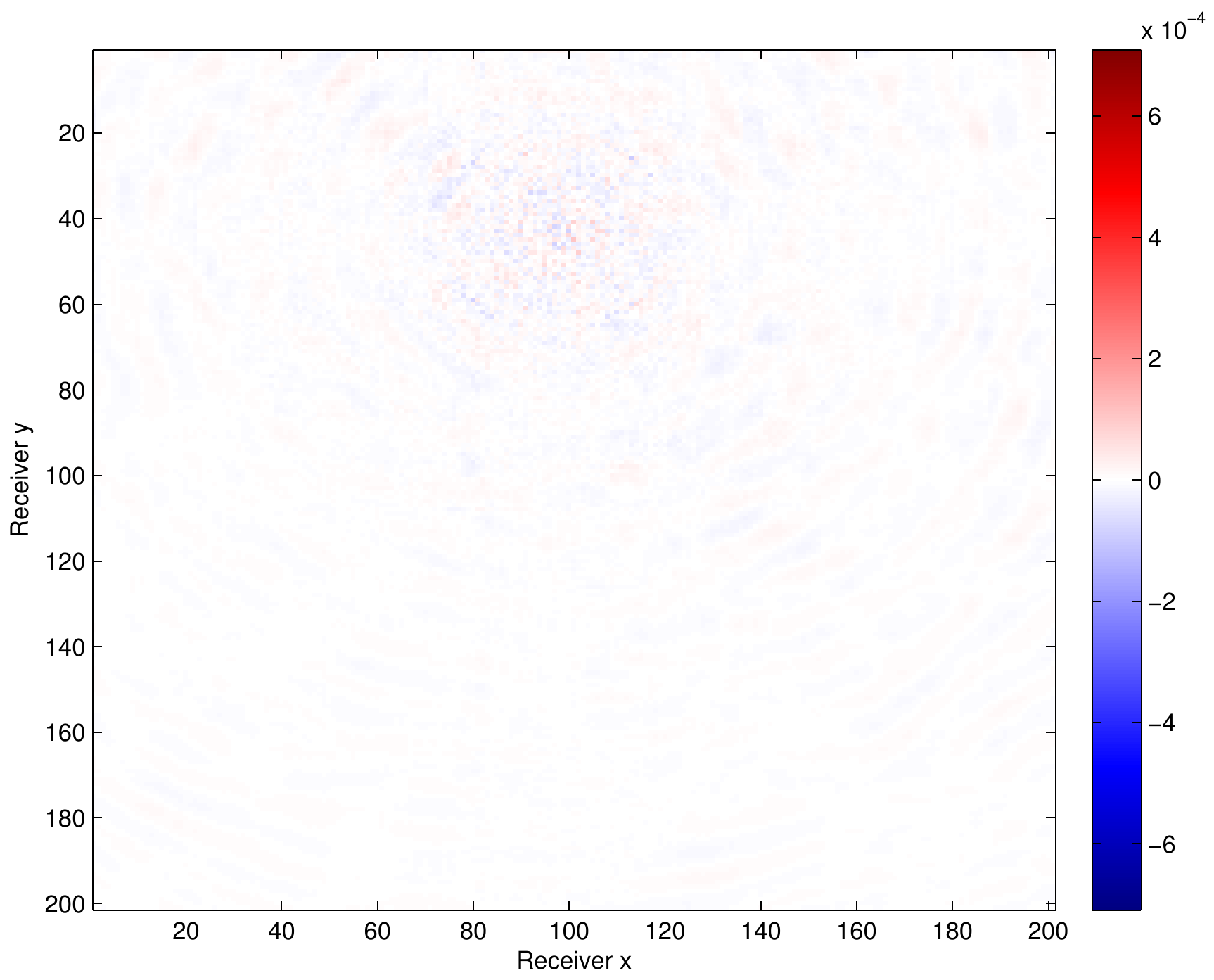}
	} \\
	 \captionsetup[subfigure]{labelformat=empty,position=top}
	\subfloat[True Data]{
		\includegraphics[scale=0.23]{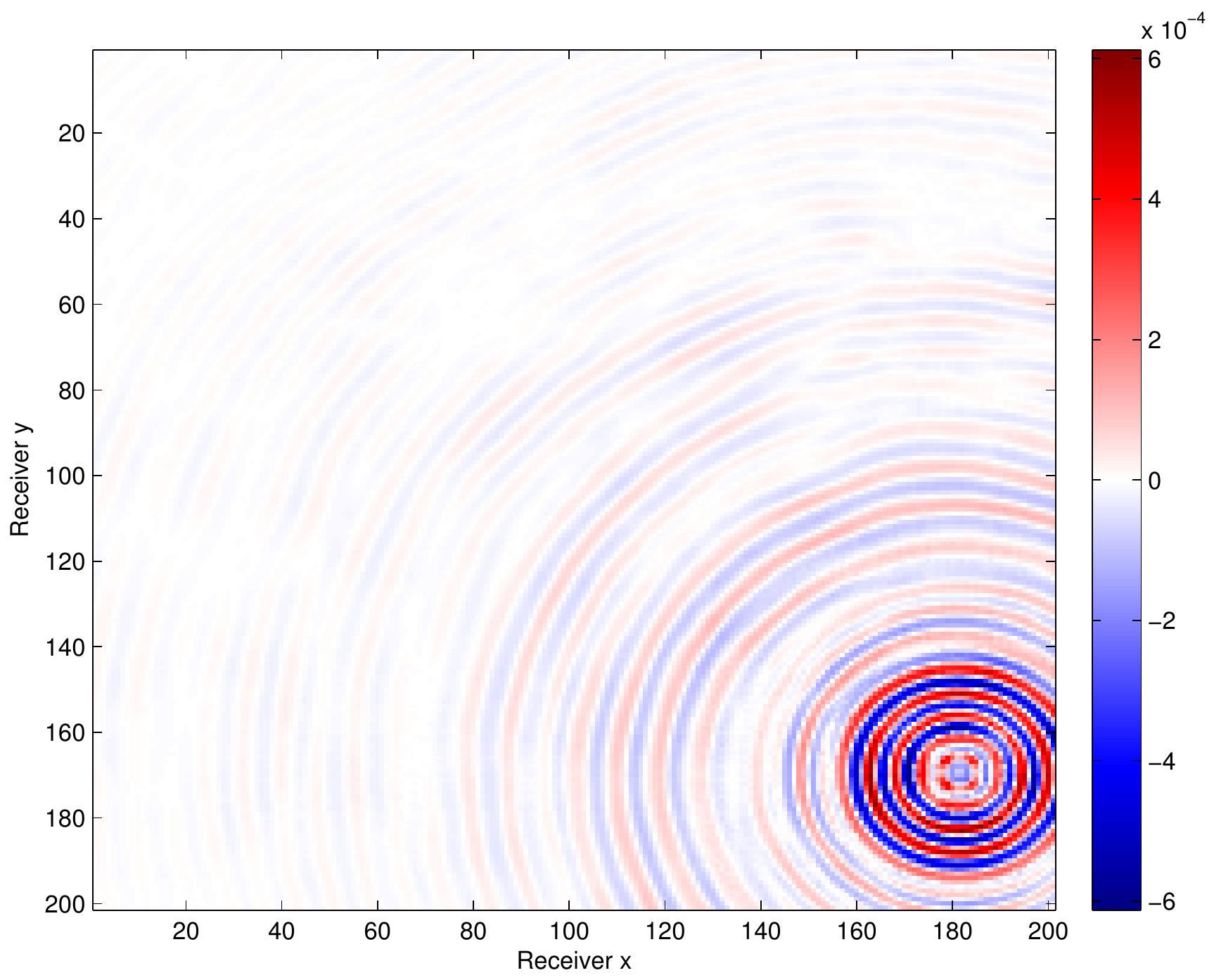}
	}
	\subfloat[Subsampled Data]{
		\includegraphics[scale=0.23]{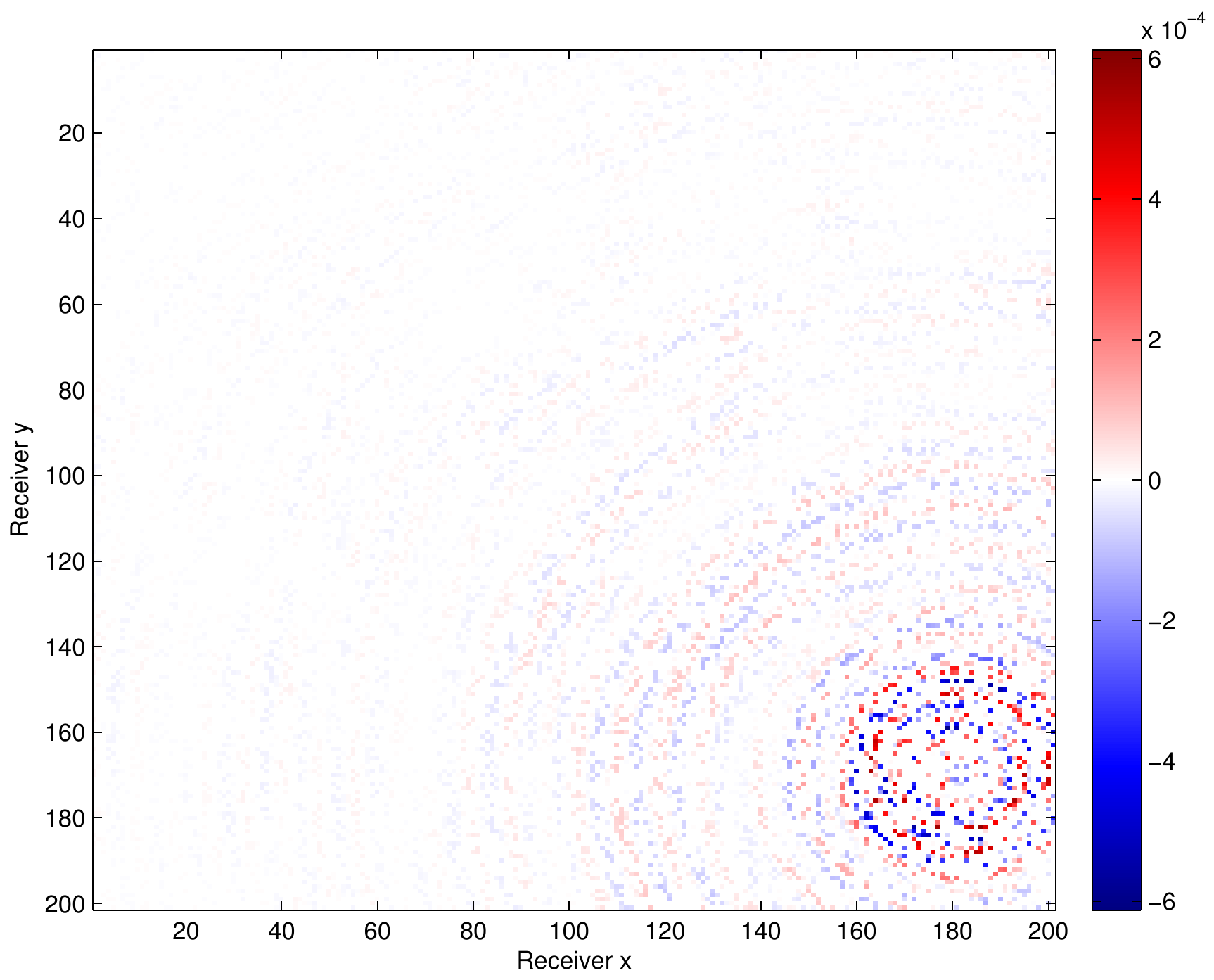}
	} 
	\subfloat[Interpolated Data - SNR 17.7 dB]{
		\includegraphics[scale=0.23]{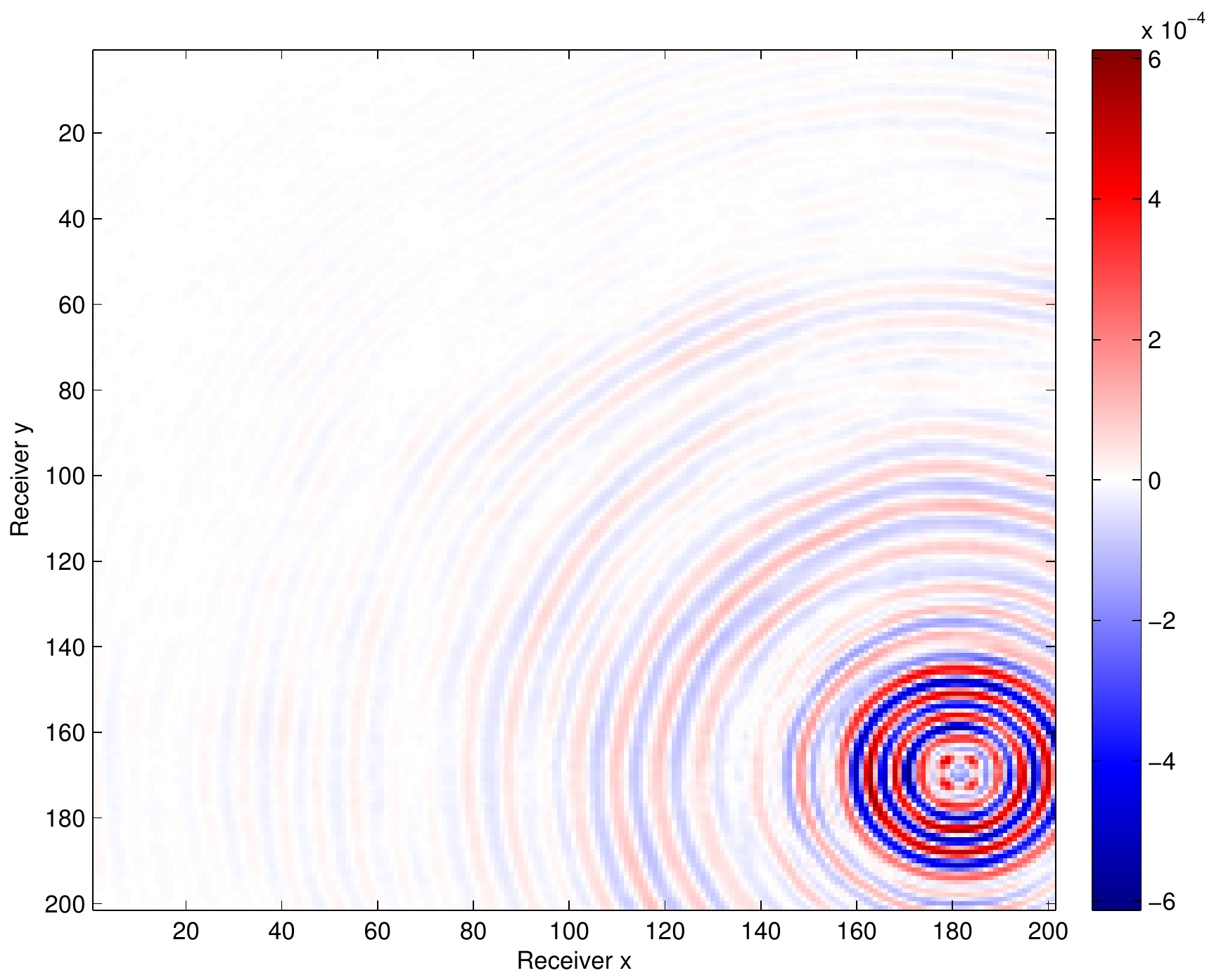}
	}
	\subfloat[Difference]{
		\includegraphics[scale=0.23]{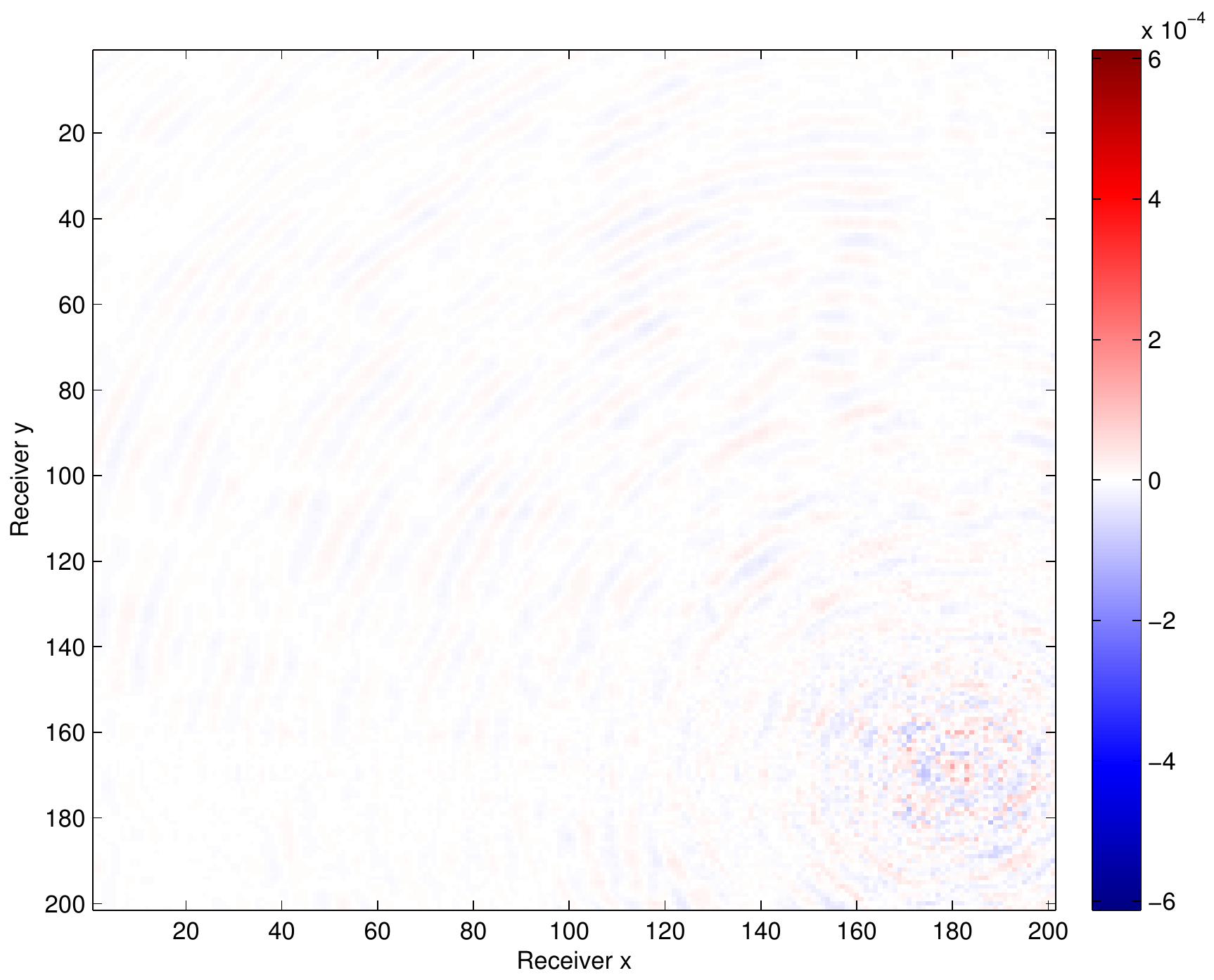}
	}
	\\
	\centering
	\subfloat[True Data]{
		\includegraphics[scale=0.23]{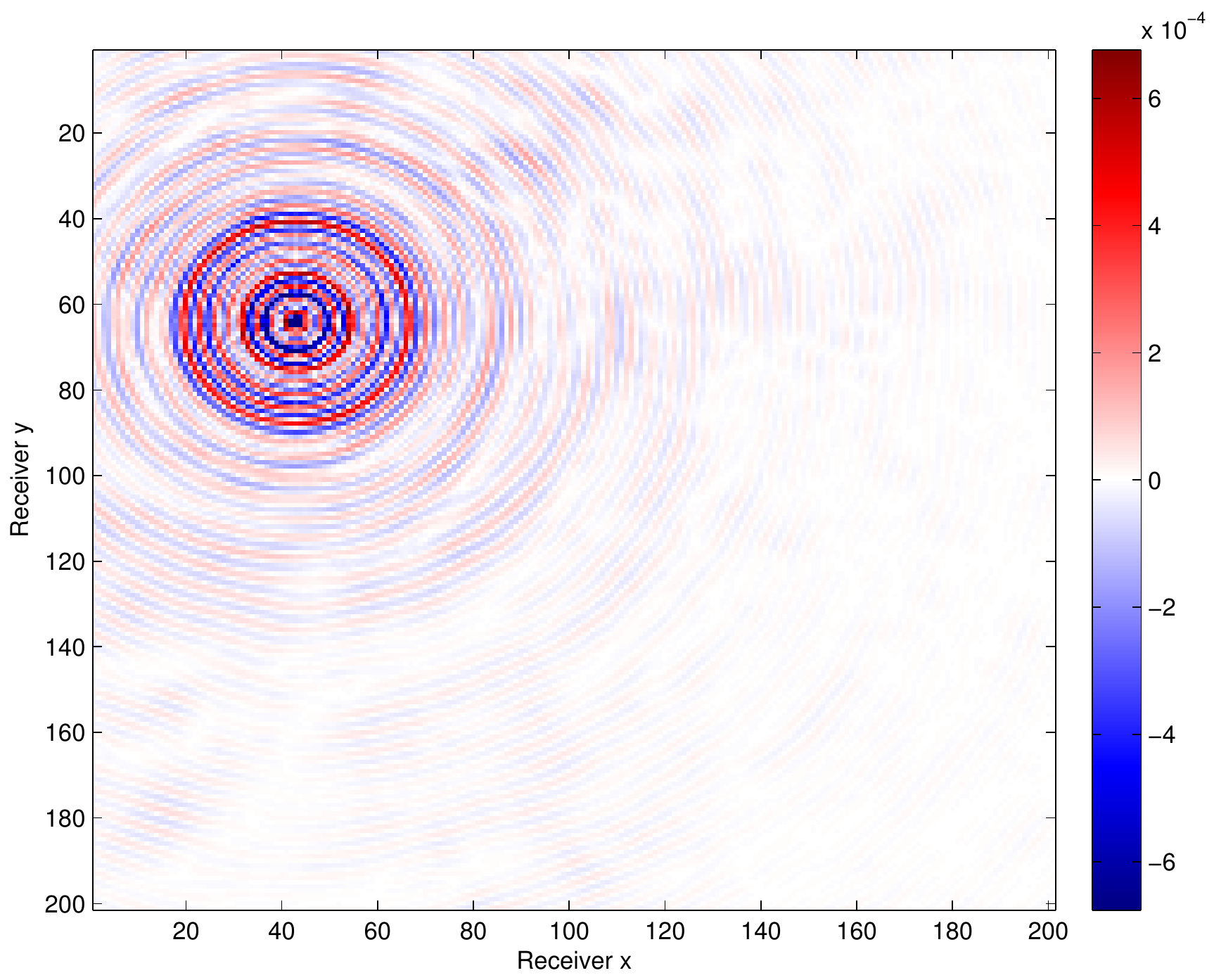}
	}
	\subfloat[Subsampled Data]{
		\includegraphics[scale=0.23]{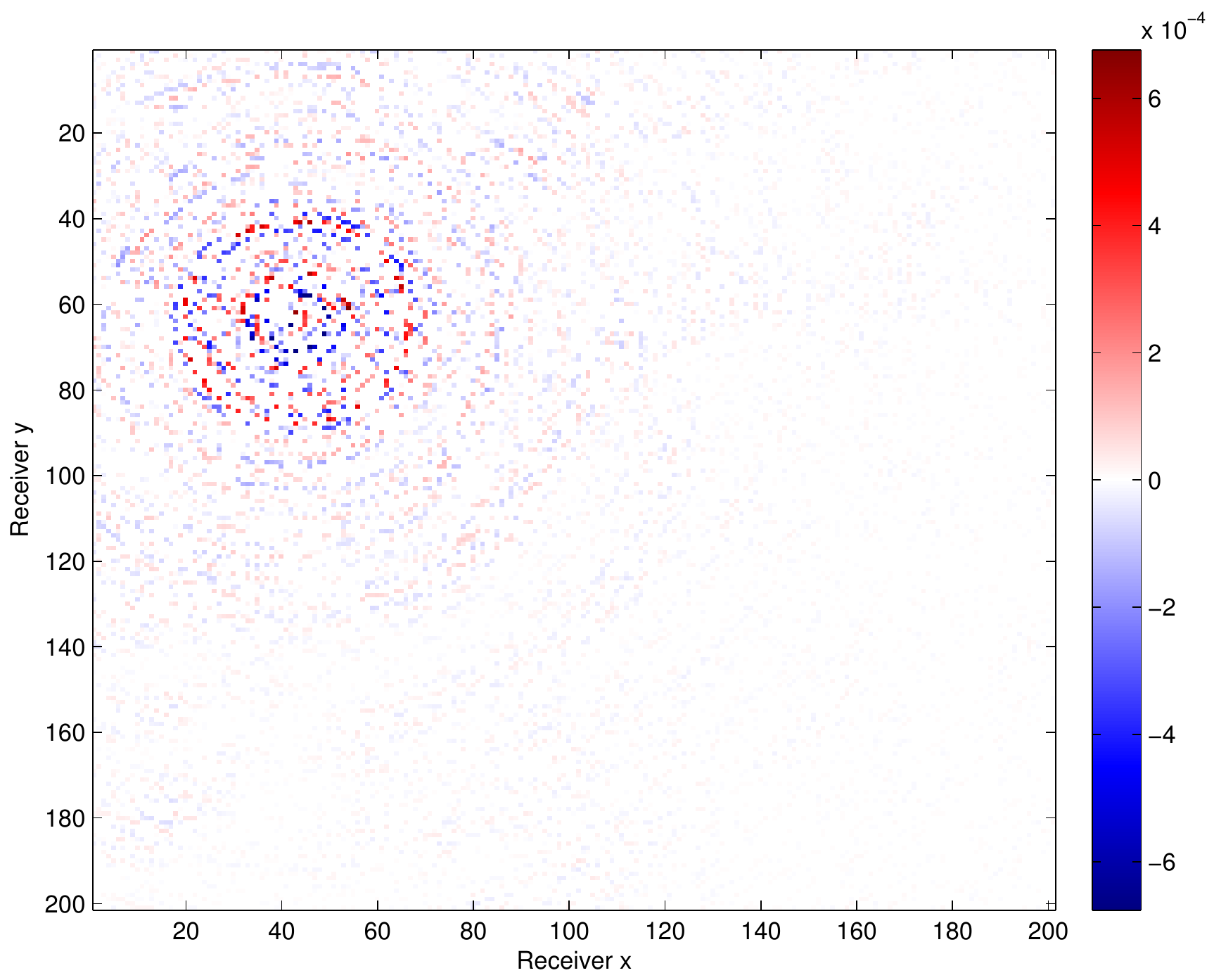}
	} 
	\subfloat[Interpolated Data - SNR 11.2 dB]{
		\includegraphics[scale=0.23]{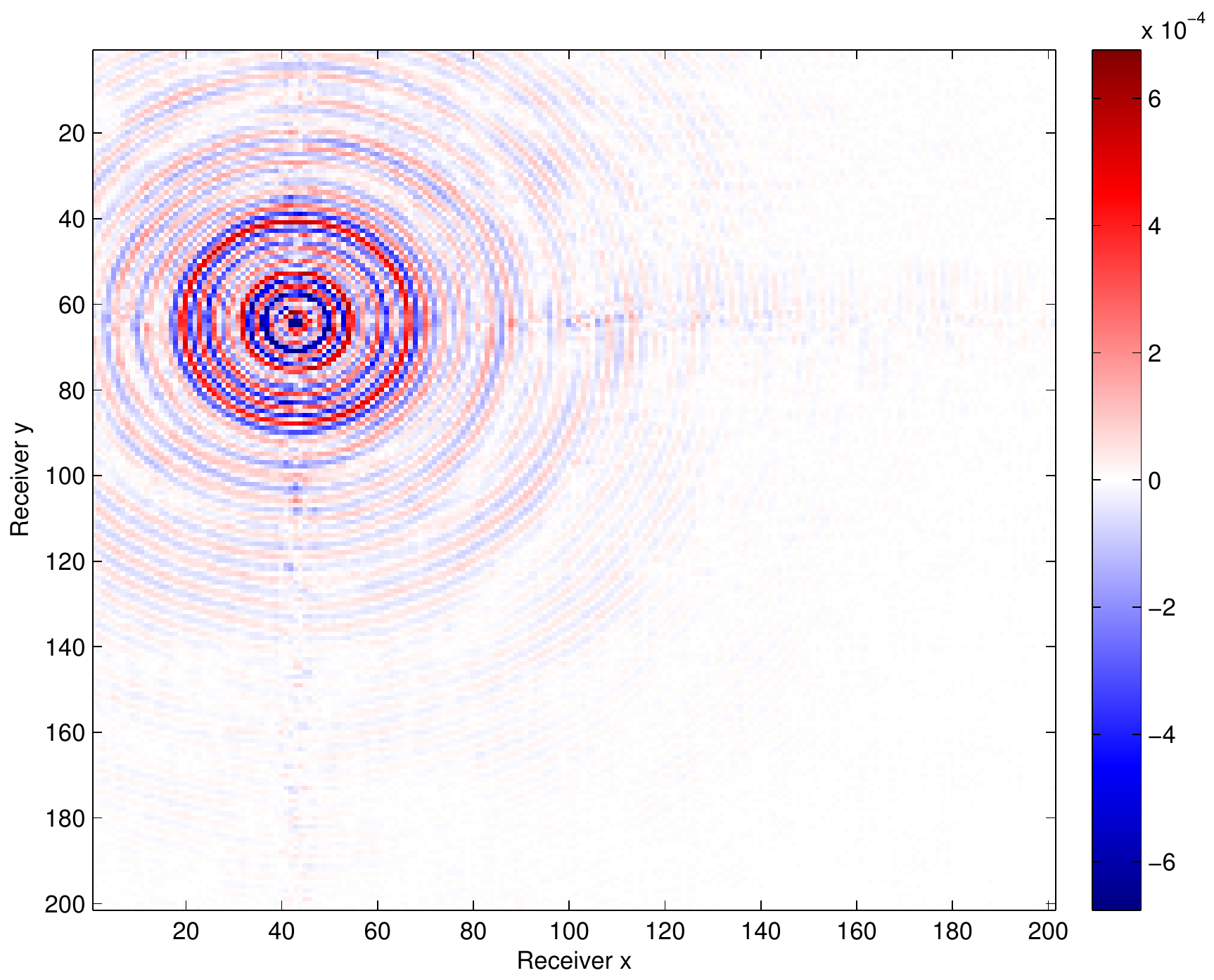}
	}
	\subfloat[Difference]{
		\includegraphics[scale=0.23]{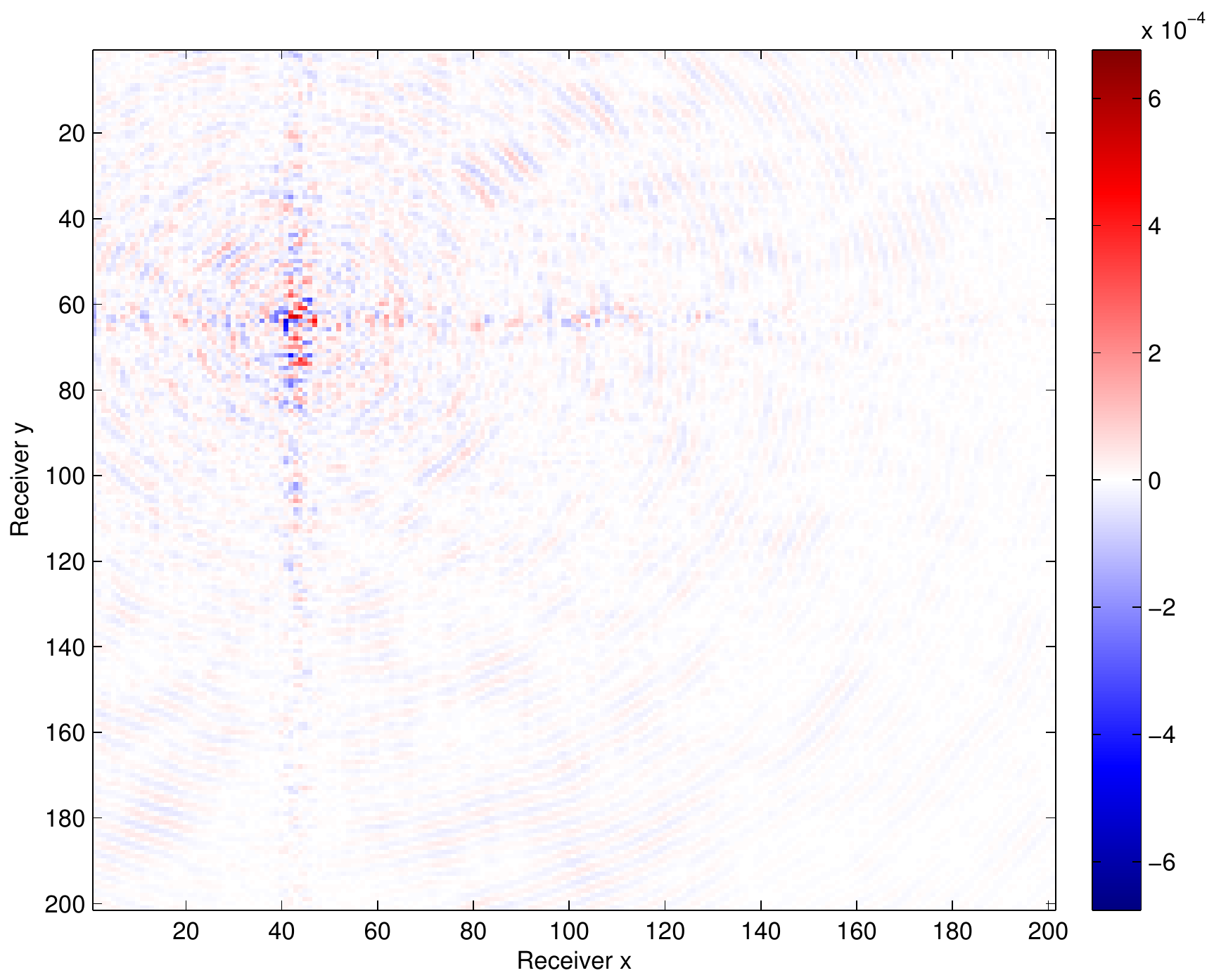}
	}
	\caption{75\% missing receivers, fixed source coordinates. \emph{Top:} 4.68 Hz, \emph{Middle:} 7.34 Hz, \emph{Bottom:} 12.3 Hz. }
	\label{fig:bgdata75missingrecs}
\end{figure}
\clearpage
\begin{figure}
\centering
\captionsetup[subfigure]{labelformat=empty,position=top,width=125pt}
\subfloat[True Data]{
		\includegraphics[scale=0.23]{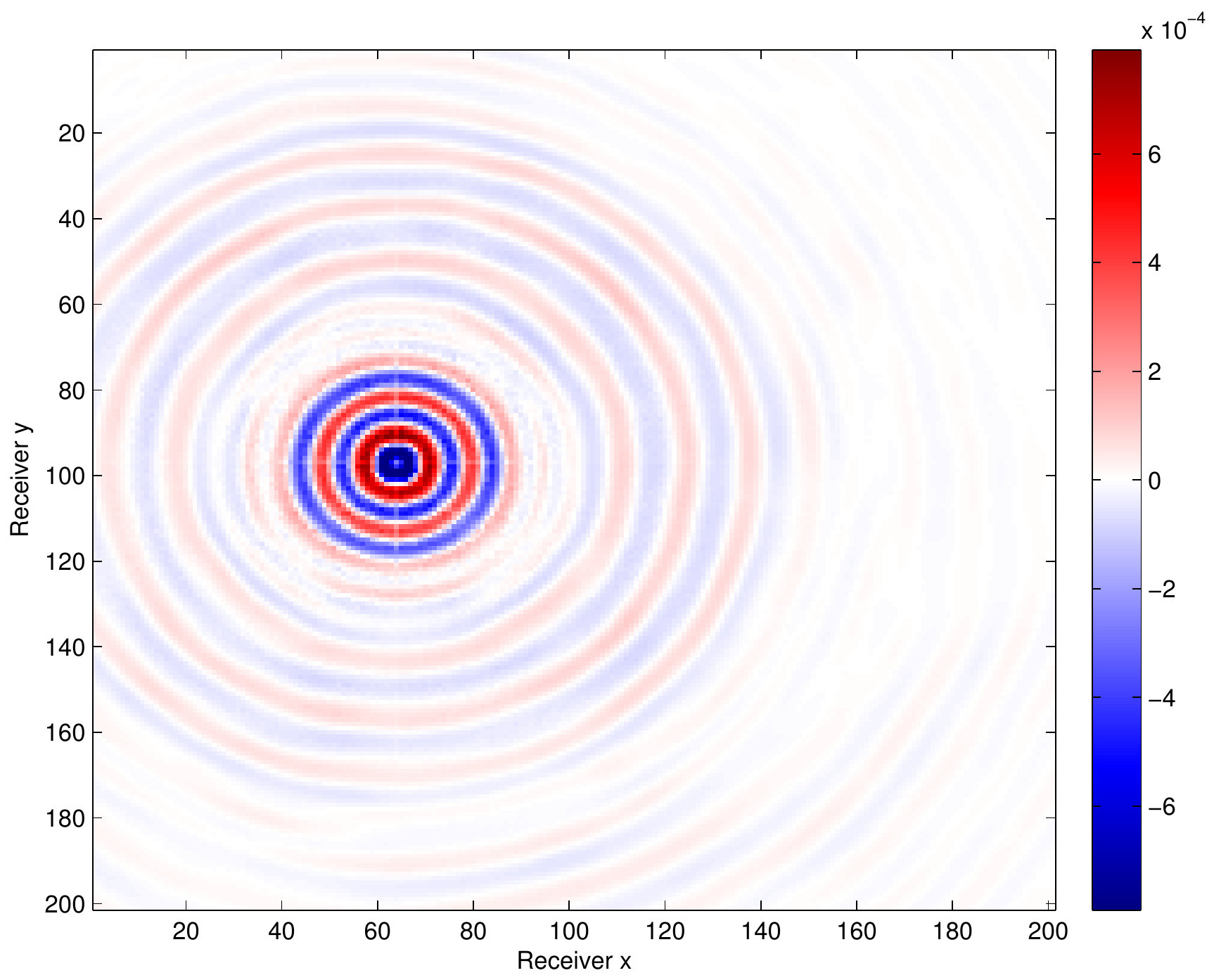}
		\label{fig:regrecovery-truedata}
	}
	\subfloat[Subsampled Data]{
		\includegraphics[scale=0.23]{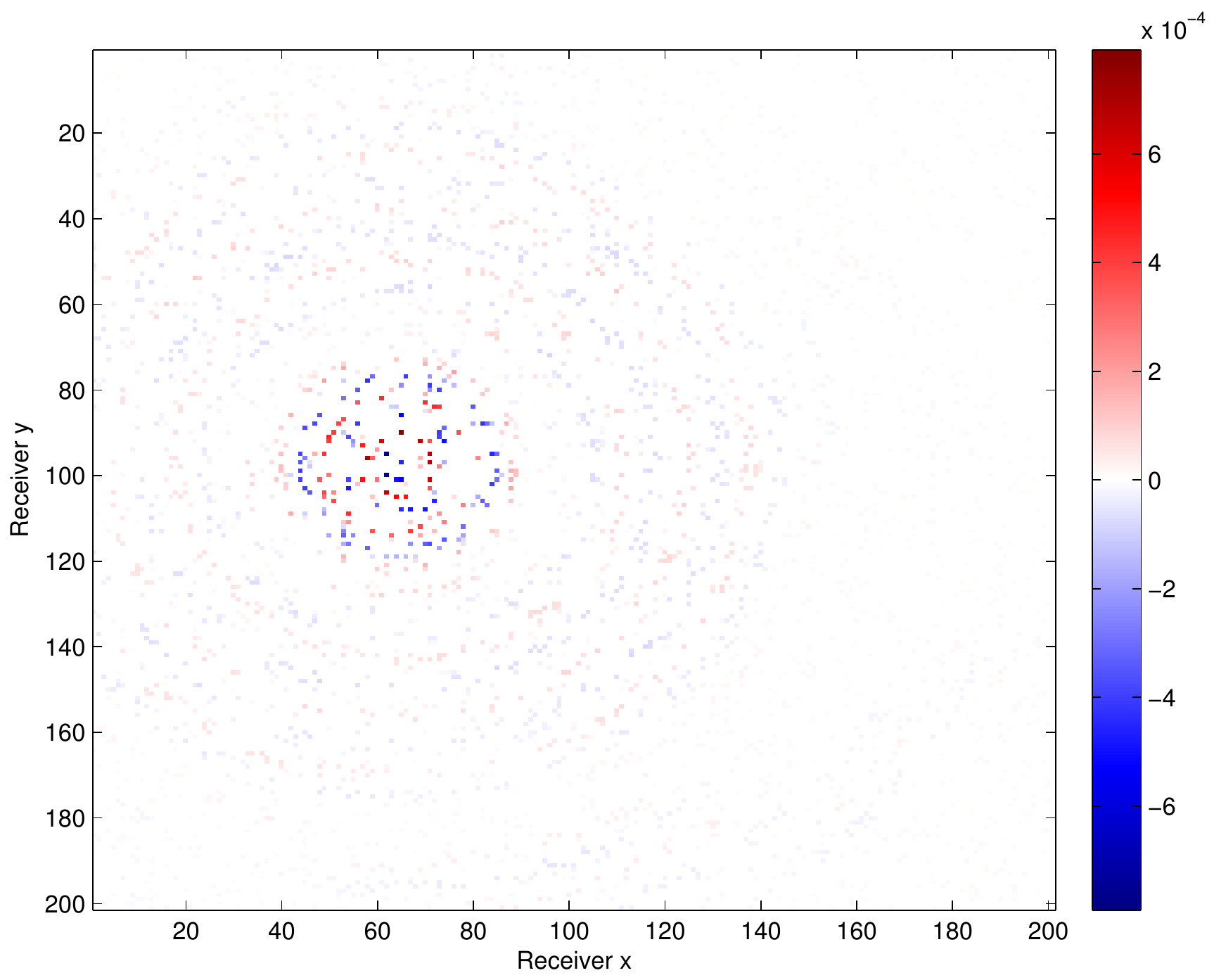}
		\label{fig:regrecovery-subdata}
	}
	\subfloat[No Regularization - SNR 10.4 dB ]{
		\includegraphics[scale=0.23]{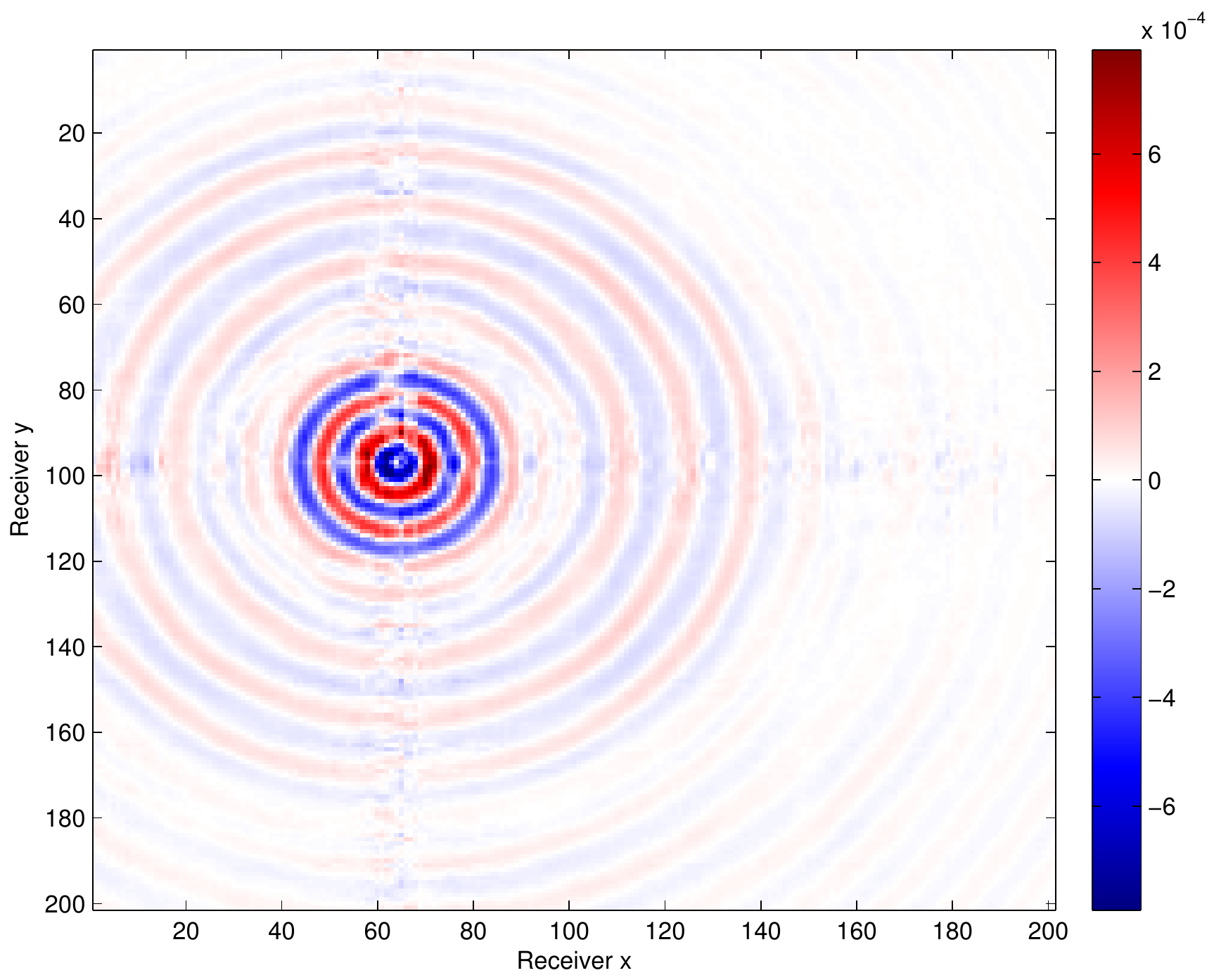}
		\label{fig:regrecovery-noreg}
	}
	\subfloat[Regularization - SNR 13.9 dB]{
		\includegraphics[scale=0.23]{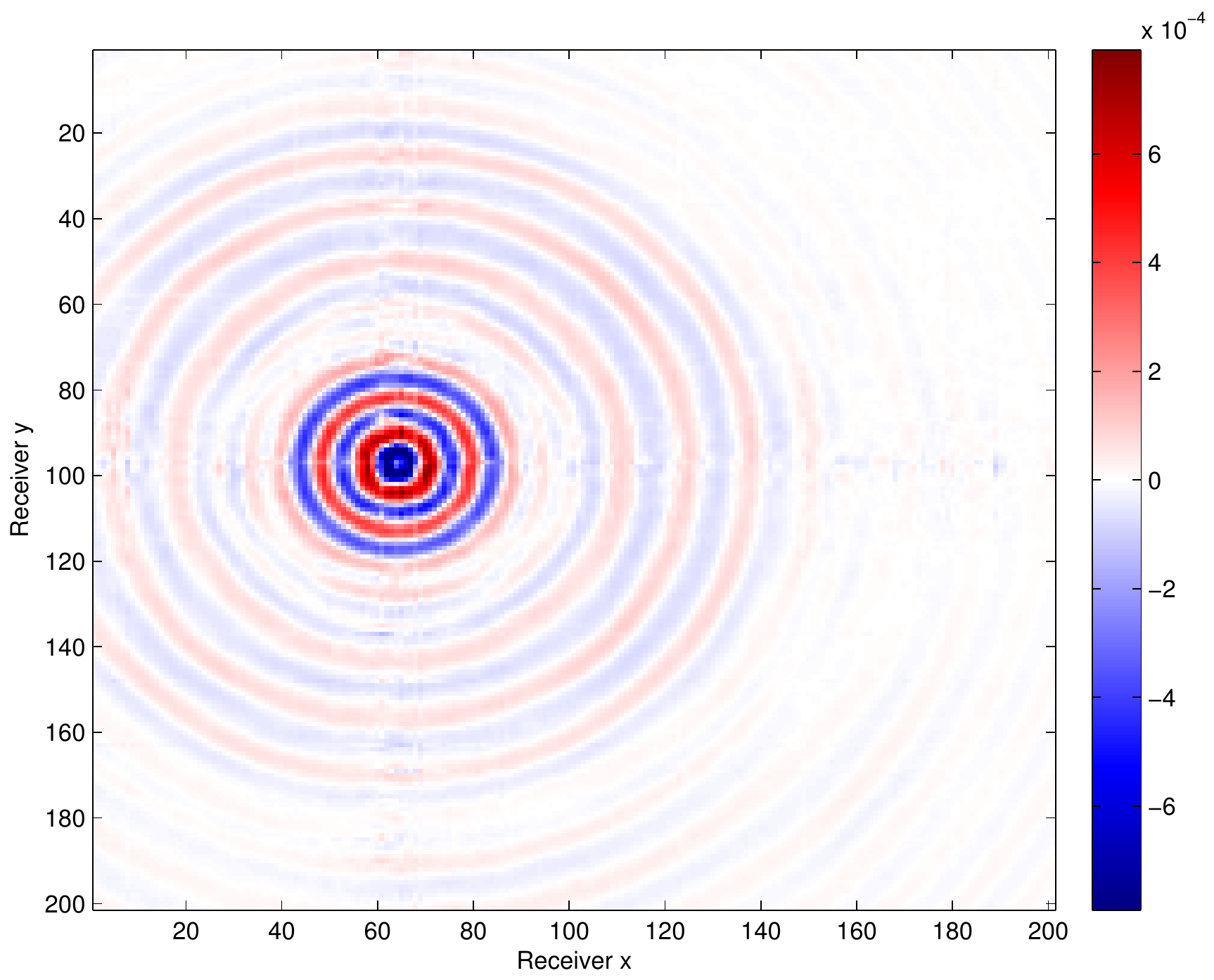}
		\label{fig:regrecovery-reg}
	}
\caption{Regularization reduces some of the spurious artifacts and
  reduces overfitting in the case where there is very little
  data. 4.86 Hz data, 90\% missing receivers. }
\label{fig:regularization}
\end{figure}

%
%
\appendix
\section{Adjoint multilinear operators}
\label{app:adjops}
To derive expressions for the adjoint derivatives, we first consider
the general multilinear product
\begin{align*}
  A_1 \times_1 A_2 \times_2 \dots A_d \times_d \tensor{B} \in \mathbb{R}^{m_1 \times ... \times m_d}
\end{align*}
with $A_i \in \mathbb{R}^{m_i \times n_i}$ and $\tensor{B} \in
\fullspace$. Now, let $P_i$ be the linear
operator that fixes each $A_j$ for $j\neq i$ in the above expression,
i.e.,
\begin{align*}
  P_{i} & : \mathbb{R}^{m_i \times n_i} \to \mathbb{R}^{m_1 \times ... \times m_d} \\
  P_{i}(C) & := A_1 \times_1 A_2 \times_2 \dots A_{i-1} \times_{i-1} C \times_i A_{i+1} \times_{i+1} \dots A_d \times_d \tensor{B }.
\end{align*}
In matricized form, this operator can be written as
\begin{align*}
(P_{i}(C))^{([d] \setminus i)} = C B^{(i)}(A^T_d \otimes A^T_{d-1} \otimes \dots \otimes A^T_{i+1} \otimes A^T_{i-1} \otimes \dots A_1^T) 
\end{align*}

Taking the inner product of the matrix $(P_{i}(C)^{([d] \setminus i)}$ and a tensor $\tensor{Y}$ matricized along the $i$th mode yields
\begin{align*}
\langle P_{i}(C)^{[d]\setminus i}, Y^{(i)} \rangle &= \text{tr}( C B^{(i)}(A^T_d \otimes A^T_{d-1} \otimes \dots \otimes A^T_{i+1} \otimes A^T_{i-1} \otimes \dots A_1^T) (Y^{(i)})^T ) \\
&= \langle C, Z \rangle
\end{align*}
where $Z = Y^{(i)}(A_d \otimes A_{d-1} \otimes \dots \otimes A_{i+1} \otimes A_{i-1} \otimes \dots A_1) (B^{(i)})^T$.

We note that $(A_d \otimes A_{d-1} \otimes \dots \otimes A_{i+1} \otimes A_{i-1} \otimes \dots A_1) (B^{(i)})^T$ is the matricized form of 
\begin{align*}
\tensor{W} = A_1^T \times_1 A_2^T \times_2 \dots A_{i-1}^T \times_{i-1}
  I_{m_i} \times_{i} A_{i+1}^T \times_{i+1} \dots A_d^T \times_d \tensor{B}
\end{align*}
along the modes $[d]\setminus i$, and that $Y^{(i)} W^{([d]\setminus i)}$ is the tensor contraction $\langle \tensor{Y}, \tensor{W} \rangle_{([d]\setminus i), ([d]\setminus i)}$. It follows that the adjoint of the operator $P_{i}(C)$ in the standard Euclidean inner product is given by
\begin{align*}
  P_{i}^*(\tensor{Y}) =\langle A_1^T \times_1 A_2^T \times_2 \dots A_{i-1}^T \times_{i-1}
  I_{m_i} \times_{i} A_{i+1}^T \times_{i+1} \dots A_d^T \times_d \tensor{Y} , \tensor{B}
  \rangle_{([d]\setminus i),([d]\setminus i)}.
\end{align*}
Likewise, for the linear operator
\begin{align*}
  P_{\tensor{B}}(\tensor{C}) := A_1 \times_1 \dots A_d \times_d \tensor{C} 
\end{align*}
we find that its adjoint is given by
\begin{align*} 
  P^*_{\tensor{B}}(\tensor{Y}) = A_1^T \times_1 \dots A_d^T \times_d 
  \tensor{Y}. 
\end{align*}

\section{Proof of \Cref{prop:retraction}}
\label{proof:qrretraction}
\begin{proof}
It is easy to see that the first point in Definition \ref{def:retraction} is satisfied, since for $X \in \text{St}(n,p)$, $\qf(X) = X$

Let $x = (U_t, \tensor{B}_t) \in \paramspaceclean$ and $\eta = (\delta U_t, \tensor{\delta B_t}) \in \mathcal{T}_x \paramspaceclean$. To avoid notational overload, we use the slight abuse of notation that $B_t := B_t^{(1,2)}$ for $t \neq \troot$.

Let $s \in [0,t) \mapsto x(s)$ be a curve in the parameter space $\paramspaceclean$ with $x(0) = x$ and $x'(0) = \eta$ and \\
$x(s) = (U_t(s), B_t(s))$ and $x'(s) = (\delta U_t(s), \delta B_t(s))$.

Then we have that, in Kronecker form,
\begin{align*}
DR_x(0_x)[\eta] = \begin{cases}
\frac{d}{ds}\qf(x(s)_{t}) \big|_{s=0} & \text{if } t \in L \\
\frac{d}{ds}\qf((R_{t_r}(s) \otimes R_{t_l}(s))(x(s)_t) \big|_{s=0} & \text{if } t \not\in \troot \cup L\\
\frac{d}{ds}(R_{t_r}(s) \otimes R_{t_l}(s))(x(s)_t) \big|_{s=0} & \text{if } t = \troot
\end{cases} 
\end{align*}

The fact that $DR_x(0_x)[\eta]_{t} = \delta U_t$ for $t \in L$ follows from Example 8.1.5 in \cite{optmatrixmanifold}.

To compute $DR_x(0_x)[\eta]_{t}$ for $t \not\in L \cup \troot$, we first note the formula from \cite{optmatrixmanifold}
\begin{align}
\label{eq:Dqf}
D\qf(Y)[U] = \qf(Y)\rho_{\text{skew}}(\qf(Y)^T U(\qf(Y)^TY)^{-1}) + (I - \qf(Y) \qf(Y)^T) U(\qf(Y)^TY)^{-1}
\end{align}

where $Y \in \mathbb{R}^{n\times k}_*$, $U \in T_Y \mathbb{R}_*^{n\times k} \simeq \mathbb{R}^{n \times k}$ and $\text{qf}(Y)$ is the Q-factor of the QR-decomposition of $Y$.

Therefore, if we set $Z(s) = (R_{t_r}(s) \otimes R_{t_l}(s))(x(s)_t)$, where $R_t(s)$ is the $R$-factor of the QR-decomposition of the matrix associated to node $t$, we have
\begin{align*}
Z'(0) = [(R'_{t_r}(0) \otimes I_{k_l}) + (I_{k_r} \otimes R'_{t_l}(0)]B_t + \delta B_t
\end{align*}

As a result of the discussion in Example 8.1.5 in \cite{optmatrixmanifold}, since $R_t(0) = I_{k_t}$ we have that
\begin{align*}
R'_t(0) = 
\begin{cases}
\rho_{UT}(U_t^T \delta U_t ) & \text{ for } t \in L \\
\rho_{UT}(B_t^T \delta B_t )  & \text{ for } t \not\in L \cup \troot
\end{cases}
\end{align*}
where $\rho_{UT}(A)$ is the projection onto the upper triangular term of the unique decomposition of a matrix into the sum of a skew-symmetric term and an upper triangular term.

Since $U_t \in \text{St}(n_t, k_t)$ and $B_t \in \text{St}(k_{t_l}k_{t_r},k_t)$, in light of the fact that for $X \in St(n,k)$,
\begin{align*}
T_X St(n,k) = \{ X \Omega + X^{\perp} K : \Omega = - \Omega^T \},
\end{align*}
then $X^T \delta X$ is skew symmetric, for any tangent vector $\delta X$, which implies that  $\rho_{UT}(X^T \delta X)$ is zero.

It follows that $R'_t(0) = 0$ for all $t \in T \setminus \troot$, and therefore
\begin{align*}
Z'(0) = \delta B_t
\end{align*}
from which we immediately obtain 
\begin{align*}
DR_x(0_x)[\eta]_t = \delta B_t \quad \text{for } t \not\in L \cup \troot
\end{align*}

A similar approach holds when $t = \troot$, and therefore, $R_x(\eta)$ is a retraction on $\paramspaceclean$.
\end{proof}

\section{Square-root based retraction}
\label{app:sqrt}
Another straightforward projection onto the orthonormal parameter
space is immediate from the remark that for a general full-rank $n
\times p$ matrix $X$, with $n > p$, the matrix $X (X^T X)^{-1/2}$ is
an orthonormal basis for the column space of $X$. In
\Cref{alg:sqrtretraction}, we only need to compute the eigenvalue
decomposition of a $k_t \times k_t$ matrix, which may be done more
efficiently than computing the QR-factorization of a $k_{t_l} k_{t_r}
\times k_t$ matrix in some instances. 

\begin{algorithm}[H]
\caption{Square-root-based orthogonalization}
\label{alg:sqrtretraction}
\begin{algorithmic}
\REQUIRE $x = (U_t, \tensor{B}_t)$ unorthogonalized
\FOR{ $t \in L$ }
\STATE $M_t = U_t^T U_t$
\STATE Compute the eigenvalue decomposition of $M_t$, $M_t = V_t D_t V_t^T$
\STATE  $U'_t \gets U_t M_t^{-1/2} $
\ENDFOR
\FOR{ $t \in T \setminus L$, visiting children before their parents }
\STATE $C_t  \gets (M_{t_l}^{1/2} \times_1 M_{t_r}^{1/2} \times_2 \tensor{B}_t)^{(1,2)} $
\IF{$t = \troot$}
\STATE $B'_t \gets (C_t)_{(1,2)}$
\ELSE
\STATE Compute the eigenvalue decomposition of $C_t^T C_t$, $C_t^T C_t = V_t D_t V_t^T$
\STATE $B'_t \gets (C_t (C_t^TC_t)^{-1/2})_{(1,2)}$
\ENDIF
\ENDFOR
\RETURN $x' = (U'_t,B'_t)$ in OHT
\end{algorithmic}
\end{algorithm}

\bibliographystyle{plainnat}
\bibliography{Paper}

\begin{thebibliography}{42}
\providecommand{\natexlab}[1]{#1}
\providecommand{\url}[1]{\texttt{#1}}
\expandafter\ifx\csname urlstyle\endcsname\relax
  \providecommand{\doi}[1]{doi: #1}\else
  \providecommand{\doi}{doi: \begingroup \urlstyle{rm}\Url}\fi

\bibitem[Absil et~al.(2008)Absil, Mahony, and Sepulchre]{optmatrixmanifold}
P.A. Absil, R.~Mahony, and R.~Sepulchre.
\newblock \emph{Optimization algorithms on matrix manifolds}.
\newblock Princeton Univ Press, 2008.

\bibitem[Acar et~al.(2011)Acar, Dunlavy, and Kolda]{prevwork:TTB_CPOPT}
Evrim Acar, Daniel~M Dunlavy, and Tamara~G Kolda.
\newblock A scalable optimization approach for fitting canonical tensor
  decompositions.
\newblock \emph{Journal of Chemometrics}, 25\penalty0 (2):\penalty0 67--86,
  2011.

\bibitem[Bader et~al.(2012)Bader, Kolda, et~al.]{prevwork:tensorToolbox}
B.~W. Bader, T.~G. Kolda, et~al.
\newblock Matlab tensor toolbox version 2.5.
\newblock \url{http://www.sandia.gov/~tgkolda/TensorToolbox/}, January 2012.

\bibitem[Ballani et~al.(2011)Ballani, Grasedyck, and
  Kluge]{blackboxapprox-htuck}
J.~Ballani, L.~Grasedyck, and M.~Kluge.
\newblock Black box approximation of tensors in hierarchical tucker format.
\newblock \emph{Linear Algebra and its Applications}, 2011.

\bibitem[Ballani and Grasedyck(2013)]{ballani2013tree}
Jonas Ballani and Lars Grasedyck.
\newblock Tree adaptive approximation in the hierarchical tensor format.
\newblock \emph{Preprint}, 141, 2013.

\bibitem[Cagliari et~al.(2012)Cagliari, Di~Fabio, and Landi]{quotmetric}
Francesca Cagliari, Barbara Di~Fabio, and Claudia Landi.
\newblock The natural pseudo-distance as a quotient pseudo-metric, and
  applications.
\newblock \emph{AMS Acta, Universita di Bologna}, 3499, 2012.

\bibitem[Cai et~al.(2010)Cai, Cand{\`e}s, and Shen]{svtmc}
Jian-Feng Cai, Emmanuel~J Cand{\`e}s, and Zuowei Shen.
\newblock A singular value thresholding algorithm for matrix completion.
\newblock \emph{SIAM Journal on Optimization}, 20\penalty0 (4):\penalty0
  1956--1982, 2010.

\bibitem[Cand{\`e}s and Recht(2009)]{candes2009exact}
E.J. Cand{\`e}s and B.~Recht.
\newblock Exact matrix completion via convex optimization.
\newblock \emph{Foundations of Computational mathematics}, 9\penalty0
  (6):\penalty0 717--772, 2009.

\bibitem[Cand{\`e}s and Tao(2010)]{candes2010power}
Emmanuel~J Cand{\`e}s and Terence Tao.
\newblock The power of convex relaxation: Near-optimal matrix completion.
\newblock \emph{Information Theory, IEEE Transactions on}, 56\penalty0
  (5):\penalty0 2053--2080, 2010.

\bibitem[Carroll and Chang(1970)]{prevwork:originalCP}
J.~Carroll and J.~Chang.
\newblock Analysis of individual differences in multidimensional scaling via an
  n-way generalization of ``eckart-young'' decomposition.
\newblock \emph{Psychometrika}, 35:\penalty0 283--319, 1970.
\newblock ISSN 0033-3123.
\newblock URL \url{http://dx.doi.org/10.1007/BF02310791}.
\newblock 10.1007/BF02310791.

\bibitem[{Da Silva} and Herrmann(2013)]{DaSi1307:Hierarchical}
Curt {Da Silva} and Felix~J. Herrmann.
\newblock Hierarchical tucker tensor optimization - applications to tensor
  completion.
\newblock In \emph{10th international conference on Sampling Theory and
  Applications (SampTA 2013)}, pages 384--387, Bremen, Germany, July 2013.

\bibitem[De~Lathauwer et~al.(2000)De~Lathauwer, De~Moor, and
  Vandewalle]{prevwork:multilinearsvd}
L.~De~Lathauwer, B.~De~Moor, and J.~Vandewalle.
\newblock A multilinear singular value decomposition.
\newblock \emph{SIAM Journal on Matrix Analysis and Applications}, 21\penalty0
  (4):\penalty0 1253--1278, 2000.

\bibitem[De~Silva and Lim(2008)]{prevwork:cp-approx-illposed}
V.~De~Silva and L.H. Lim.
\newblock Tensor rank and the ill-posedness of the best low-rank approximation
  problem.
\newblock \emph{SIAM Journal on Matrix Analysis and Applications}, 30\penalty0
  (3):\penalty0 1084--1127, 2008.

\bibitem[Demanet(2006)]{Demanet:2006td}
Laurent Demanet.
\newblock \emph{Curvelets, Wave Atoms, and Wave Equations}.
\newblock PhD thesis, California Institute of Technology, 2006.

\bibitem[E.~van~den Berg()]{spottoolbox}
M.~P.~Friedlander E.~van~den Berg.
\newblock Spot -- a linear-operator toolbox.
\newblock URL \url{http://www.cs.ubc.ca/labs/scl/spot/}.

\bibitem[Eld{\'e}n and Savas(2009)]{elden2009newton}
Lars Eld{\'e}n and Berkant Savas.
\newblock A newton-grassmann method for computing the best multilinear
  rank-(r\_1, r\_2, r\_3) approximation of a tensor.
\newblock \emph{SIAM Journal on Matrix Analysis and applications}, 31\penalty0
  (2):\penalty0 248--271, 2009.

\bibitem[Gandy et~al.(2011)Gandy, Recht, and Yamada]{prevwork:tuckercompletion}
S.~Gandy, B.~Recht, and I.~Yamada.
\newblock Tensor completion and low-n-rank tensor recovery via convex
  optimization.
\newblock \emph{Inverse Problems}, 27\penalty0 (2):\penalty0 025010, 2011.

\bibitem[Grasedyck(2010)]{htuckersvd}
L.~Grasedyck.
\newblock Hierarchical singular value decomposition of tensors.
\newblock \emph{SIAM Journal on Matrix Analysis and Applications}, 31\penalty0
  (4):\penalty0 2029--2054, 2010.

\bibitem[Grasedyck et~al.(2013{\natexlab{a}})Grasedyck, Kluge, and
  Kr{\"a}mer]{prevwork:ttals}
Lars Grasedyck, Melanie Kluge, and Sebastian Kr{\"a}mer.
\newblock Alternating directions fitting (adf) of hierarchical low rank
  tensors.
\newblock \emph{Preprint}, 149, 2013{\natexlab{a}}.

\bibitem[Grasedyck et~al.(2013{\natexlab{b}})Grasedyck, Kressner, and
  Tobler]{prevwork:tensorsurvey2}
Lars Grasedyck, Daniel Kressner, and Christine Tobler.
\newblock A literature survey of low-rank tensor approximation techniques.
\newblock \emph{GAMM-Mitteilungen}, 36\penalty0 (1):\penalty0 53--78,
  2013{\natexlab{b}}.

\bibitem[Hackbusch and K{\"u}hn(2009)]{htuckinitial}
W.~Hackbusch and S.~K{\"u}hn.
\newblock A new scheme for the tensor representation.
\newblock \emph{Journal of Fourier Analysis and Applications}, 15\penalty0
  (5):\penalty0 706--722, 2009.

\bibitem[Harshman(1970)]{prevwork:originalCP2}
R.A. Harshman.
\newblock Foundations of the parafac procedure: models and conditions for an"
  explanatory" multimodal factor analysis.
\newblock 1970.

\bibitem[Holtz et~al.(2012{\natexlab{a}})Holtz, Rohwedder, and
  Schneider]{prevwork:altttorig}
Sebastian Holtz, Thorsten Rohwedder, and Reinhold Schneider.
\newblock The alternating linear scheme for tensor optimization in the tensor
  train format.
\newblock \emph{SIAM Journal on Scientific Computing}, 34\penalty0
  (2):\penalty0 A683--A713, 2012{\natexlab{a}}.

\bibitem[Holtz et~al.(2012{\natexlab{b}})Holtz, Rohwedder, and
  Schneider]{ttmanifolds}
Sebastian Holtz, Thorsten Rohwedder, and Reinhold Schneider.
\newblock On manifolds of tensors of fixed tt-rank.
\newblock \emph{Numerische Mathematik}, 120\penalty0 (4):\penalty0 701--731,
  2012{\natexlab{b}}.

\bibitem[Huang et~al.(2014)Huang, Mu, Goldfarb, and Wright]{huang2014provable}
Bo~Huang, Cun Mu, Donald Goldfarb, and John Wright.
\newblock Provable low-rank tensor recovery.
\newblock 2014.

\bibitem[Khoromskij(2011)]{prevwork:structuredtensoroverview}
B.N. Khoromskij.
\newblock Tensor-structured numerical methods in scientific computing: Survey
  on recent advances.
\newblock \emph{Chemometrics and Intelligent Laboratory Systems}, 2011.

\bibitem[Kolda and Bader(2009{\natexlab{a}})]{prevwork:tensorsurvey}
Tamara~G Kolda and Brett~W Bader.
\newblock Tensor decompositions and applications.
\newblock \emph{SIAM review}, 51\penalty0 (3):\penalty0 455--500,
  2009{\natexlab{a}}.

\bibitem[Kolda and Bader(2009{\natexlab{b}})]{prevwork:tensordecompositions}
T.G. Kolda and B.W. Bader.
\newblock Tensor decompositions and applications.
\newblock \emph{SIAM review}, 51\penalty0 (3):\penalty0 455--500,
  2009{\natexlab{b}}.

\bibitem[Kreimer and Sacchi(2012{\natexlab{a}})]{prevwork:kreimer2012tensor}
N.~Kreimer and M.D. Sacchi.
\newblock Tensor completion via nuclear norm minimization for 5d seismic data
  reconstruction.
\newblock In \emph{SEG Technical Program Expanded Abstracts 2012}, pages 1--5.
  Society of Exploration Geophysicists, 2012{\natexlab{a}}.

\bibitem[Kreimer and
  Sacchi(2012{\natexlab{b}})]{prevwork:kreimer2012tensordnoise}
Nadia Kreimer and Mauricio~D Sacchi.
\newblock A tensor higher-order singular value decomposition for prestack
  seismic data noise reduction and interpolation.
\newblock \emph{Geophysics}, 77\penalty0 (3):\penalty0 V113--V122,
  2012{\natexlab{b}}.

\bibitem[Kressner and Tobler(2013)]{htucktoolbox}
D.~Kressner and C.~Tobler.
\newblock htucker - a matlab toolbox for tensors in hierarchical tucker format.
\newblock \url{http://sma.epfl.ch/~anchpcommon/publications/htucker.pdf}, 2013.

\bibitem[Kressner et~al.(2013)Kressner, Steinlechner, and
  Vandereycken]{tuckercompletion}
D.~Kressner, M.~Steinlechner, and B.~Vandereycken.
\newblock Low-rank tensor completion by riemannian optimization.
\newblock 2013.

\bibitem[Lubich et~al.(2013)Lubich, Rohwedder, Schneider, and
  Vandereycken]{Lubich:2013ku}
Christian Lubich, Thorsten Rohwedder, Reinhold Schneider, and Bart
  Vandereycken.
\newblock {Dynamical Approximation by Hierarchical Tucker and Tensor-Train
  Tensors}.
\newblock \emph{SIAM Journal on Matrix Analysis and Applications}, 34\penalty0
  (2):\penalty0 470--494, April 2013.

\bibitem[Mishra and Sepulchre(2013)]{prevwork:subspacemc}
B.~Mishra and R.~Sepulchre.
\newblock R3mc: A riemannian three-factor algorithm for low-rank matrix
  completion.
\newblock \emph{arXiv.org}, June 2013.

\bibitem[Mu et~al.(2013)Mu, Huang, Wright, and Goldfarb]{mu2013square}
Cun Mu, Bo~Huang, John Wright, and Donald Goldfarb.
\newblock Square deal: Lower bounds and improved relaxations for tensor
  recovery.
\newblock \emph{arXiv preprint arXiv:1307.5870}, 2013.

\bibitem[Oymak et~al.(2012)Oymak, Jalali, Fazel, Eldar, and
  Hassibi]{oymak2012simultaneously}
Samet Oymak, Amin Jalali, Maryam Fazel, Yonina~C Eldar, and Babak Hassibi.
\newblock Simultaneously structured models with application to sparse and
  low-rank matrices.
\newblock \emph{arXiv preprint arXiv:1212.3753}, 2012.

\bibitem[Shi and Shen(2005)]{Shi:2005gz}
Z~J Shi and J~Shen.
\newblock {New Inexact Line Search Method for Unconstrained Optimization}.
\newblock \emph{Journal of Optimization Theory and Applications}, 127\penalty0
  (2):\penalty0 425--446, November 2005.

\bibitem[Shi(2004)]{shi2004convergence}
Zhen-Jun Shi.
\newblock Convergence of line search methods for unconstrained optimization.
\newblock \emph{Applied Mathematics and Computation}, 157\penalty0
  (2):\penalty0 393--405, 2004.

\bibitem[Signoretto et~al.(2011)Signoretto, Van~de Plas, De~Moor, and
  Suykens]{tensorvsmatrix}
M.~Signoretto, R.~Van~de Plas, B.~De~Moor, and J.~AK Suykens.
\newblock Tensor versus matrix completion: a comparison with application to
  spectral data.
\newblock \emph{Signal Processing Letters, IEEE}, 18\penalty0 (7):\penalty0
  403--406, 2011.

\bibitem[Tobler(2012)]{htucktoolboxthesis}
C.~Tobler.
\newblock \emph{Low Rank Tensor Methods for Linear Systems and Eigenvalue
  Problems}.
\newblock PhD thesis, ETH Z{\"u}rich, 2012.

\bibitem[Uschmajew(2012)]{prevwork:cp-als-convergence}
A.~Uschmajew.
\newblock Local convergence of the alternating least squares algorithm for
  canonical tensor approximation.
\newblock \emph{SIAM Journal on Matrix Analysis and Applications}, 33\penalty0
  (2):\penalty0 639--652, 2012.

\bibitem[Uschmajew and Vandereycken(2013)]{htuckgeom}
A.~Uschmajew and B.~Vandereycken.
\newblock The geometry of algorithms using hierarchical tensors.
\newblock \emph{Linear Algebra and its Applications}, 439\penalty0
  (1):\penalty0 133--166, July 2013.

\end{thebibliography}

\end{document}